\numberwithin{equation}{section}
\newtheorem{theorem}{Theorem}[section]
\newtheorem{proposition}[theorem]{Proposition}
\newtheorem{lemma}[theorem]{Lemma}
\newtheorem{corollary}[theorem]{Corollary}
\theoremstyle{definition}
\newtheorem{definition}[theorem]{Definition}
\newtheorem{defn}[theorem]{Definition}
\newtheorem{example}[theorem]{Example}
\theoremstyle{remark}
\newtheorem{remark}[theorem]{Remark}
\numberwithin{equation}{section}
\newcommand{\N}{\mathbb{N}}
\newcommand{\R}{\mathbb{R}}
\newcommand{\M}{\mathcal{M}}
\newcommand{\Sp}{\mathcal{S}\mathit{p}}
\renewcommand{\Top}{\mathcal{T}\mathit{op}}
\newcommand{\Coll}{\mathcal{C}\mathit{oll}}
\newcommand{\Oper}{\mathcal{O}\mathit{per}}
\newcommand{\Alg}{\mathcal{A}\mathit{lg}}
\newcommand{\sG}{\mathcal{G}}
\newcommand{\sF}{\mathcal{F}}
\newcommand{\sP}{\mathcal{P}}
\newcommand{\sT}{\mathcal{T}}
\newcommand{\sH}{\mathcal{H}}
\newcommand{\sN}{\mathcal{N}}
\newcommand{\Com}{\mathscr{C}\!\mathit{om}}
\newcommand{\ulu}{\underline{u}}
\newcommand{\ulv}{\underline{v}}
\renewcommand{\emptyset}{\varnothing}
\renewcommand{\tilde}[1]{\widetilde{#1}}
\DeclareMathOperator{\colim}{colim}
\DeclareMathOperator{\Sym}{Sym}
\DeclareMathOperator{\Sing}{Sing}
\DeclareMathOperator{\Aut}{Aut}
\DeclareMathOperator{\res}{res}
\DeclareMathOperator{\Ind}{Ind}
\renewcommand{\hom}{\operatorname{Hom}}
\DeclareMathOperator{\ho}{Ho}
\newcommand{\po}{\ar@{}[dr]|(.7){\Searrow}}
\DeclareMathOperator{\Map}{Map}
\DeclareMathOperator{\ev}{Ev}
\newcommand{\cat}[1]{\mathcal{#1}}
\newcommand{\boxprod}{\mathbin\square}
\begin{document}

\title{Encoding equivariant commutativity via operads}

%\date{\today}
\author[J.J. Guti\'{e}rrez]{Javier J. Guti\'{e}rrez}
\address{Departament de Matem\`atiques i Inform\`atica, Facultat de
Matem\`atiques i Inform\`atica, Universitat de Bar\-ce\-lo\-na, Gran Via de les Corts Catalanes 585, 08007 Barcelona, Spain}
\email{javier.gutierrez@ub.edu}
\urladdr{http://www.ub.edu/topologia/gutierrez}

\author[D. White]{David White}
\address{Department of Mathematics, Denison University, Granville, OH 43023, USA}
\email{david.white@denison.edu} \urladdr{http://personal.denison.edu/~whiteda}
\keywords{model category, homotopy category, equivariant homotopy theory,
equivariant spectra, operads}

\subjclass{55U35, 18G55}

\begin{abstract}
In this paper, we prove a conjecture of Blumberg and Hill regarding the existence of $N_\infty$-operads associated to given sequences $\sF = (\sF_n)_{n \in \N}$ of families of subgroups of $G\times \Sigma_n$. For every such sequence, we construct a model structure on the category of $G$-operads, and we use these model structures to define $E_\infty^\sF$-operads, generalizing the notion of an $N_\infty$-operad, and to prove the Blumberg-Hill conjecture. We then explore questions of admissibility, rectification, and preservation under left Bousfield localization for these $E_\infty^\sF$-operads, obtaining some new results as well for $N_\infty$-operads.

\end{abstract}

\maketitle

\section{Introduction}

The work of Hill, Hopkins, and Ravenel on the Kervaire invariant one problem in \cite{kervaire} conclusively demonstrates the value of equivariant spectra to modern stable homotopy theory, and in particular of equivariant commutative ring spectra. The computations used in \cite{kervaire} rely on the slice spectral sequence and the existence of multiplicative norm functors on the category of equivariant commutative ring spectra. For a compact Lie group $G$, a genuinely commutative ring $G$-spectrum has multiplicative norm functors parameterized by closed subgroups $H < G$, by natural numbers $n$, and by homomorphisms $\rho: H \to \Sigma_n$ to the symmetric group on $n$ letters.

Blumberg and Hill \cite{blumberg-hill} introduced $N_\infty$-operads to encode equivariant algebraic structure, including multiplicative norm maps. These operads interpolate between the $E_\infty$-operad in spaces (which encodes no multiplicative norms) and $E_\infty$-$G$-operads (which encode all possible norms). An $N_\infty$-operad is a $G$-operad $P$ such that $P(0)$ is $G$-contractible, the action of $\Sigma_n$ on $P(n)$ is free, and $P(n)$ is the universal space for a family $\sN_n(P)$ of subgroups of $G\times \Sigma_n$ containing all subgroups of the form $H\times 1$. The condition that $\Sigma_n$ acts freely on $P(n)$ implies that, for every $\Gamma \in \sN_n(P)$, $\Gamma \cap (1\times \Sigma_n) \cong 1 \times 1$.

In this paper, we generalize the notion of an $N_\infty$-operad to what we call $E_\infty^\sF$-operads, where $\sF = (\sF_n)$ is a sequence of families of subgroups of $G\times \Sigma_n$, not necessarily satisfying the requirement of the families $\sN_n(P)$ above. For each sequence $\sF$, we construct a model structures on the category of $G$-operads, and we use these model structures to construct our $E_\infty^\sF$-operads. Using these model structures, we then prove a conjecture of Blumberg and Hill regarding the existence of $N_\infty$-operads. We also work out questions of admissibility, rectification, and strictification for these $E_\infty^\sF$-operads, generalizing results from \cite{blumberg-hill}. Throughout the paper, $G$ is taken to be a compact Lie group, except in Sections \ref{sec:admissibility} and \ref{sec:rectification}, where we restrict to finite groups $G$ in order to use the techniques from \cite{kervaire} to transfer model structures to categories of algebras over $N_\infty$-operads.

Blumberg and Hill give several examples of $N_\infty$-operads, but do not prove that, for every sequence $\sN = (\sN_n)_{n\in \N}$ of families of subgroups, there is an associated $N_\infty$-operad. Indeed, this is not true in general, because the operad composition maps place certain restrictions on the families. Blumberg and Hill conjectured that these restrictions are the only obstacle to the existence of an $N_\infty$-operad associated to a given sequence $\sN$. The main result of this paper, is a proof of this conjecture (see Section \ref{sec:cof-rep-operads}), by identifying the precise relationships between the $\sN_n$ in order for an $N_\infty$-operad $P$, with $P(n)$ a universal space for the family $\sN_n$, to exist. In a 2017 preprint \cite{rubin}, Rubin also verifies this conjecture using different methods (related to indexing systems). Bonventre and Pereira \cite{bonventre-pereira} have an alternative approach, based on equivariant trees.

After a review of model categories, operads, and equivariant operads in Section~\ref{sec:prelim}, we develop $\sF$-fixed point model structures on the category of $G$-operads in
Section~\ref{sec:model-operads}, and then we realize each operad
$E_\infty^\sF$ as a cofibrant replacement for the commutative operad $\Com$
in the $\sF$-fixed point model structure on $G$-operads.
In Section \ref{sec:cof-rep-operads}, we introduce the notion of a \emph{realizable} sequence $\sF$ as a sequence satisfying a condition relating the families $\sF_n$ for different $n$. We then resolve the Blumberg-Hill conjecture, by proving that this condition is equivalent to the existence of an $N_\infty$-operad $P$ whose spaces $P(n)$ are universal spaces for the families $\sF_n$.

In Section \ref{sec:admissibility}, we prove that, for certain sequences $\sF$, there is a transferred model structure on algebras over an $E_\infty^\sF$-operad (in $G$-spaces and, for finite $G$, in $G$-spectra). In Section \ref{sec:rectification}, we address the question of rectification between algebras over different $E_\infty^\sF$-operads, and we prove that in an appropriate model structure on $G$-spectra there is a Quillen equivalence between certain $E_\infty^\sF$-algebras and strictly commutative ring spectra. In Section \ref{sec:localization} we discuss two examples that demonstrates that Bousfield localization can reduce $E_\infty^\sF$-structures to $E_\infty$-structures (i.e., with no multiplicative norm maps) or even less structure. We then characterize the localizations that preserve $E_\infty^\sF$-structures, and in doing so generalize the example to demonstrate localizations which reduce structure to $E_\infty^\sF$ for all $\sF$. In Appendix \ref{appendix}, we verify the model categorical conditions required so that left Bousfield localizations of $G$-spectra exist.

\bigskip

\textbf{Acknowledgments:} A draft of this paper has been circulating in various forms since 2013. In the summer of 2013, we benefited from an extensive correspondence with Andrew Blumberg and Mike Hill, wherein they kindly explained their program regarding $N_\infty$-operads, and made several helpful observations related to our operads $E_\infty^\sF$. In 2014, we also benefited from correspondence with Justin Noel, which improved Section \ref{sec:localization}.
We are also grateful to John Greenlees for catching a mistake in a 2014 version of this document, and giving us the hint that led us to the notion of a realizable sequence and Theorem \ref{thm:N-infty-operads-exist} in 2015. We thank Doug Ravenel for suggesting we work in the positive complete model structure on $G$-spectra, and for asking the questions that led us to write down the proof of Proposition \ref{prop:G-spectra-cellular}. We thank Jonathan Rubin and Peter Bonventre for (independently) asking in 2016 if Theorem \ref{thm:N-infty-operads-exist} could be made an ``if and only if.'' We would also like to thank Peter May for consistently encouraging the second author to finish this paper. Finally, we thank the IMUB for supporting visits of the second author to Barcelona in 2013 and 2017, Ieke Moerdijk and Radboud University for supporting the second author to visit Nijmegen in 2013 and 2015, and Carles Casacuberta for facilitating the start of our collaboration in 2013. This draft was improved by several helpful comments from an anonymous referee, from Jonathan Rubin, and from Benjamin B\"{o}hme.

\section{Preliminaries} \label{sec:prelim}

In this section we give a review of model categories, operads, and
equivariant operads.
\subsection{Model Categories}

We will assume the reader is familiar with the language of model structures.
Excellent treatments are given in \cite{hirschhorn} and \cite{hovey-book} .

All of our model categories $\M$ will be \emph{cofibrantly generated}, that is,
there exist a set $I$ of generating cofibrations and a set $J$ of generating
trivial cofibrations that one can use to perform the small object argument
(see \cite[Definition 11.1.2]{hirschhorn} or \cite[Definition~2.1.17]{hovey-book} for a precise definition). An object $X$ is \emph{small} relative to a class of maps $K$, if there is a large enough cardinal $\kappa$ such that the functor $\M(X,-)$ commutes with $\kappa$-filtered colimits of maps in $K$. 

If $\M$ is a model category and $X$ is an object in $\M$ then $QX$ will
denote the (functorial) cofibrant replacement of $X$, and $RX$ will denote
the (functorial) fibrant replacement. Thus, there is a canonical trivial
fibration $q_X:QX \to X$ and a canonical trivial cofibration $r_X:X\to RX$.

Recall that a \textit{monoidal model category} $\M$ is a model category with
a monoidal product $\otimes$ and monoidal unit $S$ such that the following
two conditions are satisfied:

\begin{itemize}
\item The \emph{pushout product axiom}: if $f: A\to B$ and $g: X\to Y$ are cofibrations,
    then the pushout product
$$
f \boxprod g\colon A\otimes Y \coprod_{A\otimes X} B\otimes X \longrightarrow
    B\otimes Y
$$
is also a cofibration. Furthermore, $f\boxprod g$ is a trivial
cofibration if either $f$ or $g$ is a trivial cofibration.
\item The \emph{unit axiom}: if $X$ is cofibrant, then the map
$$
q_S\otimes id_X\colon QS\otimes X \longrightarrow S\otimes X\cong X
$$
is a weak equivalence.
\end{itemize}

An additional axiom on a monoidal model category which we will have cause to
consider is the \textit{monoid axiom}, which first appeared in
\cite[Definition 3.3]{SS00} and guarantees that the categories of monoids in
a monoidal model category inherit a model structure with weak equivalences
and fibrations created by the forgetful functor. Recall that given a class of maps $\mathcal{I}$, the \emph{saturated class} generated by $\mathcal{I}$, also denoted by $\mathcal{I}$-cell, is smallest class of maps that contains $\mathcal{I}$ and it is closed under pushouts and transfinite compositions. An object is an \emph{$\mathcal{I}$-cell complex} if the map from the initial object to it is in $\mathcal{I}$-cell.
\begin{itemize}
\item The \emph{monoid axiom}: for all objects $X$, the saturated class
    generated by the maps $X\otimes f$ where $f$ runs through all trivial
    cofibrations is contained in the weak equivalences.
\end{itemize}

\subsection{Spaces}

Let $\Top$ denote the category of compactly generated weak-Hausdorff spaces. This is a monoidal model category~\cite[Proposition 4.2.11]{hovey-book}. Let $G$ be a compact Lie group, and let $\Top^G$ denote the catetory of spaces with a $G$-action, and with equivariant maps. 
When working in the pointed setting, the $G$-action is assumed to fix the distinguished basepoint.

A map $f$ in $\Top^G$ is a weak equivalence (resp. fibration) if the $H$ fixed points $f^H$ is a weak equivalence (resp. fibration) in $\Top$ for all closed subgroups $H\le G$. We need $H$ to be closed so that the quotient topology on $G/H$ is weak Hausdorff. For this reason, whenever we consider subgroups in this paper, we will always assume they are closed. The fixed points functor $(-)^H$ has a left adjoint $G/H\times (-)\colon \Top \to \Top^G$, and the generating (trivial) cofibrations have the form $G/H \times i$ for all subgroups $H\le G$, where $i$ is a generating (trivial) cofibration in $\Top$. The pointed analogue works the same way.

Turning now to monoidal structures, recall that $\Top^G$ is closed symmetric
monoidal with the Cartesian product, where we use the diagonal action of $G$
on $X\times Y$, and the conjugation action of $G$ on the equivariant mapping
space $\Map_G(X,Y)$, that is, $(g\cdot f)(x)=g\cdot f(g^{-1}\cdot x)$. For pointed spaces, the smash product is used. The category $\Top^G$ is enriched, tensored, and cotensored over $\Top$. 
%The left adjoint to the forgetful functor $\Top^G\to \Top$ takes a space to itself, endowed with the trivial $G$-action. 
Recall that $\Top^G$ is a proper, monoidal model category and a \emph{topological model category} in the sense of \cite[Definition~4.2.18]{hovey-book}, meaning that for every cofibration $f: A\to B$ in $\Top$ and every cofibration $g: X\to Y$ in $\Top^G$, the colimit map $f \boxprod g\colon A\otimes Y \coprod_{A\otimes X} B\otimes X \longrightarrow B\otimes Y$ is also a cofibration (where $\otimes$ denotes the tensoring over $\Top$), and is a trivial cofibration if either $f$ or $g$ is a trivial cofibration (this is explained in \cite[Section 2.3]{fausk-equivariant-pro-spectra} and \cite[Theorem IV.6.5]{mandell-may-equivariant}, among other places). Any topological model category is automatically a simplicial model category, by applying the Sing functor to the topological mapping spaces.

There are also variants of this model structure relative to a family of subgroups of $G$. Families of subgroups are crucial to the study of equivariant homotopy theory; they are necessary for the definition of the geometric fixed points functor, they come up several times in constructions of free spectra~\cite{lewis-may-steinberger}, and they are related to the Baum--Connes and Farrell--Jones conjectures. We refer the reader to the excellent survey articles \cite{bohmann-overview} and \cite{luck-survey-classifying-families} for more information on the importance of families. Fixed-point model structures allow for the homotopical study of the information which can be ``seen'' by a
family, and we will exploit this point of view throughout the paper.

For any set $\sF$ of subgroups of $G$ which contains the trivial subgroup, there is a cofibrantly generated model structure on $\Top^G$ in which a map $f$ is a weak equivalence (resp. fibration) if and only if $f^H$ is a weak equivalence (resp. fibration) in $\Top$ for all $H \in \sF$; see for instance~\cite[Section 3]{stephan} for a general approach to equivariant model structures.
We will denote this model structure by $\Top^\sF$. The generating (trivial) cofibrations are
given by the set of maps of the form $G/H \times g$, where $g$ is a generating (trivial) cofibration of topological spaces, and $H\in \sF$. We will assume that $\sF$ contains the trivial subgroup
$\{e\}$ in order to ensure compatibility with the model structure on $\Top$.

If $K$ is conjugate to $H$, then $G/K$ is isomorphic as a $G$-space to $G/H$, so nothing is lost by assuming $\sF$ is closed under conjugation. Without a further hypothesis on $\sF$, however, we will not know this model structure is monoidal, even when $G$ is a finite group. In particular, the pushout product axiom requires $G/H \times G/K$ with the diagonal action to be cofibrant. We will additionally assume that the set $\sF$ is closed under subgroups, in particular, closed under intersections. 

\begin{definition}
A collection $\sF$ of subgroups of $G$ is called a \emph{family} if it is closed under subgroups and conjugation. 
\end{definition}
Note that some other authors require $\sF$ to be closed under subconjugacy, that is, if $H\in \sF$ and $g^{-1}Kg\subset H$ then $K\in \sF$. However, for our purposes---the existence of a monoidal model structure relative to $\sF$ on $G$-spaces and $G$-spectra, and the existence of universal classifying spaces \cite[Section 1.2]{luck-survey-classifying-families}---it is sufficient to assume $\sF$ is closed under subgroups.

In the case where $G$ is a compact Lie group rather than a finite group, additional care must be taken to ensure the model structure on $\Top^{\sF}$ satisfies the pushout product axiom. As observed in~\cite[Lemma 2.9]{fausk-equivariant-pro-spectra}, the key condition to assume on $\sF$ is that it forms an \textit{Illman collection}. This means that $(G/H \times G/K)_+$ is an $\sF I$-cell complex for any $H,K\in \sF$ \cite[Theorem 5.5]{illman}. Justin Noel has pointed out (private correspondence) that any family of closed subgroups of a compact Lie group satisfies Illman's condition, since $G/K$ is a compact $H$-manifold and hence admits an $H$-equivariant triangulation, making it into an $H$-CW complex. It follows that $\Ind_H^G \res_H^G G/K\cong G/H\times G/K$ is a $G$-CW complex.

We summarize these considerations as:

\begin{proposition} \label{prop:family-top}
If $G$ is a finite group and $\sF$ is a family of subgroups of $G$, or if $G$
is a compact Lie group and $\sF$ is an Illman collection, then $\Top^{\sF}$
is a proper, topological, closed symmetric monoidal model category. The weak equivalences
(resp. fibrations) are the maps $f\colon X\to Y$ such that $f^H\colon X^H\to Y^H$ is a weak
equivalence (resp. fibration) in $\Top$, for every $H$ in $\sF$, respectively. The generating (trivial) cofibrations are maps of the form $G/H \times i$ where $i$ is a generating (trivial) cofibration in $\Top$, and $H \in \sF$.
\end{proposition}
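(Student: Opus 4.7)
The plan is to apply Kan's recognition theorem for cofibrantly generated model categories (see \cite[Theorem 11.3.2]{hirschhorn}) with generating sets
\[
I_\sF = \{G/H \times i : H \in \sF,\ i \in I\}, \qquad J_\sF = \{G/H \times j : H \in \sF,\ j \in J\},
\]
where $I$ and $J$ are the standard generating (trivial) cofibrations of $\Top$. Using the adjunctions $(G/H \times -) \dashv (-)^H$, a map $f$ has the right lifting property against $I_\sF$ (respectively $J_\sF$) precisely when $f^H$ has the right lifting property against $I$ (respectively $J$) in $\Top$ for every $H \in \sF$, which identifies the $\sF$-fibrations and trivial fibrations as described. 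Smallness of the domains $G/H \times A$ follows from smallness in $\Top$ together with the fact that $(-)^H$ preserves filtered colimits along closed inclusions. The acyclicity step of the recognition theorem --- that every relative $J_\sF$-cell complex is an $\sF$-equivalence --- reduces, after applying $(-)^H$ and using that fixed points commute with the relevant cellular constructions, to the corresponding statement for $\Top$.

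The main obstacle is the pushout product axiom for the symmetric monoidal structure, together with the analogous axiom over $\Top$ furnishing the topological enrichment. On generators, one computes
\[
(G/H \times i) \boxprod (G/K \times j) \cong (G/H \times G/K) \times (i \boxprod j),
\]
where $G/H \times G/K$ carries the diagonal $G$-action. Since $i \boxprod j$ is a (trivial) cofibration in $\Top$, the left-hand side is a (trivial) cofibration in $\Top^\sF$ as soon as $G/H \times G/K$ is an $I_\sF$-cell complex. When $G$ is finite and $\sF$ is closed under subgroups, the orbit decomposition of $G/H \times G/K$ exhibits it as a disjoint union of orbits of the form $G/(H \cap gKg^{-1})$, all of which lie in $\sF$ by the closure hypothesis; when $G$ is a compact Lie group and $\sF$ is an Illman collection, this cellularity is precisely the defining property of the collection. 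The pushout product of a generating cofibration of $\Top^\sF$ with a cofibration of $\Top$ is handled by the same computation, since regarding the $\Top$ cofibration as a $G$-map with trivial action yields $f \boxprod (G/H \times j) \cong G/H \times (f \boxprod j)$.

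For left and right properness and for the unit axiom, all three statements reduce via fixed points to the corresponding statements in $\Top$. Fixed-point functors preserve pullbacks, so right properness is immediate; and because cofibrations in $\Top^\sF$ have underlying closed inclusions of $G$-spaces, fixed points also preserve the pushouts appearing in the left properness square, so left properness follows from left properness of $\Top$. Finally, the monoidal unit is the one-point $G$-space, which is cofibrant in $\Top^\sF$, so the unit axiom holds trivially.
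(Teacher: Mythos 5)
Your proof is correct and follows exactly the route the paper intends: the paper states the proposition as a summary of the preceding discussion, citing Stephan for the existence of the fixed-point model structure and Illman/Fausk for the pushout-product axiom, and you have simply unpacked those citations — Kan's recognition theorem with the $(G/H\times-)\dashv(-)^H$ adjunctions, the double coset decomposition of $G/H\times G/K$ for finite $G$, and the Illman cellularity hypothesis for compact Lie $G$. One small point worth noting: the monoidal pushout-product verification on generators, namely that $(G/H\times G/K)\times(i\boxprod j)$ is an $I_\sF$-cofibration because $G/H\times G/K$ is an $I_\sF$-cell complex, logically presupposes the topological ($\Top$-enriched) pushout-product axiom, which you establish afterwards; the reader should read those two paragraphs in the opposite order, but there is no circularity.
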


Of course, there is also a pointed analogue for $\Top^G_*$.

\begin{remark}
The category of compactly generated spaces is not a combinatorial model category, because not all spaces are small; see \cite[Section 2.4]{hovey-book}. One could work in a combinatorial model for $\Top^G$ by using Jeff Smith's $\Delta$-generated spaces. Alternatively, one could work with simplicial sets, but then one would need to either consider actions of $\Sing(G)$ or restrict to discrete groups $G$.
\end{remark}

Lastly, we review universal spaces of families of subgroups, as these will be required in Section \ref{sec:cof-rep-operads}. For any group $G$ and any family of subgroups $\sF$, the \emph{universal space of principal $G$\nobreakdash-bundles for the family $\sF$} is a space $E_{\sF}G$ characterized (up to $G$-equivariant weak equivalence) by the following properties (see \cite[Section 1.2]{luck-survey-classifying-families}):
\begin{itemize}
\item All isotropy groups of $E_{\sF}G$ belong to $\sF$, or equivalently, $(E_{\sF} G)^H=\emptyset$ if $H\not\in\sF$, where $(-)^H$ denotes $H$-fixed points.
\item $(E_{\sF}G)^H\simeq *$ for all $H\in\sF$.
\end{itemize}
The existence of CW-models for these spaces $E_\sF G$ is verified in \cite{luck-uribe} for all groups and families considered in this paper (i.e., for $G$ a compact Lie group). Existence can also be deduced from \cite[Remark 6.5]{piacenza}.

Now, let $\Sigma$ be another group and consider the product $G\times\Sigma$.
Given a family of subgroups $\sF$ of $G\times\Sigma$, let $E_{\sF}(G\times \Sigma)$ be the corresponding universal space for principal $(G\times \Sigma)$\nobreakdash-bundles. If $\sF$ is the family of all subgroups
$K\le G\times\Sigma$ such that $K\cap(\{e\}\times \Sigma)=\{e\}$, then
$E_{\sF}(G\times \Sigma) = E_G(\Sigma)$ is the universal space for $G$-equivariant principal $\Sigma$-bundles. Observe that a subgroup $K\le G\times\Sigma$ satisfies the condition $K\cap(\{e\}\times \Sigma)=\{e\}$ if and
only if $K$ is of the form
$$
\Gamma(\rho)=\{(h, \rho(h)) \mid h\in H,\ \rho\colon H\to \Sigma \},
$$
where $H$ is a subgroup of $G$.

As explained in~\cite[Section 2.6]{cort_ellis}, a CW-model for $E_{\sH}G$ is a cofibrant replacement of the one point space $*$ in the $\sH$-model structure of spaces, for any family $\sH$. Similarly $E_G(\Sigma)$ is a cofibrant replacement of the one point space $*$ in the $\sF$-model structure on $\Top^{G\times \Sigma}$ corresponding to the family $\sF$ described above.

\subsection{Operads}\label{sect:operads}

Given $n\ge 0$, let $\Sigma_n$ denote the symmetric group on $n$ letters,
where by convention $\Sigma_0= \Sigma_1$ is the trivial group. Let $(\M, S, \otimes)$ be a
a symmetric monoidal category and let $\M^{\Sigma_n}$ the category of objects of $\M$ which have a right $\Sigma_n$-action.

An \emph{operad} in $\M$ is a symmetric sequence $P=(P(n))_{n\in \N}$ of
objects (that is, each $P(n)$ is an object in $\M^{\Sigma_n}$) equipped with
an identity map $S\to P(1)$ and composition product maps
$$
P(n)\otimes P(k_1)\otimes\cdots\otimes P(k_n) \to P(k_1+\cdots+k_n)
$$
satisfying associativity, identity, and equivariance axioms (with respect to
$\Sigma_n$); see for instance~\cite{May_def_operads}. A \emph{morphism of
operads} is a morphism of the underlying collections that is compatible with
the unit maps and the composition product maps. We denote by $\Oper(\M)$ the
category of operads in $\M$.

A \emph{$P$-algebra} is an object $X$ of $\M$ together with an action of $P$
on $X$ given by maps $P(n)\otimes X^{\otimes n} \to X$, for every $n\ge 0$,
compatible with the symmetric group action, the unit of $P$, and subject to
the usual associativity relations. We denote by $\Alg_P(\M)$ the category of
$P$-algebras in $\M$.

We recall now the method by which cofibrancy is defined for an operad.
Let $(\M, S, \otimes)$ be a cofibrantly generated monoidal model category
and consider the category of collections in $\M$
$$
\Coll(\M) = \prod_{n\geq 0} \M^{\Sigma_n}.
$$
We can endow $\Coll(\M)$ with a model structure via the product model
structure. There are several choices for model structures on $\M^{\Sigma_n}$
which will give different model structures on $\Coll(\M) $. We could use the projective
model structure, that is, a map $f$ is a fibration or a weak equivalence in $\M^{\Sigma_n}$ if
when we forget the $\Sigma_n$ action $f$ is a fibration or a weak equivalence
in~$\M$. Other possibility is to take the $\Sigma_n$-equivariant model structure, where
a map $f$ is a fibration or a weak equivalence in $\M^{\Sigma_n}$ if
$f^H$ is a fibration or a weak equivalence in~$\M$ for every $H\le \Sigma_n$, where $(-)^H$ is
the $H$-fixed points functor.

In any case, a~map $f$ in $\Coll(\M)$ is a weak equivalence, a fibration, or a cofibration if and only if
$f_n$ is a weak equivalence, a fibration or a cofibration in $\M^{\Sigma_n}$ for every $n$,
respectively. There is a free-forgetful adjunction
$$
\xymatrix{
F \colon\Coll(\M) \ar@<3pt>[r] & \ar@<1pt>[l] \Oper(\M)\colon U,
}
$$
where $U$ is the forgetful functor, and the left adjoint is the free operad
generated by a collection.

An operad $P$ is said to be \emph{underlying cofibrant} if it is cofibrant as a
collection after applying the forgetful functor, that is, if $P(n)$ is cofibrant
in $\M^{\Sigma_n}$ for every $n\ge 0$. When the projective model structure on $\M^{\Sigma_n}$ is used, $P$ is called \emph{$\Sigma$-cofibrant} if the map $I\to P$ satisfies the left lifting property with respect to all
trivial fibrations of collections, where $I$ denotes the initial
object in the category of operads, that is, $I(1)=S$ and $I(n)=\emptyset$ if $\ne 1$.
Observe that if the unit of the monoidal category $S$ is cofibrant, then every
$\Sigma$-cofibrant operad is underlying cofibrant.

Berger and Moerdijk considered the passage of a model structure from the category of collections to the category of operads via the
free-forgetful adjunction \cite{BM03}, where $\M^{\Sigma_n}$ is equipped with the projective model structure. Under certain hypotheses on $\M$, this transfer
endows the category of operads with a model structure so that the forgetful
functor creates weak equivalences and fibrations; see \cite[Theorem~3.1 and
Theorem~3.2]{BM03}.
Rezk considered in his thesis the same transfer in the case that $\M$ is the category of simplicial sets and $\M^{\Sigma_n}$ is equipped with the equivariant model structure; see \cite[Proposition~3.1.5 and Proposition 3.2.11]{rezk-phd}.

The existence of both model structures follow from the following transfer principle (\cite[Sections 2.5 and 2.6]{BM03}, \cite[Proposition 3.1.5]{rezk-phd}), which is based in Quillen's path-object argument~\cite[II~p.4.9]{Quillen}.
\begin{theorem}\label{thm:transfer}
Let $\M$ be a cofibrantly generated model category with $I$ and $J$ the set of generating cofibrations
and generating trivial cofibrations, respectively. Let $\mathcal{N}$ be a category with small colimits and
finite limits, and let $F\colon \M\rightleftarrows \mathcal{N}\colon U$ be an adjunction. Suppose that
\begin{itemize}
\item[{\rm (i)}] the left adjoint $F$ preserves small objects, and the domains of the maps in $F(I)$ (resp. $F(J)$) are small relative to $F(I)$-cell (resp. $F(J)$-cell)
\item[{\rm (ii)}] $\mathcal{N}$ has a fibrant replacement functor, i.e., there is a coaugmented functor $(E,\epsilon)$ on $\mathcal{N}$ such that
$UEX$ is fibrant in $\M$ and $U\epsilon_X$ is a weak equivalence in $\M$, for every $X$ in $\mathcal{N}$, and
\item[{\rm (iii)}] $\mathcal{N}$ has functorial path-objects for every $X$ such that $UX$ is fibrant in $\M$.
\end{itemize}
Then, there is a cofibrantly generated model structure on $\mathcal{N}$, where a map $f$ is a fibration or a weak equivalence if and only if $Uf$ is a weak equivalence of a fibration in $\M$, respectively. The
set of generating cofibrations and generating trivial cofibrations of $\mathcal{N}$ are$FI$ and $FJ$,
respectively.
\end{theorem}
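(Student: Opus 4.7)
The plan is to apply the standard recognition theorem for cofibrantly generated model structures, taking $FI$ and $FJ$ as candidate generating (trivial) cofibrations. Define a morphism $f$ in $\mathcal{N}$ to be a weak equivalence (respectively, fibration) exactly when $Uf$ is a weak equivalence (respectively, fibration) in $\M$, and declare cofibrations to be those maps with the left lifting property with respect to every trivial fibration. Closure of weak equivalences and fibrations under 2-out-of-3 and retracts is inherited from $\M$. By the $F\dashv U$ adjunction, $f\in FJ\inj$ iff $Uf\in J\inj$, i.e., iff $f$ is a fibration, and similarly $f\in FI\inj$ iff $f$ is a trivial fibration. Thus the proposed generating sets detect (trivial) fibrations correctly.

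By hypothesis~(i), the small object argument applies to $FI$ and $FJ$, yielding functorial factorizations of any morphism as a map in $FI\cell$ followed by a map in $FI\inj$, and as a map in $FJ\cell$ followed by a map in $FJ\inj$. The first provides factorization as a cofibration followed by a trivial fibration. The second becomes a trivial cofibration followed by a fibration once we establish the key acyclicity statement: every map in $FJ\cell$ is a weak equivalence. Together with the fact that $FJ\cell$ has the LLP against $FJ\inj$ (i.e., against all fibrations), this also yields the lifting axiom for trivial cofibrations against fibrations, while the lifting axiom for cofibrations against trivial fibrations is immediate from the definition of cofibrations.

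The crux is the acyclicity claim, which I would prove via Quillen's path-object argument using (ii) and (iii). Let $g\colon X\to Y$ lie in $FJ\cell$; since $g$ has the LLP against every fibration, and $EX\to *$ is a fibration (as $UEX$ is fibrant by (ii)), lifting $\epsilon_X\colon X\to EX$ against $g$ produces $s\colon Y\to EX$ with $s\circ g=\epsilon_X$. Naturality of $\epsilon$ gives $Eg\circ\epsilon_X=\epsilon_Y\circ g$, so the morphisms $Eg\circ s$ and $\epsilon_Y$ from $Y$ to $EY$ agree after precomposition with $g$. Invoking the functorial path-object $P(EY)\to EY\times EY$ on the fibrant object $EY$ supplied by~(iii), and lifting once more against $g$, I obtain a right homotopy $H\colon Y\to P(EY)$ between $Eg\circ s$ and $\epsilon_Y$. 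Applying $U$, this is a right homotopy in $\M$ between maps into the fibrant object $UEY$, so $U(Eg)\circ Us$ and $U\epsilon_Y$ become equal in $\Ho(\M)$; in particular $U(Eg)\circ Us$ is a weak equivalence. Combined with $Us\circ Ug=U\epsilon_X$, itself a weak equivalence, the 2-out-of-6 property forces $Ug$, $Us$, and $U(Eg)$ to all be weak equivalences.

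The main obstacle is this third step: converting a right homotopy in $\mathcal{N}$ into genuine weak-equivalence information about $Ug$ in $\M$. It is precisely here that hypotheses~(ii) and~(iii) are indispensable, guaranteeing both that $UEY$ is fibrant (so the right homotopy descends to an equality in $\Ho(\M)$) and that a functorial path object is available to produce the homotopy in the first place. Once acyclicity is established, the remaining model category axioms assemble routinely, and the identification of $FI$ and $FJ$ as the generating (trivial) cofibrations is immediate from the small object argument construction.
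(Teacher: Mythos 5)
Your proof is correct and is essentially the argument of Berger--Moerdijk and Rezk that the paper cites for this theorem; the paper itself states Theorem~\ref{thm:transfer} with a reference and no proof, so there is nothing to diverge from. You correctly reduce everything to the recognition theorem via the adjunction identity $f\in F(J)\text{-inj}\Leftrightarrow Uf\in J\text{-inj}$, and the key acyclicity step (every map in $F(J)$-cell is a weak equivalence) is exactly Quillen's path-object argument: the lift $s$ against the fibration $EX\to *$, the right homotopy $H\colon Y\to P(EY)$ between $Eg\circ s$ and $\epsilon_Y$ obtained by lifting against the path fibration, and then the conclusion via $\Ho(\M)$. One small remark: you invoke two-out-of-six, but it is cleaner (and the more standard phrasing) to note that $Us$ admits both a left inverse ($U(Eg)$, up to homotopy) and a right inverse ($Ug$, up to homotopy) in $\Ho(\M)$, hence is an isomorphism there, hence a weak equivalence, and then $Ug$ is a weak equivalence by two-out-of-three from $Us\circ Ug=U\epsilon_X$; both versions are legitimate since in a model category the weak equivalences are precisely the maps inverted in $\Ho(\M)$. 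Also worth making explicit (you use it implicitly) is that $U$, being a right adjoint, preserves the terminal object and binary products, so $UEX\to *$ and $U(P(EY))\to UEY\times UEY$ really are a fibration with fibrant target and a path object in $\M$, respectively.
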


\begin{remark} \label{remark:rezk-(iii)-comes-for-free}
If $\M$ is a simplicial model category, $\mathcal{N}$ a category enriched over simplicial sets and the adjunction is a simplicial adjunction, then condition (iii) is automatically fulfilled since $(-)^{\Delta[1]}$
works as a path-object functor; see~\cite[Proposition 3.1.5]{rezk-phd}.
\end{remark}
Even if we cannot transfer the model structure to the category of operads, we
know that a trivial fibration of operads should be a map which is a trivial
fibration when viewed as a collection. Thus, we may define an operad to be
\emph{cofibrant} if the map of operads $I \to P$ satisfies the left
lifting property (in the category of operads) with respect to all trivial fibrations of operads. In
particular, if $P$ is $\Sigma$-cofibrant, then the map $S\to P(1)$ is a
cofibration.

We will make use of the previous results in Sections~\ref{sec:model-operads}
and~\ref{sec:cof-rep-operads}.

\subsection{Equivariant Operads}

We now restrict attention to the model category of $G$-spaces, where $G$ is a compact Lie group. The reason for restricting to compact Lie groups is explained in \cite[Preface]{lewis-may-steinberger}: for larger classes of groups, the connection to representation theory is lost, even though aspects of the homotopy theory are possible. The definitions in this section first appeared in \cite[Ch.VII]{lewis-may-steinberger}.

\begin{defn}
A \emph{$G$-operad} in $\Top$ is an operad $P$ valued in $\Top^G$.  That is, $P$ consists of a sequence $(P(n))_{n\in\mathbb{N}}$ of $G\times \Sigma_n$\nobreakdash-spaces, with $G$ acting on the left and $\Sigma_n$ acting on the right, together with $G$-equivariant composition product maps. Furthermore, $G$ must fix the unit in $P(1)$.
\end{defn}

A morphism of $G$-operads $f:P\to Q$ is a sequence $(f_n\colon P(n)\to Q(n))_{n\in\mathbb{N}}$ of $G\times \Sigma_n$-equivariant maps such that it preserves the unit and is compatible with the composition product of the operads. We denote by $\Oper(\Top^G)$ the category of $G$-operads. The condition that $G$ must fix the unit is needed to ensure good behavior of $P$-algebras.

Following~\cite[Ch. IV, \S{1}]{lewis-may-steinberger}, we say that a principal $(G, \Sigma_n)$-bundle $X\to X/\Sigma_n$ is a principal $\Sigma_n$-bundle and a $G$-morphism such that $G$ acts on $X$ via $G$-bundle maps.

\begin{defn}
A $G$-operad $P$ is \emph{$\Sigma$-free} if all $P(n)$ are universal spaces of
principal $(G,\Sigma_n)$-bundles. It is an \emph{$E_\infty$-$G$-operad} if
these bundles are universal. It is \emph{cellular} if all $P(n)$ are
$(G\times \Sigma_n)$-CW complexes.
\end{defn}

We depart from \cite[Ch.VII]{lewis-may-steinberger} in our definition of an
algebra over an equivariant operad. Rather than requiring the twisted half
smash product, we let the operad act in the more modern way, in this case
using that $G$-spaces and $G$-spectra are both tensored over $G$-spaces.

\begin{defn}
Let $P$ be a $G$-operad. A \emph{$P$-algebra} in $\Top^G$ is a based
$G$-space $X$ together with $G$-maps $P(n)\times X^n \to X$ compatible with
the $\Sigma_n$-action and the operad structure maps.
\end{defn}

A \emph{map of $P$-algebras} is a map of $G$-spaces which is compatible with
the $P$-action. We denote by $\Alg_P(\Top^G)$ the category of $P$-algebras in $\Top^G$.

\subsection{Fixed-point model structures for $G$-spectra}

Moving now to fixed-point model structures on $G$-spectra (which will be required in Section \ref{sec:admissibility}), we follow
\cite{hovey-white} and define a \textit{$G$-spectrum} to be an orthogonal
spectrum with a $G$-action, that is, a sequence $X$ of pointed $G\times
O(n)$-spaces $X_n$ for $n\geq 0$, where $O(n)$ denotes the orthogonal
group of dimension $n$, with associative and unital $G\times O(n)\times O(m)$-equivariant
structure maps $S^n\wedge X_m \to X_{n+m}$ (such $X$ are called $G$-orthogonal
sequences). When $G = \{e\}$, a $G$-spectrum
is an orthogonal spectrum. The category of $G$-spectra is closed symmetric
monoidal, because a $G$-spectrum $X$ is an $S$\nobreakdash-module in the category of
$G$-orthogonal sequences (here $S$ is the sphere spectrum, and a commutative
monoid in the usual way). The monoidal product on $G$-orthogonal sequences is
given by
\[
(X\otimes Y)_{n} = \bigvee_{p+q=n} O (n)_{+} \wedge_{O (p)\times O
(q)} (X_{p}\wedge Y_{q})
\]
with diagonal $G$-action.  The closed structure is given by
\[
\hom (X,Y)_{n} = \prod_{m\geq n} \Map_{O (m-n)} (X_{m-n}, Y_{m}),
\]
where $g\in O (n)$ acts on a map $f$ by acting on $f(x)\in Y_{m}$ using the
inclusion
\[
O (n) \subseteq O (m-n)\times O (n) \xrightarrow{} O (m).
\]
The enrichment over topological spaces is given by
\[
\Map (X,Y) = \prod_{n} \Map_{G\times O (n)} (X_{n}, Y_{n}).
\]

Following \cite{hovey-white}, let $\mathcal{U}$ denote a complete
$G$-universe, let $V$ be an $n$-dimension $G$-representation in
$\mathcal{U}$, and let $\ev_V$ be the functor from $G$-spectra to $\Top_*^G$
which takes a spectrum $X$ to the space
\[
X (V) = O (\R^{n},V)_{+} \wedge_{O (n)} X_{n}.
\]
The left adjoint to $\ev_V$ is $F_V$, defined as
\[
(F_{V}K)_{n+k} = O (n+k)_{+} \wedge_{O (k)\times O (n)} (S^{k}\wedge
(O (V, \R^{n})_{+}\wedge K)),
\]

\begin{proposition}
For any family of subgroups $\sH$ of $G$, we may endow the category of $G$\nobreakdash-spectra with a proper, cellular, monoidal, topological model structure denoted $\Sp^\sH$, which is
the stabilization of the model structure $\Top^\sH$, where weak equivalences and fibrations are maps $f$ such that $f^H$ is a weak equivalence or fibration for all $H \in \sH$.
\end{proposition}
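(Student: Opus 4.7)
The plan is to construct $\Sp^\sH$ in two stages: first as a level model structure, then as a Bousfield localization that inverts the stabilization maps. This parallels the construction in \cite{hovey-white} of the positive complete stable model structure, but with the absolute $G$-weak equivalences replaced throughout by $\sH$-weak equivalences.

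First, I would build the level $\sH$-model structure $\Sp^\sH_{\text{lev}}$ by transfer along the forgetful functor from $G$-spectra to the product category $\prod_{n\ge 0} (\Top_*^{G\times O(n)})$, where the $n$-th factor is given the model structure in which a map $f$ is a weak equivalence (resp. fibration) if $f^H$ is a weak equivalence (resp. fibration) in $\Top_*^{O(n)}$ for every $H\in\sH$. By Proposition \ref{prop:family-top} and its pointed analogue (and the fact that each $\sH$, viewed as a collection of subgroups of $G\times O(n)$ via $H\mapsto H\times 1$, satisfies Illman's condition because any closed subgroup of $G\times O(n)$ is a closed subgroup of a compact Lie group), each factor is a proper, topological, monoidal model category. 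The generating (trivial) cofibrations of $\Sp^\sH_{\text{lev}}$ are then $\{F_V((G/H\times O(n))_+ \wedge i)\}$ for $H\in\sH$, $V$ an indexing representation, and $i$ a generating (trivial) cofibration in $\Top$; the transfer is valid by Theorem \ref{thm:transfer} (or rather its obvious level analogue), with path-objects built via the topological enrichment as in Remark \ref{remark:rezk-(iii)-comes-for-free}.

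Next, I would produce $\Sp^\sH$ as the left Bousfield localization of $\Sp^\sH_{\text{lev}}$ at the set of stabilization maps
$$
\lambda_{V,W}^H \colon F_{V\oplus W}\bigl((G/H)_+ \wedge S^W \wedge s\bigr) \longrightarrow F_V\bigl((G/H)_+ \wedge s\bigr),
$$
where $H\in\sH$, $s$ ranges over the (co)domains of generating cofibrations in $\Top_*$, and $V,W$ range over the indexing representations of the chosen complete $G$-universe $\mathcal{U}$. Left properness of $\Sp^\sH_{\text{lev}}$ is inherited from the level structure because cofibrations in $\Top_*^\sH$ are in particular underlying Hurewicz cofibrations; cellularity follows by the same argument as in the appendix cited by the authors (the domains of the generating cofibrations are compact and cofibrations are effective monomorphisms, since these properties hold in each $\Top_*^\sH$). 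Hence the localization exists by Hirschhorn's theorem. By construction, weak equivalences are the $\lambda$-local equivalences and fibrant objects are the level-$\sH$-fibrant $\Omega$-spectra (for the universe $\mathcal{U}$), so fibrations between fibrant objects are exactly the maps $f$ with each $f^H$ a fibration in $\Sp^{O(n)}$ for $H\in\sH$.

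Finally, I would check the four properties. Being topological is preserved from the level structure, since the localization is simplicial/topological in Hirschhorn's setup. Right properness is preserved because the stable fibrations are detected by $H$-fixed points for $H\in\sH$, and $(-)^H$ preserves pullbacks. Cellularity is preserved by Hirschhorn's theorem. The one substantive obstacle is monoidality: one must verify that the pushout product of a generating cofibration with a $\lambda$-map is a stable $\sH$-equivalence. The key observation is that smashing with $(G/H)_+$ commutes (up to equivariant homotopy) with the relevant induction/restriction, so the argument reduces to the standard pushout-product verification for $\lambda_{V,W}$ in orthogonal $G$-spectra, exactly as in \cite{hovey-white}; the monoid axiom follows by the same reduction. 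This monoidal compatibility is the only step that truly requires more than a mechanical assembly of known facts; the rest is a routine, if notationally heavy, application of the transfer theorem and Hirschhorn's localization machinery together with the material developed in the appendix.
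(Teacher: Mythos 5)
Your proposal takes essentially the same route as the paper, which simply cites \cite[Theorems 3.3 and 4.5]{hovey-white} for the level-then-stabilize construction and defers cellularity to the appendix; you have spelled out the steps that the reference carries. One caveat worth flagging: your justification of right properness (``stable fibrations are detected by $H$-fixed points, and $(-)^H$ preserves pullbacks'') quietly uses the explicit characterization of stable fibrations, which is itself a nontrivial output of the \cite{hovey-white}/\cite{MMSS} machinery rather than a formal consequence of being a left Bousfield localization --- left Bousfield localization does \emph{not} preserve right properness in general --- so that step needs the fibration characterization in hand first, as in the cited theorems.
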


\begin{proof} The proof proceeds just as in \cite[Theorems 3.3 and 4.5]{hovey-white}, where now the generating cofibrations are the maps $F_{V}i$ for $i$ a generating cofibration
\[
(G/H \times S^{n-1})_{+}\xrightarrow{}(G/H \times D^{n})_{+}
\]
of $\Top^\sH$.  The generating trivial cofibrations are the maps $F_{V}j$ for
$j$ a generating trivial cofibration
\[
(G/H \times D^{n})_{+} \xrightarrow{} (G/H\times D^{n}\times
D^{1})_{+}
\]
of $\Top^\sH$. This model category $\Sp^\sH$ is enriched over $\Top$ by
considering $\Map_{\Sp^\sH} (X,Y)$ as a subspace of $\Map (X,Y)$ consisting
of maps of orthogonal spectra.

The proof of cellularity (in the sense of \cite[Section 12.1]{hirschhorn}) is technical, but
is necessary in order for left Bousfield localizations to exist for general sets of maps. We delay it until Proposition~\ref{prop:G-spectra-cellular}.
\end{proof}

\begin{remark}
If a combinatorial model category is used for $\Top^G_+$, e.g., $\Delta$-generated spaces or simplicial sets, then the model structure above is also combinatorial. This is discussed in \cite[Section 8]{white-localization}.
\end{remark}

\section{Family Model Structures on $G$-Operads} \label{sec:model-operads}

In this section we will use the transfer principle (Theorem~\ref{thm:transfer}) to produce fixed-point model structures on the category of $G$-operads corresponding to sequences $\sF = (\sF_n)$ of families of subgroups of $G\times \Sigma_n$. Theorem \ref{thm:family-oper} generalizes \cite[Proposition 3.2.11]{rezk-phd}, both to the equivariant context and to the context of topological spaces. A similar result, regarding so-called genuine equivariant operads (built using $G$-equivariant trees), is contained in \cite{bonventre-pereira}. In Section \ref{sec:cof-rep-operads} we will use the model structures of Theorem \ref{thm:family-oper} to discuss the operads $E_\infty^\sF$, the study of which forms the heart of this paper.

\begin{theorem} \label{thm:family-oper}
Let $\sF=\{\sF_n\}_{n\ge 0}$ where each $\sF_n$ is a family of subgroups of $G\times \Sigma_n$. The category of $G$-operads inherits a transferred model structure, called the $\sF$-model structure, in which a map of $G$-operads $f\colon P\to Q$ is a weak equivalence or a fibration, if for every $n\ge 0$, the map $f(n)\colon P(n)\to Q(n)$ is a weak equivalences or a fibration in $\Top^{G\times \Sigma_n}$ with the $\sF_n$-model structure, respectively. More explicitly, $f$ is a weak equivalence or a fibration if $f(n)^H$ is a weak equivalence or a fibration in $\Top$, for every $H\in\sF_n$.
\end{theorem}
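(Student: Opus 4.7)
The plan is to apply the transfer principle (Theorem \ref{thm:transfer}) to the free-forgetful adjunction
$$
F\colon \Coll(\Top^G) \rightleftarrows \Oper(\Top^G)\colon U,
$$
where $\Coll(\Top^G) = \prod_{n\ge 0} \Top^{G\times \Sigma_n}$ is equipped with the product model structure whose $n$-th factor is the $\sF_n$-model structure of Proposition~\ref{prop:family-top}. This product model structure is cofibrantly generated, with generating (trivial) cofibrations given by the maps of the form $(G\times \Sigma_n)/H \times i$, placed in arity $n$, where $H\in \sF_n$ and $i$ is a generating (trivial) cofibration of $\Top$. Once the transfer applies, the resulting fibrations and weak equivalences on $\Oper(\Top^G)$ are exactly those created by $U$, which unwinds to the characterization in the statement.

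Next I would verify conditions (i)--(iii) of Theorem~\ref{thm:transfer}. For (ii), every object of $\Top$ is fibrant, hence every object of $\Top^{\sF_n}$ is fibrant, and so every $G$-operad is already fibrant as a collection; the identity functor serves as the coaugmented fibrant replacement. For (iii), I would invoke the topological enrichment: the category of $G$-operads is cotensored over $\Top$ arity-wise, and since $[0,1]$ is an interval object in $\Top$, the cotensor $P^{[0,1]}$ inherits an operad structure via the diagonal $[0,1]\to [0,1]\times [0,1]$, giving a functorial path object factoring the diagonal $P\to P\times P$ through a weak equivalence followed by a (levelwise) fibration. This is the topological analogue of the $\Delta[1]$-based argument of Remark~\ref{remark:rezk-(iii)-comes-for-free}, applied in the topological rather than simplicial setting, and it works uniformly for all objects, not only fibrant ones.

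The main obstacle is condition (i), smallness, since not every compactly generated space is small in the strict sense. The standard resolution, going back to \cite{BM03}, is to use the topological variant of the small object argument: the relevant domains in $F(I)$ and $F(J)$ are free operads on collections of the form $(G\times \Sigma_n)/H \times \partial D^k$ (or analogous for $J$), and the free operad functor in $\Top^G$ is built, at each arity, as a (colimit indexed by trees) of tensor products of the input collection. Smallness with respect to the relevant cell complexes reduces to smallness of such CW-type inputs relative to transfinite compositions of closed $T_1$-inclusions, which is standard for compactly generated spaces. With these pieces in place the transfer produces the desired model structure on $\Oper(\Top^G)$.
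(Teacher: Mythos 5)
Your proof takes essentially the same route as the paper's: transfer along the free–forgetful adjunction from $\Coll_G = \prod_n \Top^{G\times\Sigma_n}$ (with the product of $\sF_n$-model structures), using fibrancy of all objects for (ii) and the enrichment for (iii) — you supply a topological path object via the cotensor with $[0,1]$ where the paper instead cites Remark~\ref{remark:rezk-(iii)-comes-for-free} using the induced simplicial enrichment, but these are interchangeable. For (i), your description of the free operad via trees identifies the domains, but the substance of the smallness claim is that maps in $F(I)\text{-cell}$ (as maps of operads) forget to levelwise closed inclusions; the paper makes this explicit by noting that such maps are built from inclusions by pushouts, transfinite composition, products with identities, and quotients by symmetric group actions, all of which preserve inclusions — a point worth stating rather than leaving implicit, since one must handle pushouts of operads (not just the free operad construction itself).
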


Our proof of this theorem works with either pointed or unpointed spaces. %, but in the pointed setting we make use of the observation that the (co)domains of the generating cofibrations in $\Top^{G\times \Sigma_n}$ have non-degenerate $G$-fixed basepoints. This is needed in order to control transfinite compositions of trivial $h$-cofibrations, as is already the case non-equivariantly.
In order to apply the transfer principle, we start with the category of $G$-collections. We define a \textit{$G$-collection} to be a collection of $G$-spaces, that is, a sequence of spaces $(C_n)_{n\in \N}$ where $C_n$ is a $G\times \Sigma_n$-space. Let $\Coll_G=\prod_{n\ge 0}(\Top^{G})^{\Sigma_n}$ denote the category of $G$-collections. We have seen that the action of $G$ needed in the definition of a $G$-operad may be encoded internally at the level of collections and then passed from $\Coll_G$ to $Oper^G$ via the usual free operad functor $F$ \cite{BM03}. In other words, we do not need to consider actions of $G$ on the operad trees. The categorical algebra and the construction of the free operad functor on a collection are independent of the chosen model structure.

We define a model structure on $\Coll_G$ in the usual way, as the product model structure coming from the $\sF_n$ model structures on $\Top^{G\times\Sigma_n}$:

\begin{align*}
\Coll_G = \prod_{n\ge 0} (\Top^G)^{\Sigma_n} = \prod_{n\ge 0} \Top^{G\times \Sigma_n}.
\end{align*}
We are now prepared to prove the main theorem of this section.

\begin{proof}[Proof of Theorem \ref{thm:family-oper}]
Let $\sF_n$ be a family of subgroups of $G\times \Sigma_n$ for every $n\ge 0$. Let $\Coll_G$ be the category of $G$-collection with the model structure associated to the families $\sF_n$, as describe above.

We now apply Theorem~\ref{thm:transfer} to transfer the
model structure on $\Coll_G$ to a model structure on
$G$-operads via the free forgetful adjunction
$$
\xymatrix{
F \colon\Coll_G \ar@<3pt>[r] & \ar@<1pt>[l] \Oper^G\colon U.
}
$$
Since $\Top^G$ is a simplicial category, we just need to check conditions (i) and (ii) from Theorem~\ref{thm:transfer}, thanks to Remark \ref{remark:rezk-(iii)-comes-for-free}.

Condition (i) holds because all spaces are small relative to inclusions,  the generating (trivial) cofibrations $I$ (resp. $J$) of $G$-collections are inclusions, and the maps in $F(I)$-cell (resp. $F(J)$-cell) are built from $I$ (resp. $J$) via transfinite composition, pushouts, products with identity maps, and quotients by symmetric group actions (all of which preserve inclusions). Every object in $\Top^{G\times \Sigma_n}$ with the $\sF_n$-model structure is fibrant, so condition (ii) holds trivially if we use the identity functor as the functor $E$.
\end{proof}

\begin{remark}
In the model structure on $\Coll_G$ above, the domains of the generating (trivial) cofibrations are cofibrant. To see this, note that each $\Top^{G\times \Sigma_n}$ has cofibrant domains of the generating (trivial) cofibrations. It follows that the same is true for the $\sF$-model structure on $G$-operads.
\end{remark}

\begin{remark} \label{remark:simplicial-operads}
Theorem \ref{thm:family-oper} also holds if $sSet^G$ is used instead of $\Top^G$. Condition (i) holds automatically, since all simplicial sets are small. If $G$ is a discrete group, then the Quillen equivalence between topological spaces and
simplicial sets given by the geometric realization $|-|$ and singular functor $\Sing(-)$ induces a Quillen equivalence between $G\times \Sigma_n$-topological spaces and $G\times\Sigma_n$-simplicial sets, both with the $\sF_n$-model structure (which we also denote by $|-|$ and $\Sing(-)$). Moreover, these functors are lax monoidal, so applying the composite $\Sing(|-|)$ levelwise gives as a functor $E$ in $\Oper^G$ satisfying condition (ii). Alternatively, the $Ex^\infty$ functor may be used as a fibrant replacement.
\end{remark}

Using the model structure of Theorem \ref{thm:family-oper}, we can define the $E_\infty^\sF$-operads discussed in the introduction. To do so, recall that $\Com$ is the terminal $G$-operad, whose spaces $\Com(n) = \ast$ with a trivial $G$-action, for all $n$.

\begin{defn} \label{defn:E-infty-F-operad}
Let $\sF=(\sF_n)$ be a sequence of families $\sF_n$ of subgroups of $G\times \Sigma_n$. An \emph{$E_{\infty}^{\sF}$-operad} is a cofibrant replacement of $\Com$ in the $\sF$-model structure on $\Oper^G$.
\end{defn}

Observe that, by \cite[Section 2.6]{cort_ellis}, any $G$-operad $P$ that is cofibrant as a $G$-collection (with the $\sF$-model structure), and weakly equivalent to $\Com$, will have spaces $P(n)$ that are universal classifying spaces of the families $\sF_n$. We will use this observation in Section \ref{sec:cof-rep-operads} to prove the Blumberg-Hill conjecture. Note that not every cofibrant $G$-operad forgets to a cofibrant $G$-collection. Recall from Section~\ref{sect:operads} that an operad $P$ is called $\Sigma$-cofibrant if the map from the initial operad $I$ to $P$ forgets to a cofibration in the projective model structure on collections. This implies the operad is cofibrant as a collection (at least, if the monoidal unit is cofibrant). In order to distinguish our more general setting, we make the following definition.

\begin{defn}\label{def:F-cof}
Let $\sF=(\sF_n)$ be a sequence of families $\sF_n$ of subgroups of $G\times \Sigma_n$. A $G$-operad $P$ is called \emph{$\sF$-cofibrant} if the map from the initial operad $I$ to $P$ forgets to a cofibration in the $\sF$-model structure on $G$-collections.
\end{defn}

\begin{example} \label{ex:recovering-BM-model}
If the sequence $\sF$ has $\sF_n = 1 \times 1$ for all $n$, then the model structure of Theorem \ref{thm:family-oper} coincides with the model structure on $G$-operads obtained from applying \cite[Theorem 3.2]{BM03} to the base model category $\Top^\sH$ with the family model structure corresponding to the trivial family $\sH = 1$. The corresponding operad $E_\infty^\sF$ is non-equivariantly contractible (indeed, the $G$-action is free) and has a free $\Sigma_n$-action. If $G$ is trivial, then Theorem \ref{thm:family-oper} recovers the model structure of \cite[Section  3.3.2]{BM03}, where the base is $\Top$. For any family $\sH$ of subgroups of $G$, there is an operad $E_\infty^\sH$ defined from the sequence $\sF_n = H \times 1$ for all $H \in \sH$. These operads interpolate between a trivial $G$-action (when $\sH$ is all $H < G$) and a free $G$-action (when $\sH$ is trivial).
\end{example}

\begin{example}  \label{ex:recover-E-infty}
If the sequence $\sF$ has $\sF_n = \{H \times 1 \;|\; H < G\}$, i.e., we take the family $\sH$ of all subgroups of $G$ in Example \ref{ex:recovering-BM-model}, then the model structure of Theorem \ref{thm:family-oper} coincides with the model structure on $G$-operads obtained from applying \cite[Theorem 3.2]{BM03} to the base model category $\Top^G$, with its usual fixed-point model structure. We denote the corresponding operad $E_\infty^\sF$ as $E_\infty^G$. Observe that $E_\infty^G(n)$ is equivariantly contractible and has a free $\Sigma_n$-action. This operad can act in any $G$-topological model category, via the enrichment in $\Top^G$. In $G$-spectra, its algebras are equivalent to algebras over an $E_\infty$-operad as in Example \ref{ex:recovering-BM-model}, because an equivariant map from $E\Sigma_n$ (with a trivial $G$-action) into the $G$-space $\hom_{\Top^G}(X^n,X)$ must land in the $G$-fixed points, i.e., in the topological enrichment $\hom_{\Top}(X^n,X)$.
\end{example}

\begin{example} \label{ex:complete-N-infty-operads}
If, for all $n$, $\sF_n$ consists of all graph subgroups $\Gamma_\rho$ where $H<G$ and $\rho: H\to \Sigma_n$, then the corresponding $E_\infty^\sF$-operad is an $E_\infty$-$G$-operad \cite[Definition VII.1.2]{lewis-may-steinberger}. Its $n^{th}$-space is $E_G(\Sigma_n)$, the universal space of the universal
$G$-equivariant principle $\Sigma_n$-bundle. If $G$ is finite, then $E_\infty^\sF$-algebras in $G$-spectra are Quillen equivalent to strictly commutative $G$-ring spectra (this is a consequence of Theorem \ref{thm:family-rectification} below), i.e., $E_\infty^\sF$-algebras have all multiplicative norms.
\end{example}

\begin{example} \label{ex:recovering-rezk-model}
If $G$ is the trivial group, and if the sequence $\sF_n = \{1 \times K \;|\; K < \Sigma_n\}$, then the model structure of Theorem \ref{thm:family-oper} recovers the Rezk model structure on operads \cite[Proposition 3.2.11]{rezk-phd}), using Remark \ref{remark:simplicial-operads}, and extends it to compactly generated spaces. We see that working with families of subgroups that intersect $\Sigma_n$ non-trivially is the same as allowing non-free $\Sigma_n$-actions. When we work simplicially, and when $\sF$ is the sequence $\sF_n$ of families consisting of all subgroups of $G\times \Sigma_n$, then all objects of $\Coll_G$ are cofibrant with respect to the $\sF$-model structure \cite[Proposition 3.1.9]{rezk-phd}. It follows that our theory recovers the $\Com$ operad in this case.
\end{example}

Our last example discusses certain ``partial multiplications" encoded on algebras over $E_\infty^\sF$ operads that do not contain the subgroup $G \times 1$. This example is based on an observation Mike Hill made to the second author in 2014.

\begin{example} \label{ex:restrictions}
Let $\sH$ be a family of subgroups of $G$. If a sequence $\sF_n$ contains all subgroups of the form $H \times 1$ for $H \in \sH$, then any $E_\infty^\sF$-algebra $X$ will have multiplications on $\res_H(X)$ for all $H\in \sH$. To see this, we first recall why $X$ has a multiplication, when $G\times 1\in \sF$. Rewriting $X^{\wedge n}$ as $(\Sigma_n)_+ \wedge_{\Sigma_n} X^{\wedge n} \cong ((G\times \Sigma_n)/(G\times 1)_+ \wedge_{\Sigma_n} X^{\wedge n}$. From here, the universal property of $E_\infty^\sF$ guarantees a $G\times \Sigma_n$-equivariant map $(G\times \Sigma_n)/(G\times 1) \to (E_\infty^\sF(n))_+$, since the isotropy group of $(G\times \Sigma_n)/(G\times 1)$ is in $\sF$ \cite[Definition 1.8]{luck-survey-classifying-families}. Composing with the operad-algebra structure map $(E_\infty^\sF (n))_+ \wedge_{\Sigma_n} X^{\wedge n} \to X$ provides the desired multiplication $X^{\wedge n} \to X$. However, if $G\times 1$ is not in $\sF_n$, then we will only have maps $(G\times \Sigma_n)/(H\times 1) \to E_\infty^\sH (n)$ for $H\in \sH$. This corresponds to a multiplication $\res_H(X)^{\wedge n} \to \res_H(X)$. From this point of view, the theory of $E_\infty^\sF$ operads can be viewed as an enlargement of the theory of $N_\infty$-operads to allow for restricted multiplications and non-free $\Sigma_n$-actions.
\end{example}

\section{Proving $N_\infty$-operads exist}
\label{sec:cof-rep-operads}

In this section we prove a conjecture of Blumberg and Hill. Specifically, for any sequence $\sF = (\sF_n)$ of families of subgroups of $G\times \Sigma_n$, satisfying a certain condition relating the families as $n$ varies (a condition required in order to have operad composition maps), we prove that there is an associated $N_\infty$-operad $P$ realizing the family, i.e., such that $P(n)$ is a universal classifying space for the family $\sF_n$. Throughout this section we will restrict to the types of sequences of families of subgroups considered by Blumberg and Hill, i.e., all $\sF_n$ will contain all subgroups of the form $H \times 1$ (for all closed $H < G$) and will consist of graph subgroups $\Gamma_\rho = \{(h,\rho(h))\}$ for various $\rho: H \to \Sigma_n$. Throughout the section, $G$ is a compact Lie group. The notion of equivariant $N_{\infty}$-operad was introduced in~\cite[Definition 3.3]{blumberg-hill}.

\begin{definition} \label{defn:N-infty}
A $G$-operad $P$ is called $N_\infty$ if the action of $\Sigma_n$ on $P(n)$ is free, and if $P(n)$ is a universal space for a family $\sN_n$ of subgroups of $G\times \Sigma_n$ such that $H\times 1 \in \sN_n$ for all closed subgroups $H < G$ and for all $n$.
\end{definition}

Recall from Section \ref{sec:prelim} that this means
\[
P(n)^K \simeq
\begin{cases}
      \emptyset & \text{ if } K\not\in \sN_n(P), \\
      \ast & \text{ if } K\in \sN_n(P).
   \end{cases}
\]

\begin{remark}
Blumberg and Hill originally required $P(0)$ to be $G$-contractible, but this follows from the condition of being a universal classifying space for a family $\sN_0$ of subgroups of $G\times \Sigma_0 \cong G$ containing all closed $H < G$. Similarly, all of the spaces $P(n)$ are contractible in $\Top^G$, since $\sN_n$ contains all $H\times 1$.
\end{remark}

\begin{remark}
Requiring the $\Sigma_n$-action on $P(n)$ to be free is the same as requiring the fixed points $P(n)^{1\times K} \cong \emptyset$ for all non-trivial $K < \Sigma_n$, i.e., the subgroup $1\times K$ cannot be in the family $\sN_n$. Furthermore, for any $\Gamma \in \sN_n$, $\Gamma \cap (1\times \Sigma_n) = \{1 \times 1\}$, because any element in $\Gamma$ must fix something in $P(n)$, since $P(n)^{\Gamma} \simeq \ast$, but any element in $1\times \Sigma_n$, other than the identity, cannot fix anything in $P(n)$. This implies that the only subgroups that can occur in $\sN_n$ are graphs of group homomorphisms $\rho: H\to \Sigma_n$, where $H<G$, i.e., $\Gamma$ must have the form $\Gamma_{H,\rho} = \{(h,\rho(h)) \;|\; h\in H < G, \rho: H\to \Sigma_n\}$.
\end{remark}

Based on this remark, we see that the only variable distinguishing $N_\infty$-operads is which $\rho$ are allowed (since $N_\infty$-operads require that all subgroups $H < G$ are considered). If the only $\rho$ allowed is $\rho(h) = e$ for all $h$, then we denote the resulting operad $E_\infty^G$. Its algebras do not have any non-trivial norm maps.

If all group homomorphisms $\rho$ are allowed, then the resulting operad is an $E_\infty$-$G$-operad \cite[Definition VII.1.2]{lewis-may-steinberger}, since the spaces $P(n)$ are universal spaces for principal $(G,\Sigma_n)$-bundles. Algebras over these operads have all possible norm maps, and there is a Quillen equivalence between such algebras (in $G$-spectra, when the $P(n)$ are assumed to be $(G\times \Sigma_n$)-CW complexes) and strictly commutative $G$-spectra; see Theorem \ref{thm:family-rectification}. Blumberg and Hill call these $G$-operads \textit{complete}. As the word ``genuine'' is already overused in equivariant homotopy theory, we think of these as ``strong'' $E_\infty$-operads, and we think of $E_\infty^G$ as a ``weak'' $E_\infty$-operad, but we will stick to the terminology of \cite{blumberg-hill}.
The notion of complete $N_{\infty}$-operad appears in~\cite[Section 3.1]{blumberg-hill}.
\begin{definition}
An $N_\infty$-operad $P$ is called \textit{complete} if the family $\sN_n$ corresponding to $P(n)$ is precisely the set of graph subgroups $\Gamma_{H,\rho} = \{(h,\rho(h)) \;|\; h\in H < G, \rho: H\to \Sigma_n\}$, where $H$ is any subgroup of $G$, and $\rho: H\to \Sigma_n$ is any group homomorphism.
\end{definition}

Since every $N_\infty$-operad gives rise to an associated sequence $\sN = (\sN_n)$ of families of subgroups, it is natural to ask if every sequence of families has an associated $N_\infty$-operad. The following example shows that this is not the case.

\begin{example} \label{ex:no-such-operad}
Consider a sequence of families $\sF = (\sF_i)$ and a fixed $n$, such that for each $k < n$, $\sF_{k} = \{\Gamma_{H,\rho} \}$ is the family of all graph subgroups $\Gamma_{H,\rho} < G\times \Sigma_k$, for each $H < G$ and each $\rho:H\to \Sigma_k$; and for $k\ge n$, $\sF_k = \{H \times 1\}$ consists only of graph subgroups of the trivial $\rho$. Then there cannot be a $G$-operad $P$ whose spaces $P(m)$ are universal spaces for these $\sF_m$, because any operad composition map must be $G$-equivariant, and taking fixed points will result in a map from a contractible (but non-empty) space to the empty set. For concreteness, suppose $n = n_1+n_2$, and consider $\gamma: P(2) \times P(n_1) \times P(n_2) \to P(n)$. This map is $G \times (\{e\} \times \Sigma_{n_1} \times \Sigma_{n_2})$-equivariant. Let $H < G$ and let $\rho_i: H\to \Sigma_{n_i}$ be group homomorphisms. Let $\Gamma_i < G \times \Sigma_{n_i}$ be the graph subgroups of $\rho_i$. Define $\rho: H\to \Sigma_n$ to be $\rho_1 \coprod \rho_2$, using the block inclusions of $\Sigma_{n_i}$ into $\Sigma_n$,
and let $\Gamma < G\times \Sigma_n$ be the graph subgroup of $\rho$. Then the domain of $\gamma^\Gamma$ will be $P(2) \times P(n_1)^{\Gamma_1} \times P(n_2)^{\Gamma_2} \cong \ast$, but the codomain is $P(n)^\Gamma \cong \emptyset$, contradicting the existence of the composition map $\gamma$.
\end{example}

With this example in mind, we now state the key condition relating the $\sF_n$ that will guarantee operad composition maps can exist. We then prove this condition is necessary and sufficient for the existence of $N_\infty$-operads relative to such $\sF$. The bulk of the work consists of showing that, for \textit{realizable} sequences $\sF$, cofibrant operads in the $\sF$-model structure on $G$-operads (e.g., $E_\infty^\sF$) forget to cofibrant collections in the $\sF$-model structure on $\Coll_G$ (meaning, the spaces of the operad are universal spaces for the families $\sF_n$). It is important to note that the cofibrant replacement of Com exists in any $\sF$-model structure on $G$-operads, even in situations like Example \ref{ex:no-such-operad} where there is no operad $P$ whose spaces are universal classifying spaces for the families $\sF_n$. This demonstrates that Proposition~\ref{prop:cof-operads-forget} is false without the realizability hypothesis, i.e., the property of cofibrant operads forgetting to cofibrant sequences does not come for free. One cannot side-step this failure by taking the $\sF$-cofibrant replacement of Com in $\Coll_G$, because a cofibrant replacement in the category of $G$-collections need not be an operad.

\begin{definition} \label{defn:realizable}
A sequence $\sF = (\sF_n)$ of families of subgroups of $G\times \Sigma_n$ is \textit{realizable} if, for each decomposition $n = n_1 + \dots + n_k$, the following containment is satisfied
\[
\sF_k \wr (\sF_{n_1} \times \dots \times \sF_{n_k}) \subset \sF_n,
\]
where the symbol on the left denotes a set of subgroups of $G\times \Sigma_n$ defined as follows. For every $H < G$, every $\rho_{n_i}: H \to \Sigma_{n_i}$ allowed by the family $\sF_{n_i}$, consider all block homomorphisms of the form $\rho_{n_1} \coprod \dots \coprod \rho_{n_k}: H\to \Sigma_n$. Then, for every $\rho_k: H\to \Sigma_k$ allowed by the family $\sF_k$,
% (and such that $\rho_k(H)$ only permutes blocks of the same size)
consider all group homomorphisms $\rho: H \to \Sigma_n$  of the form $\rho_{n_1} \coprod \dots \coprod \rho_{n_k}$ twisted by $\rho_k(H)$. Then, $\sF_k \wr (\sF_{n_1} \times \dots \times \sF_{n_k})$ is defined as the set of graph subgroups $\Gamma_\rho$ of the resulting $\rho$.
\end{definition}

\begin{theorem} \label{thm:N-infty-operads-exist}
A sequence $\sF = (\sF_n)$ is realizable if and only if there is an $N_\infty$-operad $P$ such that $P(n)$ is a universal classifying space for the family $\sF_n$.
\end{theorem}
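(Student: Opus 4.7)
The plan is to prove necessity and sufficiency separately. The necessity (\emph{only if}) direction essentially runs the calculation of Example \ref{ex:no-such-operad} in reverse. Given an $N_\infty$-operad $P$ realizing $\sF$, a decomposition $n = n_1 + \cdots + n_k$, a subgroup $H < G$, and homomorphisms $\rho_k\colon H \to \Sigma_k$ and $\rho_{n_i}\colon H \to \Sigma_{n_i}$ with $\Gamma_{\rho_k} \in \sF_k$ and $\Gamma_{\rho_{n_i}} \in \sF_{n_i}$, I would consider the operad composition
\[
\gamma \colon P(k) \times P(n_1) \times \cdots \times P(n_k) \longrightarrow P(n).
\]
This is equivariant for the copy of $H$ embedded diagonally in $G \times \Sigma_k \times \Sigma_{n_1} \times \cdots \times \Sigma_{n_k}$ via $h \mapsto (h,\rho_k(h), \rho_{n_1}(h), \dots, \rho_{n_k}(h))$, and after $\gamma$ the image sits in the fixed points of $P(n)$ for the graph subgroup $\Gamma_\rho$ of the twisted block sum $\rho$. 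The fixed points of the source are a product of contractible universal spaces, hence nonempty, so $P(n)^{\Gamma_\rho} \neq \emptyset$; since $P(n)$ is a universal space for $\sF_n$, this forces $\Gamma_\rho \in \sF_n$. This is exactly the realizability condition.

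For the sufficiency direction I would take $P = E_\infty^\sF$ to be a cofibrant replacement of $\Com$ in the $\sF$-model structure on $\Oper^G$ provided by Theorem \ref{thm:family-oper} and Definition \ref{defn:E-infty-F-operad}. The weak equivalence $P \to \Com$ immediately gives $P(n)^\Gamma \simeq \ast$ for every $\Gamma \in \sF_n$. The crucial step is the (to-be-proved) Proposition \ref{prop:cof-operads-forget}: when $\sF$ is realizable, every cofibrant $G$-operad forgets to an $\sF$-cofibrant $G$-collection in the sense of Definition \ref{def:F-cof}. Granting this, each $P(n)$ is cofibrant in $\Top^{G \times \Sigma_n}$ with the $\sF_n$-model structure, so by the characterization of universal classifying spaces recalled at the end of Section \ref{sec:prelim} one has $P(n)^\Gamma = \emptyset$ for $\Gamma \notin \sF_n$. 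Since every $\Gamma \in \sF_n$ is a graph subgroup, no nontrivial subgroup of $1 \times \Sigma_n$ lies in $\sF_n$, and consequently $P(n)^{1 \times K} = \emptyset$ for all nontrivial $K \le \Sigma_n$, which is equivalent to freeness of the $\Sigma_n$-action on $P(n)$. Combined with the standing hypothesis that $H \times 1 \in \sF_n$ for all closed $H < G$, this verifies all the conditions of Definition \ref{defn:N-infty} and exhibits $P$ as an $N_\infty$-operad realizing $\sF$.

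The main obstacle is Proposition \ref{prop:cof-operads-forget}. My approach is a cellular analysis of the free operad functor $F \colon \Coll_G \to \Oper^G$. The arity-$n$ piece of $F(C)$ decomposes as a coproduct over rooted trees of terms of the form $C(k) \otimes C(n_1) \otimes \cdots \otimes C(n_k)$ with $n_1 + \cdots + n_k = n$, induced up along the inclusion $(G \times \Sigma_k) \wr (\Sigma_{n_1} \times \cdots \times \Sigma_{n_k}) \hookrightarrow G \times \Sigma_n$. If each $C(i)$ is built cellularly from orbits $(G \times \Sigma_i)/\Gamma$ with $\Gamma \in \sF_i$, then the isotropy of each induced piece lies in $\sF_k \wr (\sF_{n_1} \times \cdots \times \sF_{n_k})$; the realizability hypothesis says precisely that these isotropy groups lie in $\sF_n$, so $F(C)$ is $\sF$-cofibrant whenever $C$ is. A standard filtration of pushouts along maps in the saturated class generated by $F(I)$ (where $I$ generates the cofibrations of $\Coll_G$) then propagates this through the small object argument that produces $P \to \Com$, showing that the resulting cofibrant operad forgets to an $\sF$-cofibrant $G$-collection.
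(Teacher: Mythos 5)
Your proof of the theorem takes essentially the same route as the paper: the \emph{only if} direction uses equivariance of the operad composition map $\gamma$ and contractibility of the appropriate fixed points of the source to conclude $P(n)^{\Gamma_\rho}\neq\emptyset$, and the \emph{if} direction takes $P$ to be a cofibrant replacement of $\Com$ in the $\sF$-model structure of Theorem~\ref{thm:family-oper} and invokes Proposition~\ref{prop:cof-operads-forget}. Your explicit check that the $\Sigma_n$-action is free and that $H\times 1\in\sF_n$ is a welcome unpacking of what the paper leaves implicit from the standing hypotheses on $\sF$ in Section~\ref{sec:cof-rep-operads}.

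One caveat on your sketch of Proposition~\ref{prop:cof-operads-forget}: the free $G$-operad on a collection is not just the two-level coproduct $\coprod C(k)\otimes C(n_1)\otimes\cdots\otimes C(n_k)$ induced up to $G\times\Sigma_n$; it involves trees of arbitrary depth, quotients by the tree automorphism groups $\Aut(T,c)$, and, for the actual statement needed, the pushout along $F(u)$ in the category of operads rather than just $F$ applied to a cofibrant collection. The paper handles these via the Berger--Moerdijk filtration of the cellular extension $P\to P[u]$, an induction on tree structure, and the passage to family model structures on $\Top^{G\times\Aut(T,c)}$ through Lemma~\ref{lemma:aut-family-cof}. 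Your sketch captures the key insight---realizability is exactly the condition that the isotropy groups produced by grafting land back in $\sF_n$---but a complete argument must propagate it through the automorphism quotients and arbitrary-depth trees, not just the one-level case that Definition~\ref{defn:realizable} literally addresses.
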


We will prove this theorem in a moment. First, to help the reader make sense of realizable sequences, we include a remark showing that realizable sequences satisfy the three closure properties of indexing systems discussed in \cite{blumberg-hill}. This can of course be deduced from Theorem \ref{thm:N-infty-operads-exist}, but in this remark we demonstrate it directly.

\begin{remark}\leavevmode
\begin{enumerate}
\item The coefficient system associated to a realizable sequence \cite[Definition 4.5]{blumberg-hill} (which does not require that the sequence underlies an operad) is closed under coproduct. Given $\rho_{n_1}: H\to \Sigma_{n_1}$ and $\rho_{n_2}: H\to \Sigma_{n_2}$, the coproduct graph subgroup is given by $\rho: H\to \Sigma_{n_1+n_2}$ taking $h$ to $\rho_{n_1}(h) \coprod \rho_{n_2}(h)$. This $\rho$ is an example of the sort considered in Definition \ref{defn:realizable}, where we take the trivial action $\rho_2: H\to \Sigma_2$. That that coproduct graph is in the coefficient system can be verified just as in the proof of \cite[Lemma 4.10]{blumberg-hill}, where $\Gamma_\rho$ fixed points are decomposed into $\Gamma_{\rho_{n_1}}$ and $\Gamma_{\rho_{n_2}}$ fixed points. The containment required by Definition \ref{defn:realizable} implies that $\Gamma_\rho$ fixed points are contractible as required.

\item The coefficient system associated to a realizable sequence is closed under products. Let $S$ be an admissible $H$-set of cardinality $k$ (meaning, $\Gamma_{\rho_k} \in \sF_k$ where $\rho_k$ encodes the $H$-action on $S$), and let $T$ be an admissible $H$-set of cardinality $q$. Here, one mimics \cite[Lemma 4.11]{blumberg-hill}, taking $n_1 = \dots = n_k$ in Definition \ref{defn:realizable}, and taking all $\rho_{n_i}$ to be the homomorphism associated to the $H$-set $T$. Then, the subgroup $\Gamma_{S x T}$ of \cite[Lemma 4.11]{blumberg-hill} is precisely the same as $\Gamma_\rho$ from Definition~\ref{defn:realizable}.

\item The coefficient system associated to a realizable sequence is closed under self-induction \cite[Definition 3.14]{blumberg-hill}. Let $K < H < G$, and assume $H/K$ is an admissible $H$-set. Fix an admissible $K$-set $T$ given by $\tau_q: K \to \Sigma_q$. In order to show that $H\times_K T$ is an admissible $H$-set, \cite[Lemma 4.12]{blumberg-hill} defines $k$ homomorphisms $H \to K$ via a complete set of coset representatives of $H/K$ (here $k  = |H/K|$). Blumberg and Hill then build a subgroup $\Ind(g)$ from $\rho_k: H \to \Sigma_{k}$ (using that $H/K$ is an admissible $H$-set) and from the compositions $\rho_{q}^i: H \to K \to \Sigma_{q}$ for $1\leq i \leq k$. This $\Ind(g)$ is precisely the subgroup $\Gamma_\rho$ of Definition \ref{defn:realizable}, where we take $n_1 = \dots = n_k = q$ and each $\rho_{n_i} = \rho_q^i$. Thus, the admissibility of $\Ind(g)$ follows from the containment in Definition \ref{defn:realizable}, once the $\Gamma_\rho$ fixed points are decomposed as in \cite[Lemma 4.12]{blumberg-hill}.
\end{enumerate}
\end{remark}

Now we are ready to prove Theorem \ref{thm:N-infty-operads-exist}. Our proof relies on Proposition \ref{prop:cof-operads-forget}, to be proven below.

\begin{proof}[Proof of Theorem \ref{thm:N-infty-operads-exist}]
Suppose $\sF$ is realizable. We construct $P$ as the cofibrant replacement of the Com operad in the $\sF$-model structure on $G$-operads. The $G$-operad Com has $\Com(n) = \ast$ with a trivial action, for all $n$.
Then, using the realizability hypothesis on $\sF$, Proposition \ref{prop:cof-operads-forget} proves that $P$ is $\sF$-cofibrant, which implies $P(n)$ is cofibrant in the $\sF_n$-model structure on $\Top^{G\times \Sigma_n}$. As explained in~\cite[Section 2.6]{cort_ellis} and \cite[Section 2]{fausk-equivariant-pro-spectra}, $E_{\sF_n}G$ is a cofibrant replacement of the one point space $*$ in this model structure, so $P(n)$ is weakly equivalent to the $G\times \Sigma_n$-CW complex $E_{\sF}G$ discussed in Section \ref{sec:prelim}, and has the same fixed point property, as required.

Conversely, suppose $P$ is an $N_\infty$-operad and let $\Gamma_\rho \in \sF_k \wr (\sF_{n_1} \times \dots \times \sF_{n_k})$. The composition map $\gamma: P(k) \times P(n_1) \times \dots \times P(n_k) \to P(n)$ is $G \times (\rho_k(H) \times \Sigma_{n_1} \times \dots \times \Sigma_{n_k})$-equivariant, since $\rho_k(H)$ acts by permuting blocks of the same size. Upon taking fixed points we obtain $\gamma^{\Gamma_\rho}: P(k)^{\Gamma_{\rho_k}} \times (P(n_1)^{\Gamma_{n_1}} \times \dots \times P(n_k)^{\Gamma_{n_k}})^{\rho_k(H)} \to P(n)^{\Gamma_\rho}$, following the model of the proof of \cite[Lemma 4.10 and Lemma 4.12]{blumberg-hill}. By construction, the left hand side is contractible: since taking $\rho_k(H)$-fixed points identifies certain copies of the $P(n_i)^{\Gamma_{n_i}}$, the left hand side is a product of $P(k)^{\Gamma_{\rho_k}}$ with various products of $P(n_i)^{\Gamma_{n_i}}$, which are all contractible. Hence, the right hand side cannot be empty, or it would contradict the existence of $\gamma$. It follows that $\Gamma_\rho \in \sF_n$.
\end{proof}

Our proof of Proposition \ref{prop:cof-operads-forget} proceeds following the model of \cite[Section 5]{BM03}, with the correction from  \cite[Lemma 3.1]{BM09}. We encourage the reader to proceed with a copy of \cite{BM03} on hand. The proof proceeds via a careful analysis of the free operad functor $F: \prod_n Top^{G\times \Sigma_n} \to Oper(Top^G)$. This is precisely the same functor as in \cite{BM03}, so the categorical algebra in \cite[Section 5.8]{BM03} (with the correction from \cite{BM09}) works in precisely the same way in our setting (the only difference is the model category structure, which does not enter until~\cite[Lemma 5.9]{BM03}). As in \cite[Corollary 5.2]{BM03}, it is completely formal to reduce the problem of a cofibration of operads forgetting to a cofibration of collections to the situation of a single cellular extension.
Thus, we must only prove the analogue of \cite[Proposition 5.1]{BM03} in our setting, namely: if $P$ is an $\sF$-cofibrant $G$-operad and $u$ is a cofibration in the $\sF$-model structure on $G$-collections, then a cellular extension $P \to P[u]$, defined as the pushout of $F(u)$ in $G$-operads, is an $\sF$-cofibration of operads.

Recall from Definition~\ref{def:F-cof} that a $G$-operad $P$ is $\sF$-cofibrant if the map from the initial operad $I$ forgets to a cofibration in the $\sF$-model structure on $G$-collections. Similarly, we call a map of operads an $\sF$-cofibration if it forgets to a cofibration in the $\sF$-model structure. Note that $I(n)$ is the initial object in $\Top^{G\times \Sigma_n}$ for all $n\neq 1$ and $I(1)$ is the unit of $\Top^{G\times \Sigma_1}\cong \Top^G$. Hence, each $I(n)$ is cofibrant in the $\sF_n$-model structure on $\Top^{G\times \Sigma_n}$. It follows that, if $P$ is $\sF$-cofibrant, then every space $P(n)$ is cofibrant in the $\sF_n$-model structure on $\Top^{G\times \Sigma_n}$. This observation is used in Theorem \ref{thm:N-infty-operads-exist}, to deduce that $P(n)$ is a universal classifying space for the family $\sF_n$, for the operads $P$ constructed in the theorem.
%$U(I)$ is cofibrant in the $\sF$-model structure, this implies $U(P)$ is also cofibrant in the $\sF$-model structure. 
%%

The proof by Berger--Moerdijk hinges on \cite[Lemma 5.9]{BM03}, and so most of our work will be to establish the $\sF$-model structure version of this lemma. However, since we work with family model structures, rather than projective model structures, we will need to prove that the latching maps are cofibrations in a model structure on $\Top^{G\times \Aut(T,c)}$ for all trees $T$ with colored vertices $c$. We now define this model structure, induced from the family model structure $\Top^{\sF_n}$ where $n$ is the number of leaves of $T$.

For every tree $T$ with colored vertices $c$, let $\alpha_{T,c}: \Aut(T,c) \to \Sigma_n$ be induced by the action of $\Aut(T,c)$ on the leaves, and let $\sF_n^{T,c} := (1 \times \alpha_{T,c})^{-1}(\sF_n)$ be a family of subgroups of $G \times \Aut(T,c)$. It is easy to verify that this family contains the identity subgroup, is closed under conjugation, and is closed under subgroups (or satisfies the Illman condition in the case of compact Lie $G$). Hence, there is a family model structure on $\Top^{G\times \Aut(T,c)}$, and it satisfies the pushout product axiom. We need the following lemma, which will be applied to the map $1\times \alpha: G\times \Aut(T,c)\to G\times \Sigma_n$, to prove that $-\otimes_{\Aut(T,c)} S[\Sigma_n]$ is a left Quillen functor from $\Top^{G\times \Aut(T,c)}$ to $\Top^{G\times \Sigma_n}$.

\begin{lemma}\label{lemma:aut-family-cof}
Let $\alpha: G_0 \to G_1$ be a group homomorphism, and $\sF_0$, $\sF_1$ be families of subgroups of $G_0$ and $G_1$, respectively, such that for every $H_0\in\sF_0$ we have that $\alpha(H_0)\in \sF_1$. Let $\cat{C}$ be a model category such that the family model structures $\cat{C}^{G_0}_{\sF_0}$ and $\cat{C}^{G_1}_{\sF_1}$ exist. Then the adjunction $\alpha_!: \cat{C}^{G_0}_{\sF_0} \leftrightarrows \cat{C}^{G_1}_{\sF_1}:\alpha^*$ is a Quillen pair.
\end{lemma}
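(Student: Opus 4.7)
My plan is to verify the equivalent condition that the right adjoint $\alpha^*\colon \cat{C}^{G_1}_{\sF_1} \to \cat{C}^{G_0}_{\sF_0}$ preserves fibrations and trivial fibrations. The essential observation is a fixed-point identity: for every subgroup $H_0 \le G_0$ and every $G_1$-object $X$ in $\cat{C}$, one has $(\alpha^* X)^{H_0} = X^{\alpha(H_0)}$, where $\alpha(H_0)$ is a subgroup of $G_1$ since $\alpha$ is a group homomorphism. This identity holds because the $H_0$-action on $\alpha^* X$ factors through the surjection $H_0 \twoheadrightarrow \alpha(H_0) \le G_1$, so the kernel acts trivially and the limit computing the $H_0$-fixed points coincides with the limit computing the $\alpha(H_0)$-fixed points. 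First I would record this identity explicitly, since it is the only piece of substance in the argument.

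With the identity in hand, the verification is essentially formal. Let $f\colon X \to Y$ be a fibration, respectively a trivial fibration, in $\cat{C}^{G_1}_{\sF_1}$. By the very definition of the family model structure, $f^{H_1}$ is a fibration, respectively a trivial fibration, in $\cat{C}$ for every $H_1 \in \sF_1$. To verify that $\alpha^* f$ is a fibration, respectively a trivial fibration, in $\cat{C}^{G_0}_{\sF_0}$, I must check that $(\alpha^* f)^{H_0}$ has the corresponding property in $\cat{C}$ for every $H_0 \in \sF_0$. By the identity above, $(\alpha^* f)^{H_0} = f^{\alpha(H_0)}$, and by the hypothesis $\alpha(H_0) \in \sF_1$, so $f^{\alpha(H_0)}$ has the required property. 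Thus $\alpha^*$ is right Quillen, and $(\alpha_!, \alpha^*)$ is a Quillen pair.

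The main potential obstacle would be a failure of the fixed-point identity in greater generality. In the setting assumed here, where the family model structures are defined via fixed-point functors $(-)^H$ interpreted as limits of group actions on $\cat{C}$, the identity follows immediately from the factorization of the $H_0$-action through $\alpha(H_0)$, so I do not anticipate any genuine difficulty. Notably, no hypothesis on $\alpha$ beyond being a group homomorphism is needed; the compatibility condition $\alpha(\sF_0) \subset \sF_1$ is precisely what is required for the argument to close.
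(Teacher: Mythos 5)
Your proof is correct and follows essentially the same approach as the paper: establish the fixed-point identity $(\alpha^* X)^{H_0} \cong X^{\alpha(H_0)}$ and use the hypothesis $\alpha(H_0) \in \sF_1$ to conclude that $\alpha^*$ is right Quillen. The paper phrases the check as preservation of fibrations and weak equivalences rather than fibrations and trivial fibrations, but since in the family model structure both are detected levelwise after taking fixed points, the two formulations are interchangeable here.
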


\begin{proof}
For every $H_0 \in \sF_0$, we have $X^{\alpha(H_0)} \cong (\alpha^*(X))^{H_0}$ for every $X \in \cat{C}^{G_1}$. It is enough to see that $\alpha^*$ preserves weak equivalences and fibrations. Let $f$ be a weak equivalence or fibration in $\cat{C}^{G_1}$. Then $\alpha^*(f)^{H_0} \cong f^{\alpha(H_0)}$, which is a weak equivalence or fibration in $\cat{C}$ since $\alpha(H_0) \in \sF_1$ by assumption. The conclusion follows.
\end{proof}

We are finally ready to prove Proposition \ref{prop:cof-operads-forget}. A similar result appears in \cite{bonventre-pereira}, where $\sF$ is required to be a ``weak indexing system'' (a condition related to the behavior of automorphisms of $G$-equivariant trees), rather than a realizable sequence.

\begin{proposition} \label{prop:cof-operads-forget}
Assume $\sF$ is a realizable sequence. Then any $G$-operad $P$ that is cofibrant in the $\sF$-model structure on $G$-operads is $\sF$-cofibrant.
\end{proposition}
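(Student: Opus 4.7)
The plan is to follow the Berger--Moerdijk strategy in \cite[Section 5]{BM03} (with the correction from \cite[Lemma 3.1]{BM09}), adapting each step from the projective model structure to the $\sF$-family model structures. First, I would carry out the completely formal reduction of \cite[Corollary 5.2]{BM03}, which expresses any cofibration of operads as a transfinite composite of cellular extensions $P \to P[u]$ along cofibrations $u$ of $G$-collections, so that it suffices to prove the following: if $P$ is $\sF$-cofibrant and $u \colon A \to B$ is a cofibration in the $\sF$-model structure on $\Coll_G$, then the map $P \to P[u]$ forgets to a cofibration in $\Coll_G$. This reduction is purely categorical and transfers verbatim because it only uses the free--forgetful adjunction and properties of pushouts of operads, which are independent of the chosen model structure.

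Next, I would set up the Berger--Moerdijk filtration $P = P_0 \to P_1 \to P_2 \to \cdots$ of $P[u]$ indexed by the number of ``new'' vertices (i.e., vertices labeled by the cofibration $u$) appearing in operadic trees. At the $k$-th stage, one writes $P_k \to P_{k+1}$ as a pushout, in each arity $n$, of a coproduct over isomorphism classes of suitable decorated trees $(T,c)$ with $k+1$ new vertices and $n$ leaves, of the form
\[
\bigl(P(T) \otimes_{\Aut(T,c)} \Sigma_n\bigr)\cdot L_{T,c}(u) \longrightarrow \bigl(P(T)\otimes_{\Aut(T,c)} \Sigma_n\bigr)\cdot B^{\otimes(k+1)},
\]
where $L_{T,c}(u)$ is the appropriate latching object formed from $u\colon A\to B$, and $P(T)$ denotes the labeling by $P$ of the unmarked vertices of $T$. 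The key point is that both sides carry $G\times\Aut(T,c)$-actions, and the pushout in $\Coll_G$ in arity $n$ is obtained by pushing this forward along $1\times\alpha_{T,c}\colon G\times\Aut(T,c)\to G\times\Sigma_n$.

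The main step, and the main obstacle, is the $\sF$-analogue of \cite[Lemma 5.9]{BM03}: one must show that for every $(T,c)$, the latching map
\[
\bigl(P(T)\otimes L_{T,c}(u)\bigr) \longrightarrow \bigl(P(T)\otimes B^{\otimes(k+1)}\bigr)
\]
is a cofibration in $\Top^{G\times\Aut(T,c)}$ equipped with the $\sF_n^{T,c}$-model structure. Since $P$ is $\sF$-cofibrant by hypothesis and $u$ is a cofibration in the $\sF$-model structure on $\Coll_G$, the pushout product axiom for each of the family model structures $\Top^{\sF_m}$ (Proposition~\ref{prop:family-top}) and a careful bookkeeping of the factors contributed by each vertex of $T$ reduce the claim to verifying that a suitable external product of $\sF$-cofibrations, carrying its natural $\Aut(T,c)$-action, is cofibrant in the $\sF_n^{T,c}$-model structure. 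This is exactly where the realizability hypothesis enters: for a tree $T$ decomposed as a root corolla of arity $k$ with subtrees of arities $n_1,\ldots,n_k$, the stabilizers of generating cells have the form $\Gamma_{\rho_k}\wr(\Gamma_{\rho_{n_1}}\times\cdots\times\Gamma_{\rho_{n_k}})$; these are precisely the subgroups of $G\times\Aut(T,c)$ whose image under $1\times\alpha_{T,c}$ is controlled by $\sF_k\wr(\sF_{n_1}\times\cdots\times\sF_{n_k})$, and by Definition~\ref{defn:realizable} this image lies in $\sF_n$. Iterating over the levels of $T$ confirms that the relevant isotropy groups belong to $\sF_n^{T,c}$.

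Finally, having the latching map as an $\sF_n^{T,c}$-cofibration in $\Top^{G\times\Aut(T,c)}$, I would apply Lemma~\ref{lemma:aut-family-cof} with $\alpha = 1\times\alpha_{T,c}$: its left adjoint $\alpha_!$ (which is exactly the operation $-\otimes_{\Aut(T,c)}\Sigma_n$ encoding the induction up to a $G\times\Sigma_n$-object) is left Quillen, so it sends the latching map to a cofibration in $\Top^{\sF_n}$. Assembling the results over all trees and all arities shows that each $P_k \to P_{k+1}$ forgets to a cofibration in $\Coll_G$, and passing to the transfinite composite gives that $P \to P[u]$ is an $\sF$-cofibration, completing the inductive step. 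The overall difficulty concentrates in step three above: verifying that the realizability containment is exactly the combinatorial input needed to match the tower of stabilizers of decorated trees against the families $\sF_n^{T,c}$, and checking that all categorical-algebra manipulations from \cite{BM03,BM09} interact correctly with the induction/restriction along $1\times\alpha_{T,c}$.
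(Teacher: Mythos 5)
Your proposal follows the same strategy as the paper's proof: reduce to a single cellular extension via the formal argument of [BM03, Corollary 5.2], run the Berger--Moerdijk filtration with the BM09 correction, identify the latching maps as cofibrations in the $\sF_n^{T,c}$-model structure on $\Top^{G\times\Aut(T,c)}$ (with the realizability containment $\sF_k \wr (\sF_{n_1}\times\cdots\times\sF_{n_k}) \subset \sF_n$ pulling back along $\alpha_{T,c}$ supplying exactly the needed cells), and then apply Lemma~\ref{lemma:aut-family-cof} to push forward along $1\times\alpha_{T,c}$. The paper carries out the inductive verification of the latching maps more explicitly via the three root cases (uncolored non-unitary, uncolored unitary, colored) and the careful inductive definition of $\ulu(T,c)$, whereas your description compresses this into ``pushout-product of $\sF$-cofibrations tensored with $P(T)$,'' but the mathematical content is the same.
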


\begin{proof}
The discussion above reduces us to proving that, for every $\sF$-cofibrant $G$-operad $P$, every generating cofibration $u:K\to L$ in the $\sF$-model structure on $G$-collections, and every attaching map $K\to U(P)$, that the cellular extension $P \to P[u]$, defined by the following pushout in the category of $G$-operads, is an $\sF$-cofibration:

\[
\xymatrix{
F(K) \ar[r] \ar[d] \po & F(L) \ar[d] \\
P \ar[r] & P[u].}
\]

We proceed as in \cite{BM03}. First, the categorical algebra in \cite{BM03} is independent of the choice of model structure, and of the base category where operads are taken ($\Top^G$ in our case). Thus, the filtration of $P\to P[u]$ from \cite[Section 5.11]{BM03} still holds, with the correction from \cite[Lemma 3.1]{BM09}. This means $P\to P[u]$ is a sequential colimit of maps of operads $F_{k-1}\to F_k$ where, for all $n$, level $n$ is defined by the following pushout (whose notation will be defined below) in the category of $G\times \Sigma_n$-spaces:
%% is this right?? 
%% need to replace I (unit of operads) by S^0 (unit of G-spaces)??

\[
\xymatrix{
\coprod \limits_{(T,c)\in A_k(n)} \ulu^{*}(T,c) \otimes_{\Aut(T,c)} S[\Sigma_n] \ar[r] \ar[d] & \coprod \limits_{(T,c)\in A_k(n)}  \ulu(T,c) \otimes_{\Aut(T,c)} S[\Sigma_n] \ar[d] \\
F_{k-1}(n) \ar[r] & F_k(n).
}
\]

We must prove that the maps $F_{k-1}(n) \to F_k(n)$ are cofibrations in the $\sF_n$-model structure for all $k$. To do so, we prove that the maps $\ulu^{*}(T,c) \otimes_{\Aut(T,c)} S[\Sigma_n] \to \ulu(T,c) \otimes_{\Aut(T,c)} S[\Sigma_n]$ are cofibrations in the $\sF_n$-model structure.

We now define the notation, following \cite{BM03}. Let $A_k(n)$ denote the isomorphism classes of admissible coloured trees with $n$ inputs and $k$ vertices, which are either colored or unitary. Every tree $(T,c)$ in $A_k(n)$ with a root of valence $v$ is a grafting $t_v(T_1,\dots,T_v)$ of trees $T_i$ with $n_i$ leaves, and colors induced from $(T,c)$. Let $\ulu(T,c)$ be inductively defined as $K(n) \otimes \ulv^*(T,c)$, where $\ulv^*(T,c) = \bigcup_{i=1}^v \limits \ulu(T_1,c_1) \otimes \dots \ulu^*(T_i,c_i) \otimes \dots \otimes \ulu(T_v,c_v)$, if the root of $T$ is uncolored and not unitary. Let $\ulu(T,c)$ be $K(1) \otimes \ulv^*(T,c) \cup I \otimes (\ulu(T_1,c_1) \otimes \dots \otimes \ulu(T_v,c_v))$ if the root is uncolored and unitary. Finally, let $\ulu(T,c)$ be $L(n) \otimes \ulv^*(T,c) \cup K(n)\otimes \ulu(T_1,c_1) \otimes \dots \otimes \ulu(T_v,c_v)$ if the root of $T$ is colored. Let $\ulu^*(T,c) := \colim_{c' \subsetneq c} \limits \ulu(T,c')$. The map $\ulu^*(T,c)\to \ulu(T,c)$ in the pushout above is the colimit (latching) map.

By Lemma \ref{lemma:aut-family-cof}, we must only show that the latching map is a cofibration in the family $\sF_n^{T,c}$. This will proceed by induction, using the decomposition (from \cite[Lemma 5.9]{BM03}) $\Aut(T,c) \cong (\Aut(T_1,c_1)^{m_1} \times \dots \times (\Aut(T_r,c_r)^{m_r}) \rtimes (\Sigma_{m_1} \times \dots \times \Sigma_{m_r})$, where $T$ is the grafting of $T_1^1,\dots,T_1^{m_1},T_2^1,\dots,T_2^{m_2},\dots,T_r^1,\dots,T_r^{m_r}$, $v = m_1 + \dots + m_r$ is the degree of the root, and the action of $(\Sigma_{m_1} \times \dots \times \Sigma_{m_r})$ permutes isomorphic trees. Recall that $\alpha: \Aut(T,c)\to \Sigma_n$ is the induced action on the leaves of $T$. The realizability hypothesis on $\sF$ implies that $\alpha^{-1}(\sF_v \wr (\sF_{n_1} \times \dots \times \sF_{n_v})) \subset \alpha^{-1}(\sF_n)$, i.e., all of the cells that the latching map builds in (induced up from the various $\Top^{G\times \Aut(T_i,c_i)}$ model structures) are contained in the cells used in the generating cofibrations of $\Top^{G\times \Aut(T,c)}$.

We now carry out the induction in each of the cases from \cite[Lemma 3.1]{BM09}. If the root is uncolored and not unitary, the latching map is $K(n) \otimes (A\to B)$ where $B = \ulu(T_1,c_1) \otimes \dots \otimes \ulu(T_v,c_v)$ and $A$ is a colimit of the latching diagram. We will focus first on the map $A\to B$, which we denote by $f_n^{T,c}$. Observe that $f_n^{T,c} = f_{n_1}^{T_1,c_1} \boxprod \dots \boxprod f_{n_v}^{T_v,c_v}$, where $n_i$ denotes the number of inputs to tree $T_i$. The inductive hypothesis tells us each $f_{n_i}$ is a cofibration in $\sF_{n_i}^{T_i,c_i}$. The realizability hypothesis and a simple cellular induction guarantees that $f_n^{T,c}$ is a cofibration in $\sF_n^{T,c}$, since each $f_{n_i}$ is a cellular extension using cells in $\sF_{n_i}^{T_i,c_i}$.

We now deal with the presence of $K(n)$. Every family model structure on $\Top^G$ is a topological model category by \cite[Proposition 3.11]{stephan}. Furthermore, if $K(n)$ is cofibrant in $\sF_n$ then it is cofibrant in $\Top$, since it is a cell complex built from cells of the form $(G\times \Sigma_n)/H \times i$ where $i:S^{d-1}\to D^d$. This observation is used in the proof of \cite[Proposition 3.11]{stephan}. Hence, $K(n) \otimes -$ preserves $\sF_n^{T,c}$-cofibrations.

The same proof works in the other cases from \cite[Lemma 3.1]{BM09}. If the root of $T$ is uncolored and unitary, the latching map is a union of $K(1) \times (A \to B)$ with $\ast \times (X\to Y)$. When the root of $T$ is colored, the latching map is a union of $L(n)\times (A\to B)$ with $K(n)\times (X\to Y)$. Although we have abused the notation $A\to B$ and $X\to Y$ to refer to several maps, all of these maps $A\to B$ and $X\to Y$ are induced by pushout products of $f_{n_i}$ as before, so the realizability hypothesis shows that they are cofibrations in the $\sF_n^{T,c}$ model structure. Using that the $\sF_n^{T,c}$ model structure is topological, and $K(n), L(n)$, and $\ast$ are all cofibrant spaces, all latching maps are $\sF_n^{T,c}$-cofibrations, so Lemma \ref{lemma:aut-family-cof} guarantees that $P(n)\to P[u](n)$ is an $\sF$-cofibration as required.
\end{proof}

This completes the proof of the Blumberg-Hill conjecture, and also provides us with a wealth of examples of $N_\infty$-operads.

\begin{example}
Taking only the trivial $\rho$ in each family $\sF_n$ recovers the result of \cite[Proposition~5.1]{BM03} on the operad $E_\infty^G$, when the base model category is $\Top^G$. Taking $G$ trivial recovers the result of \cite[Proposition 5.1]{BM03} on $E_\infty$-operads, when the base model category is $\Top$.
\end{example}

\begin{remark}
In this section, we restricted attention to families of graph subgroups (so that we could assemble a graph subgroup of $\sF_n$ from graph subgroups of $\sF_k, \sF_{n_1}, \dots, \sF_{n_k}$), but could have worked more generally. For any sequence $\sF = (\sF_n)$, we can combine families $\sF_a$ and $\sF_b$ to obtain families of subgroups of $G \times \Sigma_a \times \Sigma_b$ of the form $\{T \;|\; \pi_a(T) \in \sF_a, \pi_b(T) \in \sF_b\}$, where $\pi$ denotes the natural projection. 
% Reason: $\gamma(x,y) = (\pi_a(\gamma)(x),\pi_b(\gamma)(y))$
Such subgroups have the property that $(X\times Y)^T \cong X^{\pi_a(T)}\times Y^{\pi_b(T)}$, and one could attempt to generalize Definition \ref{defn:realizable} so that families of subgroups built in this way satisfy the requisite containment in order for operad composition maps to exist (thereby characterizing the spaces of an $E_\infty^\sF$-operad as universal classifying spaces). While we have not pursued this line of inquiry here, there are some simple cases where such containments are trivially satisfied. If the families $\sF_n = 1\times 1$ for all $n$ are used, then the requisite compatibility between $P(n)$ for different $n$ is automatically satisfied. The same is true if $\sF_n$ is the family of all subgroups of $G\times \Sigma_n$, for every $n$. Lastly, if the families $\sF_n$ are all subgroups of the form $1\times K$ where $K < \Sigma_n$, then this approach recovers results of Rezk in the simplial setting, e.g., \cite[Proposition 3.5.1]{rezk-phd}.
\end{remark}

\section{Admissibility of family operads} \label{sec:admissibility}

In this section we provide model structures on categories of algebras over $E_\infty^\sF$-operads, in $G$-spaces and $G$-spectra. The $G$-spectra results generalize results stated in \cite[Appendix A]{blumberg-hill}. As the proofs for $G$-spectra use results from \cite[Appendix B]{kervaire}, our results about $G$-spectra will require $G$ to be finite. We begin by recalling some terminology.

Suppose $\M$ is a $\cat{V}$-model category and $\cat{V}$ is a monoidal model category. For every operad $P$ in $\cat{V}$, there is a category of $P$-algebras in $\M$, and the usual free-forgetful adjunction $(P,U)$.

\begin{definition}
We define a map of $P$-algebras $f$ to be a weak equivalence (resp. fibration) if $U(f)$ is a  weak equivalence (resp. fibration) in $\M$. We say there is a \textit{transferred} model structure on $P$-algebras if these two classes of maps (with cofibrations defined via lifting) satisfy the model category axioms. In this case, we call $P$ \textit{admissible}.
\end{definition}

\begin{proposition} \label{prop:admissibility-G-top}
Let $G$ be a compact Lie group. For any $G$-operad $P$, the category of $P$-algebras in $\Top^G$ has a transferred model structure.
\end{proposition}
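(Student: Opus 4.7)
The plan is to invoke the transfer principle (Theorem~\ref{thm:transfer}) applied to the free--forgetful adjunction
\[
F\colon \Top^G \rightleftarrows \Alg_P(\Top^G)\colon U,
\]
where $F(X) = \coprod_{n\ge 0} P(n) \times_{\Sigma_n} X^n$. Conditions (ii) and (iii) are the easy ones and I would dispense with them first. For (ii), every object of $\Top^G$ is already fibrant, so the identity functor on $\Alg_P(\Top^G)$ serves as the fibrant replacement functor $E$. For (iii), I would build functorial path objects by cotensoring with the unit interval: set $\mathrm{Path}(A) := A^{[0,1]}$, formed in $\Top^G$. The category of $P$-algebras in $\Top^G$ is cotensored over $\Top^G$ (since $\Top^G$ is closed symmetric monoidal and $P$ is a $G$-operad), and the diagonal of $[0,1]$ together with the maps $[0,1]\to \ast$ endow $A^{[0,1]}$ canonically with a $P$-algebra structure compatible with the two endpoint evaluations $A^{[0,1]} \to A \times A$ and the diagonal $A \to A^{[0,1]}$. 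Since $U(A^{[0,1]}) = (UA)^{[0,1]}$, and $\Top^G$ is a topological model category (so $(-)^{[0,1]}$ is a path object on any fibrant, hence every, object), condition (iii) holds.

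The remaining work is condition (i), which is the main obstacle. I would argue it via two standard ingredients. First, in compactly generated weak Hausdorff spaces every object is small with respect to closed $T_1$-inclusions; this passes to $\Top^G$ since the generating (trivial) cofibrations $G/H \times i$ are again inclusions, and fixed points commute with the relevant filtered colimits. Second, I need that every map in $F(I)\cell$ (resp. $F(J)\cell$) is built from inclusions, so that the smallness of $F(X) = \coprod_n P(n) \times_{\Sigma_n} X^n$ in $\Alg_P(\Top^G)$ reduces to the smallness of $X$ in $\Top^G$ after applying $U$. For this I would use the standard filtration of a pushout $A \to A[u]$ of $P$-algebras along a cofibration $u$ in $\Top^G$: as in \cite[Section 5]{BM03} (with the correction in \cite{BM09}), $A[u]$ is the sequential colimit of maps $A_{k-1} \to A_k$ where each level is obtained as a pushout of a map of the form $\ulu^*(T,c)\otimes_{\Aut(T,c)} \Sigma_n\to \ulu(T,c)\otimes_{\Aut(T,c)} \Sigma_n$ built iteratively from $u$ using products, coproducts, and quotients by symmetric group actions, all of which preserve inclusions in $\Top^G$. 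Transfinite composition then shows that every map in $F(I)\cell$ is an inclusion, and so domains of maps in $F(I)$ (which are free $P$-algebras on compact $G$-CW complexes) are small relative to $F(I)\cell$; the identical argument works for $J$.

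The main delicate point is really only bookkeeping, namely to confirm that the filtration of the pushout of algebras from \cite{BM03} goes through without requiring any cofibrancy hypothesis on $P$ (in contrast to Section~\ref{sec:cof-rep-operads}, we are not asking that cofibrations of algebras forget to cofibrations of underlying spaces, only that they forget to maps built from inclusions, which is a much weaker property). Once (i)--(iii) are in place, Theorem~\ref{thm:transfer} yields the transferred model structure on $\Alg_P(\Top^G)$, with $UF(I)$-cell as generating cofibrations and $UF(J)$-cell as generating trivial cofibrations, and with weak equivalences and fibrations reflected by $U$, as required.
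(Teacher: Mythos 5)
Your proof is correct and follows essentially the same route as the paper, which invokes \cite[Proposition 4.1]{BM03} (itself an application of the transfer principle via the interval object and the fact that every $G$-space is fibrant) and then handles smallness exactly as you do, by observing that the relevant cell maps are inclusions. One small slip in attribution: the filtration you cite from \cite[Section 5]{BM03} is for cellular extensions of \emph{operads} along free-operad maps, not for pushouts of \emph{algebras}; the paper instead points to an argument similar to the proof of Theorem~\ref{thm:family-oper}, and the analogous algebra-level filtration (as in \cite{SS00} for monoids, or in the operadic literature generally) is what you actually need, though the substance of your argument---that each stage is built from $u$ via products, coproducts, pushouts, and quotients by symmetric group actions, all of which preserve inclusions---is exactly right.
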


\begin{proof}
The category $\Top^G$ is Cartesian and has the same interval object as $\Top$, so the conclusion follows from \cite[Proposition 4.1]{BM03}. While it is not true that all objects are small, it is true that the domains of the generating (trivial) cofibrations of $P$-algebras are small relative to the class of maps $P(I)$-cell (resp. $P(J)$-cell) where $P(-)$ is the free $P$-algebra functor. This is because such maps are inclusions, by an argument similar to the proof of Theorem \ref{thm:family-oper}.
\end{proof}

\begin{remark}
The proof also works for the $\sH$-model structure on $\Top^G$ for any family $\sH$. An alternative proof of Proposition \ref{prop:admissibility-G-top} can be obtained by using \cite[Propostion 2.3]{salvatore-wahl} and the results of \cite{BM03} applied to $\Top$.
\end{remark}

We now consider $G$-spectra, and we determine which operads $E_\infty^\sF$ have a transferred model structure on their algebras. In order to use the results from \cite{kervaire}, we must now restrict to finite $G$ for the remainder of the section. First, \cite[Theorem A.1]{blumberg-hill} states that there is a transferred model structure on algebras over any $N_\infty$-operad in $\Sp^G$ whose spaces have the homotopy type of a $G\times \Sigma_n$-CW complex. We now extend this result to certain operads $E_\infty^\sF$, and en route we record a number of observations to allow the reader to work with either the positive model structure (\cite[Theorem III.5.3]{mandell-may-equivariant}) or the positive complete model structure on $\Sp^G$. The positive complete model structure was introduced in \cite[Section B.4.1]{kervaire}, to fix a gap in \cite{mandell-may-equivariant}, as pointed out in \cite[Remark B.119]{kervaire}. Note that admissibility results for $N_\infty$-operads are false in general for non-positive model structures on $\Sp^G$.

We begin with commutative monoids, to sketch the main ideas of  \cite[Appendix~B]{kervaire}, and because we will need transferred model structures on commutative monoids in Section \ref{sec:rectification}. In \cite{white-commutative-monoids}, the second author introduced the {\emph commutative monoid axiom}, and proved that it implies that there is a transferred model structure on commutative monoids. Later, in \cite{white-yau1}, the second author and Donald Yau generalized and improved this axiom, to prove admissibility results for general colored operads. With that work in mind, we now define:

\begin{definition}
A model category is said to satisfy the \textit{generalized commutative monoid axiom} if, for all trivial cofibrations $f$, maps of the form $f^{\boxprod n}/\Sigma_n$ are contained in a class that is closed under transfinite composition, pushout, and smashing with arbitrary objects, and which is contained in the weak equivalences.
\end{definition}

\cite[Lemma B.108]{kervaire} proves that the positive complete model structure on $\Sp^G$ satisfies this generalization of the commutative monoid axiom (the maps $f^{\boxprod n}/\Sigma_n$ are trivial $h$-cofibrations), and \cite[Proposition B.131]{kervaire} uses this to transfer a model structure to commutative monoids. The proof of \cite[Lemma B.108]{kervaire} requires what is called the \emph{rectification axiom} (for $\Sp^G$) in \cite{white-commutative-monoids}, i.e., for all cofibrant $X$, the map $(E_G \Sigma_n)_+ \wedge X^{\wedge n} \to X^{\wedge n}/\Sigma_n$ is a weak equivalence.

The proof of the rectification axiom involves proving that the functors $(E_G \Sigma_n)_+ \wedge (-)^{\times n}$ preserve trivial cofibrations between cofibrant objects. To prove this requires writing the $G\times \Sigma_n$-CW complex $E_G \Sigma_n$ as a homotopy colimit of transitive $G\times \Sigma_n$-sets which are $\Sigma_n$-free (\cite[Lemma B.114]{kervaire}). While the verification of the rectification axiom in \cite{kervaire} is set in the positive complete model structure, it is a consequence that the rectification axiom, and hence the generalized commutative monoid axiom, also hold for the positive stable model structure \cite[Theorem III.5.3]{mandell-may-equivariant}, as we now show (following the phrasing of \cite{kervaire}). This statement is also required in \cite[Equation A.5]{blumberg-hill} in order to transfer a model structure to algebras over an $N_\infty$-operad.

\begin{proposition} \label{prop:rect-axiom-for-positive}
For all positive cofibrant $G$-spectra $X$, for all indexing sets $I$, and for all $\Sigma$ acting on $I$, the map $(E_G \Sigma)_+ \wedge_{\Sigma} X^{\wedge I} \to X^{\wedge I}/\Sigma$ is a positive stable weak equivalence. In particular, the map $(E_G \Sigma_n)_+ \wedge_{\Sigma_n} X^{\wedge n} \to X^{\wedge n}/\Sigma_n$ is a weak equivalence.
\end{proposition}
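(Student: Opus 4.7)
The plan is to reduce this to the analogous statement in the positive complete model structure of \cite[Section B.4.1]{kervaire}, which is built into the proof of \cite[Lemma B.108]{kervaire}. Concretely, the needed comparison rests on two facts: (1) the positive stable and positive complete model structures on $\Sp^G$ share the same class of weak equivalences, namely the stable equivalences; and (2) every positive cofibrant $G$-spectrum is also positive complete cofibrant.

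First I would establish (2). The positive complete model structure is obtained from the positive stable model structure by enlarging the set of generating cofibrations, roughly by allowing maps $F_V i$ for all positive-dimensional $G$-representations $V$ in a complete universe, rather than merely for the trivial representations $\R^n$. Because the generating set grows, the class of cofibrations grows as well, so every positive cofibration is a positive complete cofibration and in particular every positive cofibrant object is positive complete cofibrant. This is the feature that motivated \cite[Remark B.119]{kervaire} to introduce the complete variant in the first place: it provides enough cofibrant objects for symmetric power constructions to behave correctly. The claim (1) is essentially tautological from the construction of the two structures, since both are built so that stable equivalences are the weak equivalences.

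With these in hand, the proof is direct: given a positive cofibrant $X$, it is positive complete cofibrant by (2), so the argument of \cite[Lemma B.108]{kervaire}, which decomposes $E_G\Sigma$ as a homotopy colimit of transitive $\Sigma$-free $(G\times\Sigma)$-sets (via \cite[Lemma B.114]{kervaire}) and analyzes each orbit summand via $h$-cofibrancy arguments, shows that the natural map $(E_G\Sigma)_+ \wedge_\Sigma X^{\wedge I} \to X^{\wedge I}/\Sigma$ is a positive complete weak equivalence. By (1), this same map is a positive stable weak equivalence. The ``in particular'' clause is the specialization to $I=\{1,\ldots,n\}$ with the natural $\Sigma_n$-action.

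The main obstacle is making the comparison in (2) fully rigorous from the material in \cite[Appendix B]{kervaire}, since the identity Quillen equivalence between the positive stable and positive complete structures is used implicitly but not spelled out there. If a direct verification is preferred, an alternative is to rerun the argument of \cite[Lemma B.108 and Lemma B.114]{kervaire} in the positive stable setting, checking at each step that the required $h$-cofibrancy and contractibility properties of the orbit summands $((G\times\Sigma)/\Gamma_\rho)_+ \wedge_\Sigma X^{\wedge I}$ depend only on $X$ being positive cofibrant, not on the extra cofibrancy provided by the complete structure.
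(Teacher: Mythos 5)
Your overall strategy is exactly the one the paper uses: show that every positive cofibrant object is positive complete cofibrant, observe that the two model structures share the class of genuine stable weak equivalences, and then invoke the HHR result in the positive complete setting (the paper cites \cite[Proposition B.116]{kervaire} directly rather than re-sketching the homotopy colimit argument from Lemmas B.108 and B.114, but this is a cosmetic difference). So the proof outline is correct.

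However, your justification of step (2) rests on a mischaracterization of the two model structures, and this is a real gap. You write that the positive complete structure enlarges the generating set ``by allowing maps $F_V i$ for all positive-dimensional $G$-representations $V$ in a complete universe, rather than merely for the trivial representations $\R^n$.'' This is not what distinguishes them: the positive stable model structure of \cite[Theorem III.5.3]{mandell-may-equivariant} already uses $F_V$ for all $G$-representations $V$ in a complete universe satisfying $V^G \neq 0$. The genuine difference is that the positive complete structure of \cite[Definition B.4.1]{kervaire} has generating cofibrations $G_+ \wedge_H F_V i$ for \emph{all} $H$-representations $V$ with $V^H \neq 0$, whereas the Mandell--May cells $F_V((G/H)_+ \wedge i) \cong G_+ \wedge_H F_{\res_H V} i$ involve only restrictions of $G$-representations, with positivity measured by $V^G \neq 0$ rather than by $(\res_H V)^H \neq 0$. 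The containment you need---every positive generating cofibration is a positive complete generating cofibration---then follows from the elementary observation that $V^G \neq 0$ implies $(\res_H V)^H \supseteq V^G \neq 0$, so restricted $G$-representations that are positive in the Mandell--May sense are automatically positive in the HHR sense. Without this comparison of the positivity conditions, the bare assertion that ``the generating set grows'' is unsupported, and in fact does not follow from the (incorrect) description you gave. Once this step is repaired, the rest of your argument goes through as written.
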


\begin{proof}
Every generating cofibration in the positive model structure on $\Sp^G$ is a positive complete cofibration, since the consideration of all $H$-representations $V$ for every $H < G$ includes the consideration of all $G$-representations (and the positivity condition for $G$-representations matches the complete positivity condition for $H$-representations when $H=G$). Thus, any positive cofibrant $X$ must be positive complete cofibrant, because it's built from a subcollection of the generating cells corresponding only to those where $H=G$. Next, these two model categories have the same weak equivalences, namely the genuine stable equivalences. Thus, for any positive cofibrant $X$, the map $(E_G \Sigma)_+ \wedge_{\Sigma} X^{\wedge I} \to X^{\wedge I}/\Sigma$ is a positive complete stable weak equivalence \cite[Proposition B.116]{kervaire}, hence a positive stable weak equivalence.
\end{proof}

\begin{remark} \label{remark:rect-for-general-N-infty}
For any $N_\infty$-operad $P$, the spaces $P(n)$ can be decomposed into a homotopy colimit just as in the case for $E_G \Sigma_n$ (the $n^{th}$ space of a \textit{complete} $N_\infty$-operad). It follows that the analogues of \cite[Proposition B.112]{kervaire} and \cite[Lemma B.114]{kervaire} hold with $P(n)$ in place of $E_G \Sigma_n$, and hence that $P$-algebras have a transferred model structure with respect to either the positive or positive complete model structure on $\Sp^G$.
\end{remark}

We now prove the $\sH$-version of \cite[Theorem A.1]{blumberg-hill}, where $\sH$ is a family of subgroups of $G$. We begin with a definition, of a type of operad that arises naturally when studying the $\sH$-family model structures on spaces and spectra. Such model structures often arise in work related to the Baum--Connes and Farrell--Jones conjectures \cite{luck-survey-classifying-families}. We begin with a definition, of a class of operads that parameterizes partial multiplications as in Example \ref{ex:restrictions}, but with $\Sigma_n$-freeness like in Definition \ref{defn:N-infty}.

\begin{definition} \label{defn:H-N-infty}
Fix a family $\sH$ of subgroups of $G$. A $G$-operad $P$ is called an \textit{$\sH$-$N_\infty$-operad} if $P(n)$ is a universal classifying space for a family $\sN_n(P)$ of subgroups of $G\times \Sigma_n$ containing all $H \times 1$ for $H\in \sH$, and intersecting $1\times \Sigma_n$ trivially. Furthermore, call $P$ a \textit{complete $\sH$-$N_\infty$-operad} if the families $\sN_n(P)$ consist of graphs for all $H\in \sH$ and all $\rho: H\to \Sigma_n$.
\end{definition}

\begin{remark}
The proof of the Blumberg-Hill conjecture in Section \ref{sec:cof-rep-operads} also proves that $\sH$-$N_\infty$-operads exist for any $\sH$-realizable sequence, defined just as in Definition \ref{defn:realizable}, but only with reference to $H\in \sH$. We leave the details to the reader.
\end{remark}

\begin{proposition} \label{prop:family-admissible}
Let $\sH$ be a family of subgroups of $G$, and let $P$ be an $\sH$-$N_\infty$-operad whose spaces have the homotopy type of $G\times \Sigma_n$-CW complexes. Then $P$ is admissible for the positive complete (hence for the positive) $\sH$-model structure $\Sp^\sH$.
\end{proposition}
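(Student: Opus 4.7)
The plan is to verify the hypotheses of the transfer principle (Theorem \ref{thm:transfer}) for the free-forgetful adjunction
$$P \colon \Sp^\sH \rightleftarrows \Alg_P(\Sp^\sH) \colon U.$$
Smallness of the domains of $P(I)$ and $P(J)$ is automatic, since the free $P$-algebra functor is built from the generating cells of $\Sp^\sH$ by pushouts, transfinite compositions, and quotients by $\Sigma_n$-actions, all of which preserve inclusions. The only nontrivial task is to show that every relative $P(J)$-cell complex is a weak equivalence in $\Sp^\sH$, where $J$ denotes the set of generating trivial cofibrations of $\Sp^\sH$.

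I would reduce this to the following local claim: for every $P$-algebra $A$, every $j\colon K\to L$ in $J$, and every attaching map $P(K)\to A$, the pushout $A\to A[j]$ in $\Alg_P(\Sp^\sH)$ is a weak equivalence. Following the filtration of \cite[Appendix A]{blumberg-hill} (or \cite[Section 5]{BM03}), $A\to A[j]$ is a sequential colimit of maps $A_{n-1}\to A_n$, each realized as a pushout along a map of the form
$$P(n)\wedge_{\Sigma_n} Q^{n-1}_n(j) \longrightarrow P(n)\wedge_{\Sigma_n} L^{\wedge n}$$
smashed with an appropriate symmetric power of $A$, where $Q^{n-1}_n(j)$ denotes the canonical domain of the iterated pushout product $j^{\boxprod n}$.

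The heart of the argument is to show that each such map is a trivial $h$-cofibration in $\Sp^\sH$, so that the generalized commutative monoid axiom identifies the sequential colimit as a weak equivalence. Since $P$ is an $\sH$-$N_\infty$-operad whose spaces have the $G\times\Sigma_n$-CW homotopy type, and every isotropy $\Gamma\in \sN_n(P)$ intersects $1\times \Sigma_n$ trivially, $P(n)$ decomposes as a homotopy colimit of $\Sigma_n$-free cells $(G\times\Sigma_n)/\Gamma$ in the sense of \cite[Lemma B.114]{kervaire}. Replacing $E_G\Sigma_n$ by $P(n)$ throughout the argument of \cite[Proposition B.116]{kervaire} (cf.\ Remark \ref{remark:rect-for-general-N-infty}) yields the rectification equivalence $P(n)\wedge_{\Sigma_n} X^{\wedge n}\simeq X^{\wedge n}/\Sigma_n$ for positive complete cofibrant $X$, which implies that the filtration maps are trivial $h$-cofibrations.

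The main obstacle is transferring this rectification statement from the positive complete model structure to the family model structure $\Sp^\sH$. Following the strategy of Proposition \ref{prop:rect-axiom-for-positive}, I would observe that each generating cell of the positive $\sH$-model structure has the form $F_V((G/H\wedge S^{n-1})_+)\to F_V((G/H\wedge D^n)_+)$ with $H\in \sH$, and such cells already appear among the generating cells of the positive complete model structure; hence any positive $\sH$-cofibrant object is positive complete cofibrant. Since weak equivalences in $\Sp^\sH$ are detected on $H$-fixed points for $H\in \sH$, and every positive complete stable equivalence is detected on all $H$-fixed points, the rectification statement established in the positive complete setting descends to $\Sp^\sH$. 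This verifies the generalized commutative monoid axiom in $\Sp^\sH$, closes the filtration argument, and yields the desired transferred model structure on $\Alg_P(\Sp^\sH)$.
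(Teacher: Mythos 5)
Your overall strategy (smallness plus showing that the free $P$-algebra functor takes generating trivial cofibrations to weak equivalences, proved by decomposing $P(n)$ into $\Sigma_n$-free cells) is the same as the paper's, but the key step goes wrong. You invoke the rectification equivalence $P(n)_+\wedge_{\Sigma_n}X^{\wedge n}\simeq X^{\wedge n}/\Sigma_n$ (the analogue of \cite[Proposition B.116]{kervaire}) and claim it holds after replacing $E_G\Sigma_n$ by $P(n)$. That statement is \emph{false} for a non-complete $\sH$-$N_\infty$-operad: the two sides have different fixed points precisely when $P$ lacks some norms, which is why Lemma~\ref{lemma:rectification-property-family}(iii) carries a completeness hypothesis. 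Remark~\ref{remark:rect-for-general-N-infty} only asserts that the cellular decomposition results \cite[Proposition B.112, Lemma B.114]{kervaire} carry over, not B.116. What the argument actually needs is the weaker statement, Lemma~\ref{lemma:rectification-property-family}(i): the functor $P(n)_+\wedge_{\Sigma_n}(-)^{\wedge n}$ preserves trivial cofibrations between cofibrant objects. That holds for every $\sH$-$N_\infty$-operad (it uses only the $\Sigma_n$-free $G\times\Sigma_n$-CW structure of $P(n)$, not any comparison to the strict quotient) and suffices to show the filtration maps are trivial $h$-cofibrations. Relatedly, the appeal to the generalized commutative monoid axiom is misplaced: that axiom controls the strict quotients $f^{\boxprod n}/\Sigma_n$, which occur in the filtration of free $\Com$-algebras, not the homotopy quotients $P(n)\wedge_{\Sigma_n}(-)$ occurring in the filtration of free $P$-algebras.

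Two smaller points. First, Theorem~\ref{thm:transfer} is the path-object/Quillen-style transfer, whose hypotheses are smallness, a fibrant replacement functor, and functorial path objects; it does not ask you to check that $P(J)$-cell maps are weak equivalences. Since fibrant replacement is nontrivial in the stable setting, the paper instead appeals to \cite[Proposition 5.13]{MMSS} (cofibration hypothesis plus transfinite compositions of pushouts of $P(J)$ are weak equivalences), which is the criterion you in fact verify; the citation should be corrected. Second, the paper sidesteps the enveloping-operad filtration $A_{n-1}\to A_n$ entirely, using instead the Blumberg--Hill trick: show $P(A)\to P(B)$ is a trivial $h$-cofibration in $\Alg_P$, then use that cobase changes and transfinite compositions of trivial $h$-cofibrations remain trivial $h$-cofibrations \cite[Theorem 8.12]{MMSS}. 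Your filtration route can be made to work, but the informal description "smashed with an appropriate symmetric power of $A$" should be replaced by the enveloping operad $P_A$; once corrected, the same Lemma~\ref{lemma:rectification-property-family}(i) input finishes the argument.
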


We begin the proof with a lemma, generalizing \cite[Lemma B.114, Proposition B.112, and Proposition B.108]{kervaire} to the setting of $\Sp^\sH$ and $\sH$-$N_\infty$-operads. Note that, for the third result, taking $\sH = \{e\}$ recovers \cite[Lemma 15.5]{MMSS}. Note that taking $\sH = \{H < G\}$ recovers \cite[Equation A.5]{blumberg-hill} (correcting an error in the statement of \cite[Lemma III.8.4]{mandell-may-equivariant}).

\begin{lemma} \label{lemma:rectification-property-family}
Let $\sH$ be a family of subgroups of $G$, and give $\Sp^\sH$ either the positive or positive complete model structure. Let $P$ be an $\sH$-$N_\infty$-operad having the homotopy type of $G\times \Sigma_n$ CW-complexes.
\begin{itemize}
\item[{\rm (i)}] The functors $P(n)_+ \wedge_{\Sigma_n} (-)^{\times n}$ preserve trivial cofibrations between cofibrant objects (in the $\Sp^\sH$-model structure).
\item[{\rm (ii)}] The functors $P(n)_+ \wedge_{\Sigma_n} (-)^{\times n}$ preserve weak equivalences between cofibrant objects (in the $\Sp^\sH$-model structure).
\item[{\rm (iii)}] If $P$ is a complete $\sH$-$N_\infty$-operad and $X$ is cofibrant in $\Sp^\sH$, then the natural map $P(n)_+ \wedge_{\Sigma_n} X^{\wedge n} \to X^{\wedge n}/\Sigma_n$ is a weak equivalence in $\Sp^\sH$.
\end{itemize}

\end{lemma}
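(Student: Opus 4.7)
The plan is to adapt \cite[Appendix B]{kervaire} to the family setting by replacing $G$-fixed points with $K$-fixed points for $K\in \sH$ throughout. The structural ingredient, flagged in Remark \ref{remark:rect-for-general-N-infty}, is that any $\sH$-$N_\infty$-operad $P$ has each $P(n)$ weakly equivalent to a $G\times\Sigma_n$-CW complex whose cells have isotropy in $\sN_n(P)$, i.e., of the form $\Gamma_{H,\rho}$ with $H\in\sH$ and $\rho\colon H\to\Sigma_n$. Since each such graph intersects $1\times\Sigma_n$ trivially, there is a canonical identification $(G\times\Sigma_n)/\Gamma_{H,\rho}\cong G\times_H \Sigma_n$, with $H$ acting on $\Sigma_n$ via $\rho$. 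We work throughout in the positive (or positive complete) $\sH$-model structure, so that the analogue of Proposition \ref{prop:rect-axiom-for-positive} is available to each subgroup $K\in\sH$.

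For (i) and (ii), after cofibrantly replacing $P(n)$ by such a CW complex and using that $P(n)_+\wedge_{\Sigma_n}(-)^{\wedge n}$ commutes with pushouts and transfinite compositions, a standard cellular induction reduces the problem to showing that for each graph $\Gamma_{H,\rho}\in\sN_n(P)$, the functor $((G\times\Sigma_n)/\Gamma_{H,\rho})_+\wedge_{\Sigma_n}(-)^{\wedge n}$ preserves trivial cofibrations between cofibrant objects. Via the identification above, this functor becomes $X\mapsto G_+\wedge_H (X^{\wedge n})_\rho$, where $H$ permutes the $n$ smash factors through $\rho$. Decomposing $\{1,\dots,n\}$ into $H$-orbits $O_1,\dots,O_r$ of size $n_i$ with stabilizers $K_i\le H$ yields
\[
(X^{\wedge n})_\rho \;\cong\; \bigwedge_{i=1}^{r}\bigl(H_+\wedge_{K_i} X^{\wedge n_i}\bigr),
\]
on which each $K_i$ acts freely by permutation of the $n_i$ smash factors. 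Since $\sH$ is closed under subgroups, each $K_i\in\sH$, the induction $G_+\wedge_H(-)$ is left Quillen from $\Sp^H$ to $\Sp^\sH$, and each factor is controlled by Proposition \ref{prop:rect-axiom-for-positive} applied to $K_i$ in place of $G$, combined with the pushout product axiom. Part (ii) then follows from (i) by Ken Brown's lemma.

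For (iii), assume $P$ is complete, so that $\sN_n(P)$ consists of \emph{all} graph subgroups $\Gamma_{H,\rho}$ with $H\in \sH$ and $\rho\colon H\to \Sigma_n$. To show that the natural map $P(n)_+\wedge_{\Sigma_n}X^{\wedge n}\to X^{\wedge n}/\Sigma_n$ is an $\sH$-equivalence, check the induced map on $K$-fixed points for each $K\in\sH$. Both sides admit parallel decompositions indexed by $\Sigma_n$-conjugacy classes of homomorphisms $\rho\colon K\to\Sigma_n$: on the right, via a tom~Dieck-style splitting of symmetric-power fixed points by orbit type; on the left, via the CW filtration of $P(n)$ together with the fact that $P(n)^{\Gamma_{K,\rho}}\simeq\ast$ precisely because $P$ is complete. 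Matching summand by summand yields the equivalence, following \cite[Proposition B.116]{kervaire} mutatis mutandis. The main obstacle is verifying that the tom~Dieck-type splitting on $X^{\wedge n}/\Sigma_n$ computes correctly in the positive $\sH$-model structure, which in turn requires that positive $\sH$-cofibrancy of $X$ descend under restriction to each $K\in\sH$ appearing in the decomposition; this can be checked directly on generating cells, since restriction of a generating cell $G/H\times i$ of $\Sp^\sH$ to $K\le H$ decomposes into cells of the required form, and the positivity condition is preserved by restriction of $G$-representations to $K$-representations.
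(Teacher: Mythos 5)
Your overall strategy for (i) and (ii) --- cofibrantly replace $P(n)$ by a $G\times\Sigma_n$-CW complex whose cells have isotropy in $\sN_n(P)$, reduce by cellular induction to the single cell functor $((G\times\Sigma_n)/\Gamma_{H,\rho})_+\wedge_{\Sigma_n}(-)^{\wedge n}\cong G_+\wedge_H N^T(-)$, and close (ii) with Ken Brown's lemma --- is the same route the paper takes, which delegates to the proof of \cite[Lemma~B.114]{kervaire} with $P(n)$ in place of $E_G\Sigma_n$. However, your explicit decomposition of the cell functor is wrong. Decomposing the $H$-set $T$ determined by $\rho$ into orbits $O_i\cong H/K_i$ of size $n_i$ gives a smash of \emph{norms},
\[
N^T(X)\;\cong\;\bigwedge_{i=1}^{r} N_{K_i}^{H}\,\res_{K_i}(X),
\]
whereas your formula $\bigwedge_{i}\bigl(H_+\wedge_{K_i}X^{\wedge n_i}\bigr)$ replaces each norm by an \emph{induction}. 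These are not the same functor: after distributing, your right-hand side is a wedge of $\prod_i n_i$ copies of $X^{\wedge n}$ rather than a single twisted power of $X$, and the accompanying claim that ``each $K_i$ acts freely by permutation of the $n_i$ smash factors'' does not parse, since $K_i$ is a point stabilizer in $O_i$. As a consequence you then reach for the wrong ingredient: Proposition~\ref{prop:rect-axiom-for-positive} compares homotopy orbits to strict orbits (the rectification axiom) and is the input for (iii), not (i). What (i) actually requires is that the norm functors $N_{K_i}^{H}$ carry (trivial) cofibrations between cofibrant objects to (trivial) cofibrations, which is the content of \cite[Propositions~B.89 and~B.96]{kervaire}, and is exactly what the cell-by-cell argument of \cite[Lemma~B.114]{kervaire} is built on. With the norm decomposition and those citations in place, your argument agrees with the paper's.

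For (iii), your tom~Dieck-type splitting of $K$-fixed points for $K\in\sH$ is in the spirit of what \cite[Proposition~B.116]{kervaire} accomplishes, but the paper simply quotes the $\sH$-fixed-point analogue of \cite[Lemma~B.108 and Proposition~B.116]{kervaire}, using completeness to guarantee $P(n)^{\Gamma_{K,\rho}}\simeq\ast$ for every graph subgroup with $K\in\sH$, and observes that the conclusion degrades to a weak equivalence only in $\Sp^{\sH}$. The ``main obstacle'' you flag --- justifying the splitting in the positive $\sH$-model structure --- is precisely where the real content lies; carrying it out would amount to reproving the relevant part of \cite[Proposition~B.116]{kervaire} rather than citing it, so it is cleaner to follow the paper and quote the HHR result.
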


\begin{proof}
The proof of \cite[Lemma B.114]{kervaire} goes through with $P(n)$ instead of $E_G \Sigma_+$, using the equivariant cellular filtration of $P(n)$, i.e., using that $P(n)$ is cofibrant in the $\sF$-model structure on $\Top^\sF$, where $\sF$ is the family underlying $P$. This proves the first assertion. The second assertion is simply Ken Brown's lemma. The third assertion is proven just as in the proof of \cite[Lemma B.108]{kervaire}, using the completeness assumption for the analogue of \cite[Proposition B.116]{kervaire} (which now only yields a weak equivalence in $\Sp^\sH$). 
\end{proof}

We now prove Proposition \ref{prop:family-admissible}.

\begin{proof}
The proof follows the approach of \cite[Appendix B]{kervaire} as summarized above, with a clever trick from \cite[Proposition A.1]{blumberg-hill}. For every generating trivial cofibration $j:A\to B$ in $\Sp^\sH$, and any map $A \to U(X)$ in $\Sp^G$, we analyze the pushout morphism $f:X\to Y$
\begin{equation} \label{diagram:pushout}
\xymatrix{
P(A) \ar[r] \ar[d] & P(B) \ar[d] \\
X \ar[r] & Y}
\end{equation}
in the category of $P$-algebras. By \cite[Proposition 5.13]{MMSS}, what must be shown is that maps in $P(I)$ and $P(J)$ satisfy the cofibration hypothesis and that transfinite compositions of maps of the form $X\to Y$ above are weak equivalences. The first part is easy, using the coproduct decomposition of $P(A)\to P(B)$ and using \cite[Proposition B.89]{kervaire}. For the second part, we follow \cite{blumberg-hill} and show that $P(A) \to P(B)$ is a trivial $h$-cofibration \cite[Definition A.60]{kervaire} in the category of $P$-algebras. It follows that transfinite compositions of pushouts of such maps are trivial $h$-cofibrations (\cite[Theorem 8.12]{MMSS}). We deduce that $P(A)\to P(B)$ is a weak equivalence from Lemma \ref{lemma:rectification-property-family}(i), using the coproduct decomposition of $P(-)$, and the fact that the domains of the generating trivial cofibrations in $\Sp^\sH$ are cofibrant. 
\end{proof}

As a consequence of the proof, we also have:

\begin{corollary}
Both the positive complete and the positive model structure on $\Sp^\sH$ satisfy the generalized commutative monoid axiom, and maps of the form $f^{\boxprod n} /\Sigma_n$ are contained in the trivial $h$-cofibrations (trivial with respect to the $\sH$-model structure).
\end{corollary}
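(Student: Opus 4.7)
The plan is to distill the argument already given in the proof of Proposition \ref{prop:family-admissible} and to invoke the rectification properties of Lemma \ref{lemma:rectification-property-family}. Two things need to be checked: (a) that for every trivial cofibration $f$ in $\Sp^\sH$ the map $f^{\boxprod n}/\Sigma_n$ is a trivial $h$-cofibration, and (b) that the class of trivial $h$-cofibrations in $\Sp^\sH$ is closed under transfinite composition, pushout, and smashing with arbitrary objects, and is contained in the weak equivalences. Together, (a) and (b) yield the generalized commutative monoid axiom with this class serving as the ambient class of trivial maps.

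For (a), I would first reduce to the case that $f$ is a generating trivial cofibration, so that its source and target are cofibrant. Next, invoke the $\sH$-analogue of Theorem \ref{thm:N-infty-operads-exist} to pick a complete $\sH$-$N_\infty$-operad $P$ with $P(n)$ of the homotopy type of $G\times \Sigma_n$-CW complexes (the realizability hypothesis is automatic for the sequence of all graph subgroups $\Gamma_\rho$ with $\rho \colon H \to \Sigma_n$ and $H \in \sH$). Following the filtration in \cite[Lemma B.108]{kervaire} and \cite[Proposition A.1]{blumberg-hill}, the $n^{\rm th}$ layer of $P(A) \to P(B)$ is built from $P(n)_+ \wedge_{\Sigma_n} f^{\boxprod n}$, which is a trivial cofibration in $\Sp^\sH$ by Lemma \ref{lemma:rectification-property-family}(i). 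The natural projection $P(n)_+ \wedge_{\Sigma_n} X^{\wedge n} \to X^{\wedge n}/\Sigma_n$ is a weak equivalence for cofibrant $X$ by Lemma \ref{lemma:rectification-property-family}(iii); assembling the comparison square
\[
\xymatrix{
P(n)_+ \wedge_{\Sigma_n} A^{\wedge n} \ar[r] \ar[d] & P(n)_+ \wedge_{\Sigma_n} B^{\wedge n} \ar[d] \\
A^{\wedge n}/\Sigma_n \ar[r] & B^{\wedge n}/\Sigma_n
}
\]
and applying two-out-of-three forces $f^{\boxprod n}/\Sigma_n$ to be a weak equivalence. That it is also an $h$-cofibration is standard: $h$-cofibrations are closed under pushout products, so $f^{\boxprod n}$ is a $G\times \Sigma_n$-equivariant $h$-cofibration, and $h$-cofibrations are preserved by orbits under compact Lie group actions such as $(-)/\Sigma_n$.

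For (b), the closure properties of $h$-cofibrations in orthogonal spectra are purely topological statements developed in \cite[Appendix A]{kervaire}, and they carry over to $\Sp^\sH$ verbatim because the underlying category, enrichment, tensoring, and smash product are unchanged. Weak equivalences in $\Sp^\sH$ are detected by the fixed-point functors $(-)^H$ for $H\in \sH$, and these commute with the relevant colimits, so the subclass of trivial $h$-cofibrations inherits the required closure properties. The hard part will be the bookkeeping in step (a): verifying that the cofibrancy hypotheses of Lemma \ref{lemma:rectification-property-family} persist through the inductive stages of the filtration, and that the rectification comparison map is compatible with the identification of $f^{\boxprod n}/\Sigma_n$ as the $n^{\rm th}$ layer on both the $P$-decorated side and the commutative side, exactly as in the proof of \cite[Lemma B.108]{kervaire} but now adapted to the family fixed-point model structure.
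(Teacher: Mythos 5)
Your overall strategy — read the generalized commutative monoid axiom off the $\sH$-adapted versions of HHR's lemmas (Lemma \ref{lemma:rectification-property-family}) following the filtration of \cite[Lemma B.108]{kervaire} — is the same as the paper's, which presents the corollary as a direct by-product of the proof of Proposition \ref{prop:family-admissible}. However, two of your intermediate steps, as written, do not prove what they claim.

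First, the displayed comparison square has $A^{\wedge n}/\Sigma_n \to B^{\wedge n}/\Sigma_n$ along the bottom, and the two-out-of-three argument therefore establishes that $\Sym^n(f) = f^{\wedge n}/\Sigma_n$ is a weak equivalence — not that $f^{\boxprod n}/\Sigma_n$ is. The latter map has domain $Q^n(f)/\Sigma_n$, where $Q^n(f)$ is the source of the iterated pushout product, which sits strictly between $A^{\wedge n}$ and $B^{\wedge n}$. To reach $f^{\boxprod n}/\Sigma_n$ you must apply the rectification comparison to $Q^n(f)$ itself (or inductively to each stage $Q^n_i(f)$ of its $\Sigma_n$-equivariant cellular filtration); but Lemma \ref{lemma:rectification-property-family}(iii) as stated concerns only objects of the form $X^{\wedge n}$, so it does not immediately apply. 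This is precisely the content of HHR's Proposition B.116 together with the cellular induction in Lemma B.108, and one has to re-run that induction in $\Sp^{\sH}$ rather than deduce it from a single endpoint square.

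Second, your invocation of Lemma \ref{lemma:rectification-property-family}(i) to conclude that $P(n)_+\wedge_{\Sigma_n} f^{\boxprod n}$ is a trivial cofibration is also imprecise: part (i) is a statement about the functor $P(n)_+\wedge_{\Sigma_n}(-)^{\times n}$ applied to a map $f$, i.e.\ about $P(n)_+\wedge_{\Sigma_n} A^{\wedge n} \to P(n)_+\wedge_{\Sigma_n} B^{\wedge n}$, not about the pushout product $f^{\boxprod n}$. The pushout-product version is indeed true and is what HHR's Lemma B.114 (cell filtration of $P(n)$ combined with the pushout product axiom) actually gives; but as stated in the paper Lemma \ref{lemma:rectification-property-family}(i) does not assert it, so one cannot just cite it. You acknowledge that ``bookkeeping'' is needed, but the gap here is more than bookkeeping: the right moves are (a) run the cellular filtration of $P(n)$ to show $P(n)_+\wedge_{\Sigma_n} f^{\boxprod n}$ is a trivial $h$-cofibration, (b) establish the rectification equivalence $P(n)_+\wedge_{\Sigma_n} Z \to Z/\Sigma_n$ for the intermediate cofibrant $\Sigma_n$-equivariant objects $Z = Q^n_i(f)$ appearing in the pushout-product filtration, and then (c) conclude $f^{\boxprod n}/\Sigma_n$ is a trivial $h$-cofibration by two-out-of-three. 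Your part (b) (closure properties and containment in weak equivalences for trivial $h$-cofibrations in $\Sp^{\sH}$) is fine.
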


\begin{remark} \label{remark:do-not-use-fresse}
In this section, we focused on transferred model structures. However, experience has shown that often one only expects a transferred \textit{semi}-model structure (defined in \cite{spitzweck-thesis} and treated in detail in \cite{fresse-book}), where lifting and factorization only hold for maps with cofibrant domains. Indeed, \cite[Example 2.8]{batanin-white} presents a situation where there is no model structure on a category of operad-algebras, but there is a semi-model structure. The reader may be tempted to rely on \cite[Theorem 12.3.A]{fresse-book} to obtain transferred semi-model structures on $E_\infty^\sF$-algebras, for any sequence $\sF$ encoding free $\Sigma_n$-actions. Such operads are all $\Sigma$-cofibrant when viewed in the Berger-Moerdijk model structure on operads \cite{BM03}, and any operad in $\Top^G$ (or even in $\Top$) can act in $\Sp^G$. The reader is warned: using those methods will not work to prove the existence of a transferred semi-model structure.

More precisely, the proof of \cite[Theorem 12.3.A]{fresse-book} for an operad $P$ requires that $P(n) \wedge_{\Sigma_n} (-)^{\wedge n}$ preserves weak equivalences between cofibrant objects. Non-equivariantly, this is proven using a $\Sigma_n$-equivariant cellular filtration, and using the $\Sigma_n$-freeness of $E\Sigma_n$. Equivariantly, we need $G\times \Sigma_n$-cells, and we need to work with $E_G \Sigma_n$. When applied to $P = E_\infty^\sF$, the proof of \cite[Theorem 12.3.A]{fresse-book} only treats the trivial $\rho: H\to \Sigma_n$. Thus, the types of algebras obtained have no multiplicative norms. The results of \cite{fresse-book} are correct, but only produce a transferred semi-model structure on $E_\infty^G$-algebras (or $E_\infty$-algebras if the $\Top$ enrichment is used).

\end{remark}

\begin{remark} \label{remark:levelwise-family-model-on-spectra}
In this section, we proved admissibility results for $N_\infty$-operads and $\sH$-$N_\infty$-operads (with respect to $\Sp^\sH$). It is natural to wonder if, for any sequence $\sF$, there is some model structure on $\Sp^G$ where $E_\infty^\sF$-algebras will have a transferred model structure, analogous to how we used the $\sH$-model structure to get admissibility for $\sH$-$N_\infty$-operads. Certainly one can define a levelwise model structure on sequences of $G$-spaces, relative to the families $\sF_n$, but more work would be required to promote this to a stable model structure on $\Sp^G$.

\end{remark}

\section{Rectification for $E_\infty^\sF$-operads} \label{sec:rectification}

In this section we consider when weak equivalences of $N_\infty$-operads induce Quillen equivalences on their categories of algebras. In order to use the results of the previous section, we assume $G$ is finite throughout this section. First, observe that for $\Top^G$, we do not expect rectification results in general, since (even when $G$ is trivial), $E_\infty$-algebras are not Quillen equivalent to commutative monoids. However, for $G$-spectra, we do have rectification between different $N_\infty$-operads \cite[Theorem A.3]{blumberg-hill}. We now give a similar result for $\sH$-$N_\infty$-operads in $\Sp^\sH$, generalizing \cite[Theorem A.3]{blumberg-hill}. 

\begin{proposition} \label{prop:H-N-infty-rect}
Let $\sH$ be a family of subgroups of $G$ and give $\Sp^\sH$ either the positive or positive complete model structure. Let $P$ and $P'$ be $\sH$-$N_\infty$-operads, with sequences $\sF$ and $\sF'$ respectively, with $P(1)$ and $P'(1)$ having nondegenerate $G$-fixed basepoints, and whose spaces have the homotopy type of $G\times \Sigma_n$-CW-complexes. Let $f:P\to P'$ be a weak equivalence in the $\sF'$-model structure on $G$-operads. Then $f$ induces a Quillen equivalence on the model structures of $P$ and $P'$ algebras of Proposition \ref{prop:family-admissible}.
\end{proposition}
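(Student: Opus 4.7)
The plan is to exploit the standard change-of-operad adjunction $f_{!}: \Alg_P(\Sp^\sH) \leftrightarrows \Alg_{P'}(\Sp^\sH): f^{*}$, where $f^{*}$ restricts a $P'$-action along $f$. By Proposition \ref{prop:family-admissible}, both categories carry transferred model structures, and $f^{*}$ preserves weak equivalences and fibrations tautologically, since these are created by the forgetful functor to $\Sp^\sH$. Consequently $(f_{!}, f^{*})$ is automatically a Quillen adjunction, and to promote it to a Quillen equivalence it would suffice to check that, for every cofibrant $P$-algebra $A$, the unit $A \to f^{*} f_{!} A$ is a weak equivalence in $\Sp^\sH$.

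I would reduce this to two sub-claims by a cellular induction. Every cofibrant $P$-algebra is a retract of a cellular $P$-algebra, obtained as a transfinite composition of pushouts of free $P$-algebra maps $F_P(j): F_P(K) \to F_P(L)$, where $j$ runs over the generating cofibrations of $\Sp^\sH$. Since $f_{!}$ is a left adjoint and $f_{!}F_P \cong F_{P'}$, it suffices to verify: first, that the unit $F_P(X) \to f^{*}F_{P'}(X)$ is a weak equivalence for every cofibrant $X$ in $\Sp^\sH$; and second, that the cellular filtration of a typical pushout in $\Alg_P$ is sent by $f_{!}$ to a weakly equivalent filtration in $\Alg_{P'}$.

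For the first sub-claim, the unit is given levelwise by the maps $f(n) \wedge_{\Sigma_n} \mathrm{id}: P(n) \wedge_{\Sigma_n} X^{\wedge n} \to P'(n) \wedge_{\Sigma_n} X^{\wedge n}$. The hypotheses that $f$ is a weak equivalence in the $\sF'$-model structure on $G$-operads, and that each $P(n)$, $P'(n)$ has the homotopy type of a $(G \times \Sigma_n)$-CW complex, combine with Lemma \ref{lemma:rectification-property-family}(ii) via Ken Brown's lemma to give the desired weak equivalence: both $P(n) \wedge_{\Sigma_n}(-)^{\wedge n}$ and $P'(n) \wedge_{\Sigma_n}(-)^{\wedge n}$ preserve weak equivalences between cofibrant objects, and comparing them through a cofibrant replacement of $f(n)$ yields the result.

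The main obstacle is the second sub-claim, the filtration bookkeeping at a pushout. Here I would mimic the arguments in the proof of Proposition \ref{prop:family-admissible} and in \cite[Appendix A]{blumberg-hill}: the $n$-th filtration stratum is constructed from $P(n) \wedge_{\Sigma_n}(-)^{\wedge n}$ applied to pushout products of $j$ with maps out of the current stage, so Lemma \ref{lemma:rectification-property-family}(i) yields stagewise weak equivalences between the $P$- and $P'$-filtrations. The nondegenerate $G$-fixed basepoint hypothesis on $P(1)$ and $P'(1)$ supplies the cofibration hypothesis needed so that these stagewise weak equivalences are in fact trivial $h$-cofibrations (in the sense of \cite{kervaire}), and therefore pass to the transfinite colimit, completing the induction and establishing that $(f_{!}, f^{*})$ is a Quillen equivalence.
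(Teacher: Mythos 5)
Your overall strategy matches the paper's: both set up the change-of-operad Quillen adjunction $(f_!, f^*)$, reduce to a cellular induction, and anchor that induction in a comparison of free $P$- and $P'$-algebras. The paper outsources the cellular induction to \cite[Proposition~5.14]{ABGHR} (as Blumberg--Hill do in \cite[Theorem~A.3]{blumberg-hill}) rather than redoing the filtration bookkeeping by hand, but that is a presentational difference, not a substantive one.

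There is, however, a gap in your first sub-claim, the free-algebra comparison. You invoke Lemma~\ref{lemma:rectification-property-family}(ii) together with Ken Brown's lemma and assert that because $P(n)_+\wedge_{\Sigma_n}(-)^{\wedge n}$ and $P'(n)_+\wedge_{\Sigma_n}(-)^{\wedge n}$ each preserve weak equivalences between cofibrant $X$, ``comparing them through a cofibrant replacement of $f(n)$'' gives the result. But Lemma~\ref{lemma:rectification-property-family}(ii) controls what happens when you vary $X$, not when you vary the operad space being smashed; the fact that two functors both preserve weak equivalences says nothing about whether the natural transformation
\[
f(n)_+\wedge_{\Sigma_n}\mathrm{id}\colon P(n)_+\wedge_{\Sigma_n}X^{\wedge n}\longrightarrow P'(n)_+\wedge_{\Sigma_n}X^{\wedge n}
\]
is a weak equivalence. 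To close this, you need the observation that opens the paper's proof and is missing from yours: the hypothesis that $f$ is a weak equivalence in the $\sF'$-model structure on $G$-operads forces $\sF_n = \sF'_n$ for all $n$ (taking $\Gamma$-fixed points of $f(n)$ for $\Gamma\in\sF_n\setminus\sF'_n$ or $\Gamma\in\sF'_n\setminus\sF_n$ would produce a map between an empty and a nonempty space). Since $P(n)$ and $P'(n)$ are then universal classifying spaces for the \emph{same} family, hence cofibrant and fibrant in the same model structure on $\Top^{G\times\Sigma_n}$, the map $f(n)$ is a genuine $(G\times\Sigma_n)$-homotopy equivalence by Whitehead's theorem. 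From there the comparison of free algebras is established by the cellular filtration technique underlying Lemma~\ref{lemma:rectification-property-family}(i) (which is what the paper actually cites), or directly by noting that an equivariant homotopy equivalence $f(n)$ descends through $(-)_+\wedge_{\Sigma_n}X^{\wedge n}$. Your second sub-claim (the filtration step), by contrast, is handled correctly with Lemma~\ref{lemma:rectification-property-family}(i) and the nondegenerate-basepoint hypothesis.
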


\begin{proof}
The assumption on $f$ guarantees that $\sF_n \subset \sF_n'$ for all $n$, because for any $\Gamma \in \sF_n$, the map $f_n^{\Gamma}$ cannot have empty codomain. Indeed, the assumption on $f$ guarantees that $\sF = \sF'$, because for all $\Gamma' \in \sF_n'$, the map $f_n^{\Gamma'}$ cannot have empty domain if it is to be a weak equivalence in $\Top$.

As in \cite[Theorem A.3]{blumberg-hill}, we appeal to \cite[Proposition 5.14]{ABGHR}, which carries out a detailed cellular induction argument whose base case consists of a comparison of free algebras. For us, this comparison uses Lemma \ref{lemma:rectification-property-family}(i). The rest of the cellular induction is formal, as long as we keep in mind that we never take fixed points for $H \notin \sH$.
\end{proof}

\begin{remark}
As in Remark \ref{remark:do-not-use-fresse}, the reader is warned not to rely on non-equivariant rectification results in the equivariant context. For example, while \cite[Theorem 12.5.A]{fresse-book} does provide Quillen equivalences of semi-model categories of algebras over $\Sigma$-cofibrant operads, forgetting to this setting loses the information of the multiplicative norms. If \cite[Theorem 12.5.A]{fresse-book} is used, it only says that $E_\infty^G$-operads have Quillen equivalent categories of algebras (or $E_\infty$-operads if the $\Top$-enrichment is used), not that $N_\infty$-operads are all weakly equivalent.
\end{remark}

We turn now to strictification, i.e., Quillen equivalences between $E_\infty^\sF$-algebras and strict commutative monoids.  \cite[Theorem A.6]{blumberg-hill} proves a related result for complete $N_\infty$-operads, but does not obtain a Quillen equivalence. We now prove the analogue for $\sH$-$N_\infty$-operads, and along the way we improve \cite[Theorem A.6]{blumberg-hill} to obtain a Quillen equivalence (with respect to either the positive or positive complete model structure on $\Sp^G$).

\begin{theorem} \label{thm:family-rectification}
Let $P$ be a complete $\sH$-$N_\infty$-operad. Then rectification holds between $P$ and $\Com$ in the positive complete (or positive) model structure $\Sp^\sH$, i.e., the unique map $f: P\to Com$ to the terminal $G$-operad induces a Quillen equivalence $f_!: \Alg_{E_\infty^\sF}(\Sp^\sH) \leftrightarrows \Alg_{Com}(\Sp^\sH):f^*$.
In particular, given a $P$-algebra $X$ there is a strictly commutative algebra $\tilde{X}$ and a weak equivalence $X\simeq \tilde{X}$.
\end{theorem}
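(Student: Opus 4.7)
The plan is to establish that $f_! \dashv f^*$ is a Quillen adjunction, and then reduce the Quillen equivalence claim to showing that the derived unit is a weak equivalence on cofibrant $P$-algebras, by exploiting the fact that $f^*$ both preserves and reflects weak equivalences (since weak equivalences in both transferred model structures are created in $\Sp^\sH$). First, admissibility of both sides is known: $\Alg_P(\Sp^\sH)$ has a transferred model structure by Proposition \ref{prop:family-admissible} (completeness is not used there), and $\Alg_{\Com}(\Sp^\sH)$ has a transferred model structure by the generalized commutative monoid axiom of the previous section (the proof of \cite[Proposition B.131]{kervaire} adapts to $\Sp^\sH$ since all the ingredients -- the rectification property and the behavior of $f^{\boxprod n}/\Sigma_n$ on trivial cofibrations -- hold in $\Sp^\sH$). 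That $f^*$ preserves fibrations and weak equivalences is immediate from the definition of both transferred model structures.

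Next, I would prove the derived unit is a weak equivalence by a cellular induction on cofibrant $P$-algebras $X$, following the template of \cite[Proposition 5.14]{ABGHR} used in Proposition \ref{prop:H-N-infty-rect}. For the base case, take $X = P(A)$ the free $P$-algebra on a cofibrant object $A \in \Sp^\sH$. Then $f_!(P(A)) = \Com(A)$, so the unit map reduces, at the level of underlying spectra, to the coproduct of the natural maps
\[
P(n)_+ \wedge_{\Sigma_n} A^{\wedge n} \longrightarrow A^{\wedge n}/\Sigma_n.
\]
This is precisely where completeness of $P$ is essential: Lemma \ref{lemma:rectification-property-family}(iii) identifies these as weak equivalences in $\Sp^\sH$, and without completeness they can fail to be. Taking the wedge over $n$ preserves weak equivalences between cofibrant objects, giving the base case.

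For the inductive step, the standard cellular filtration expresses a pushout $X \to X[u]$ of $P$-algebras (along a cofibration $u \colon A \to B$ in $\Sp^\sH$) as a sequential colimit whose successive stages are pushouts in $\Sp^\sH$ involving functors of the form $P(n)_+ \wedge_{\Sigma_n} (-)^{\wedge n}$ applied to pushout products of $u$ and the unit of $X$; the analogous filtration holds on the $\Com$-algebra side with $P(n)$ replaced by a point. The key compatibility is that $f_!$ commutes with colimits and intertwines these two filtrations, and that the resulting comparison morphism at each filtration stage is, termwise, a rectification-type map $P(n)_+ \wedge_{\Sigma_n} (-)^{\wedge n} \to (-)^{\wedge n}/\Sigma_n$ applied to cofibrant spectra; Lemma \ref{lemma:rectification-property-family}(i)-(iii) then force it to be a weak equivalence. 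Passing to the sequential colimit is legitimate because both $P$-algebras and commutative monoids can be built by transfinite compositions of trivial $h$-cofibrations (as in the proof of Proposition \ref{prop:family-admissible}), which preserve weak equivalences under colimits. A transfinite induction using \cite[Proposition 5.14]{ABGHR} completes the cellular induction.

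The main obstacle is controlling the inductive step across pushouts of cells in the category of $P$-algebras: one must carefully identify the cellular filtration in $\Alg_P(\Sp^\sH)$ with its image under $f_!$ in $\Alg_{\Com}(\Sp^\sH)$ and verify that the comparison map between successive layers is pointwise a rectification map of the type controlled by Lemma \ref{lemma:rectification-property-family}. Once the derived unit is shown to be a weak equivalence for all cofibrant $P$-algebras, the Quillen equivalence follows formally: given a fibrant commutative monoid $Y$ with cofibrant replacement $Q(f^* Y) \to f^* Y$ in $\Alg_P(\Sp^\sH)$, the derived counit $f_! Q(f^* Y) \to Y$ sits in a triangle with the derived unit $Q(f^* Y) \to f^* f_! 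Q(f^* Y)$, and applying $f^*$ shows the counit becomes a weak equivalence after applying $f^*$, hence is a weak equivalence. The final statement about strictification then follows by taking $\tilde X$ to be a fibrant replacement of $f_!(QX)$ in commutative monoids and pulling back.
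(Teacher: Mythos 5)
Your proposal is correct and follows essentially the same approach as the paper: both reduce the Quillen equivalence to the derived unit on cofibrant $P$-algebras (using that $f^*$ creates weak equivalences), reduce by cellular induction to free $P$-algebras, and identify the unit there as the rectification map $\bigvee P(n)_+ \wedge_{\Sigma_n} X_0^{\wedge n} \to \bigvee X_0^{\wedge n}/\Sigma_n$, which is a weak equivalence by Lemma \ref{lemma:rectification-property-family}(iii) precisely because $P$ is complete. You spell out the cellular-induction and counit steps that the paper compresses into a single citation, but the substance is the same.
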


\begin{proof}
Following \cite[Theorem 1.6]{goerss-hopkins-moduli-spaces}, it is easy to show that $(f_!,f^*)$ is a Quillen pair using the definition of the weak equivalences and fibrations in both categories, since weak equivalences and fibrations are created by the forgetful functor to $\Sp^\sH$. To show it is a Quillen equivalence we must show that for all cofibrant $E_\infty^\sF$-algebra $X$, the unit of the adjunction $X\to f^*f_!X$ is a weak equivalence. Reduce to the case where $X = P(X_0)$ via a standard cellular induction. The map $X\to f^*f_!X$ is now exactly the map induced by $f$ which goes from $P(X_0) \simeq \bigvee (E_\sF \Sigma_n)_+ \wedge_{\Sigma_n} X_0^{\wedge n}$ to $\Sym(X_0) = \bigvee X_0^{\wedge n}/\Sigma_n$. Lemma \ref{lemma:rectification-property-family}(iii) now proves this map is a weak equivalence.
\end{proof}

\begin{remark}
Taking $\sH$ to be the family of all subgroups of $G$ improves \cite[Theorem~A.6]{blumberg-hill} to obtain a Quillen equivalence.
\end{remark}

\begin{remark}
An alternative approach to Theorem \ref{thm:family-rectification} is to appeal to \cite[Theorem~4.6]{white-commutative-monoids}, where it is proven that, if the rectification axiom holds (as is shown here in Proposition~\ref{prop:rect-axiom-for-positive} and Lemma \ref{lemma:rectification-property-family}(iii)), then $f$ induces a Quillen equivalence between commutative monoids and $QCom$-algebras, which in this case are precisely $P$-algebras for $P$ as in the statement of Theorem~\ref{thm:family-rectification}. \cite[Theorem 4.6]{white-commutative-monoids} has been generalized in \cite{white-yau3}, which also provides a mechanism for lifting Quillen equivalences among different models of $G$-spectra to Quillen equivalences of $P$-algebras.
\end{remark}

\section{Bousfield Localization and $E^\sF_\infty$-structure} \label{sec:localization}

In this section we give an application of the work from the previous
sections. We work with a compact Lie group $G$, but we remind the reader that results regarding the existence of transferred (semi-)model structures on commutative equivariant ring spectra and algebras over $N_\infty$-operads, and results regarding rectification, are only presently known for the setting of a finite group $G$. We begin with an example \cite[Theorem 2]{mcclure-e-infinity-tate}, that we learned from Mike Hill, demonstrating that certain localizations of $\Sp^G$ can take $E_\infty^\sF$-algebras to $E_\infty^G$-algebras, destroying all norm structure. This example is expounded in \cite[Example 5.7]{white-localization}, but we recall the main details.

\begin{example} \label{example:hill}
Let $\sP$ be the family of proper subgroups of $G$ and let $\tilde{E}\sP$ be the cofiber of the natural map from the classifying space $E\sP_+$ to $S^0$. This $\tilde{E}\sP$ is a localization of $S^0$ obtained by killing all maps from induced cells. If $G$ is finite, then this spectrum $E = \tilde{E}\sP$ does not admit multiplicative maps from the norms of its restrictions, and so cannot be a commutative monoid (nor an algebra over any $N_\infty$-operad with norms linking proper subgroups of $G$ with $G$), even though it is a localization of the commutative $G$-ring spectrum $S^0$. The proof that $E$ cannot admit multiplicative norms uses the fact that the restriction $\res_H(E)$ is contractible for every $H \in \sP$, hence there cannot be ring homomorphisms $N_H^G \res_H(E) \to E$ as would be required if $E$ had norms. These maps cannot exist, because $E$ is not contractible (because $E\sP_+$ is not homotopy equivalent to $S^0$, because $\sP$ doesn't contain $G$). So while any restriction to a proper subgroup views them to be homotopy equivalent, they are not homotopy equivalent in $\Sp^G$. This example is a localization that takes every $P$-algebra to an $E_\infty^G$-algebra, for any $N_\infty$-operad $P$. We denote this localization $L_{\sP}$.
\end{example}

Indeed, something similar occurs non-equivariantly. The Postnikov section map demonstrates that a localization can even destroy $E_\infty$ structure \cite[Section 6]{CGMV}. We prohibit this behavior by assuming our localizations commute with non-equivariant suspension (see also \cite{Gut12} for more results in this direction). In the language of \cite{CGMV} such localizations are called \textit{closed}. In the language of
\cite{white-localization} they are called \textit{monoidal localizations}. Recall that, for a set of morphisms $C$ in a model category $\M$, the left Bousfield localization $L_C(\M)$ is a new model structure on $\M$ with more weak equivalences (called $C$-local equivalences), the same cofibrations, and satisfying the universal property that any left Quillen functor $F:\M\to \cat N$, taking the morphisms in $C$ to weak equivalences, factors through the identity $id:\M\to L_C(\M)$. Hirschhorn \cite{hirschhorn} proves the existence of the model structures $L_C(\M)$ if $\M$ is left proper and cellular. These conditions on $\Sp^G$ are verified in Appendix \ref{appendix}. Monoidal localizations are characterized in \cite[Theorem 4.5]{white-localization}. We state the version of that theorem for $\Sp^\sH$.

\begin{theorem} \label{thm:monoidal-loc-G-spectra}
Let $G$ be a compact Lie group. Suppose $C$ is a set of cofibrations between cofibrant objects in $\Sp^\sH$. Then the model category $L_C(\Sp^\sH)$ is a monoidal model category if and only if the class of $C$-local equivalences is closed under suspension, and for all $H\in \sH$ maps of the form $C\wedge F_W((G/H)_+ \wedge S^{n-1}_+)$ are $C$-local equivalences for all $W$ in the universe on which $\Sp^\sH$ is indexed.
\end{theorem}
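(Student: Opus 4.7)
The plan is to deduce the theorem from the general monoidality criterion for left Bousfield localizations established in \cite[Theorem 4.5]{white-localization}: for a left proper, cellular, monoidal model category $\M$ and a set $C$ of cofibrations between cofibrant objects, $L_C(\M)$ is a monoidal model category if and only if the class of $C$-local equivalences is closed under suspension and $c \boxprod g$ is a $C$-local equivalence for every $c \in C$ and every generating cofibration $g$ of $\M$. The required model-categorical hypotheses on $\Sp^\sH$ (left properness, cellularity, monoidality) are verified earlier in the paper and in Appendix \ref{appendix}, so the task reduces to translating the general criterion into the specific form given in the theorem statement.

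The forward direction is essentially immediate. Assume $L_C(\Sp^\sH)$ is monoidal. Then smashing with any cofibrant object is a left Quillen endofunctor of $L_C(\Sp^\sH)$, and hence preserves $C$-local equivalences (first between cofibrant objects via Ken Brown's lemma, then in general after functorial cofibrant replacement). Condition (1) is immediate from this since $S^1$ is cofibrant. For condition (2), note that each $F_W((G/H)_+ \wedge S^{n-1}_+)$ is cofibrant in $\Sp^\sH$ (as $F_W$ is a left Quillen functor and $(G/H)_+ \wedge S^{n-1}_+$ is cofibrant in pointed $G$-spaces with the $\sH$-model structure for $H \in \sH$), and each $c \in C$ is a $C$-local equivalence between cofibrant objects, so smashing $c$ with this cofibrant object remains a $C$-local equivalence.

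For the backward direction, assume (1) and (2). The generating cofibrations of $\Sp^\sH$ are the maps $g_{W,H,n} \colon F_W((G/H)_+ \wedge S^{n-1}_+) \to F_W((G/H)_+ \wedge D^n_+)$ for $H \in \sH$, with stable cofiber $F_W((G/H)_+ \wedge S^n)$ (where $S^n \cong D^n_+/S^{n-1}_+$). For $c \colon A \to B$ in $C$, since both $c$ and $g_{W,H,n}$ are cofibrations between cofibrant objects, the cofiber of $c \boxprod g_{W,H,n}$ is identified with $C_c \wedge F_W((G/H)_+ \wedge S^n)$, where $C_c$ is the cofiber of $c$. I then show this cofiber is $C$-locally trivial as follows: condition (2), applied at index $n+1$, gives that $C_c \wedge F_W((G/H)_+ \wedge S^n_+)$ is $C$-locally trivial, and the pointed wedge decomposition $S^n_+ \simeq S^n \vee S^0$ yields a splitting $F_W((G/H)_+ \wedge S^n_+) \simeq F_W((G/H)_+ \wedge S^n) \vee F_W((G/H)_+)$, so the desired summand $C_c \wedge F_W((G/H)_+ \wedge S^n)$ inherits local triviality as a retract. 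Condition (1) handles the cases where suspension shifts are needed to reduce to indices covered by (2). This verifies $c \boxprod g_{W,H,n}$ is a $C$-local equivalence, which is precisely the hypothesis demanded by the general criterion.

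The main technical obstacle is the bookkeeping in the backward direction, specifically reconciling the $S^{n-1}_+$ (with disjoint basepoint) appearing in condition (2) with the honest sphere $S^n$ appearing in the cofiber of $g_{W,H,n}$; this is exactly where the wedge decomposition $S^n_+ \simeq S^n \vee S^0$ and the suspension closure (1) are needed in tandem. The unit axiom for $L_C(\Sp^\sH)$ is inherited automatically from $\Sp^\sH$, since weak equivalences in $\Sp^\sH$ are $C$-local equivalences and cofibrations are unchanged by localization.
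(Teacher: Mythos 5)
The paper itself gives no proof of Theorem \ref{thm:monoidal-loc-G-spectra}; it states the result as a direct specialization of \cite[Theorem 4.5]{white-localization} and points the reader there. Your proposal is therefore doing real work: it is a genuine derivation of the theorem from the general criterion. The forward direction and the identification of the ingredients (left properness, cellularity, identification of generating cofibrations) are all fine, and the overall strategy of the backward direction is sound. However there is one genuine gap.

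Your backward direction argues: the cofiber of $c \boxprod g_{W,H,n}$ is $C_c \wedge F_W((G/H)_+ \wedge S^n)$; this cofiber is $C$-locally trivial; therefore $c \boxprod g_{W,H,n}$ is a $C$-local equivalence. The final inference --- ``a cofibration with $C$-locally contractible cofiber is a $C$-local equivalence'' --- is exactly the statement that $L_C(\Sp^\sH)$ is a \emph{stable} model category, since it amounts to the cofiber sequence being a fiber sequence in $\operatorname{Ho}(L_C(\Sp^\sH))$. This is not automatic for a left Bousfield localization of a stable model category: the Postnikov truncation example already discussed in Section \ref{sec:localization} shows that the local homotopy category need not be stable in general. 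The stability of $L_C(\Sp^\sH)$ does in fact follow from condition (1), but that implication is precisely the sort of thing that must be argued rather than invoked silently; as written, the step is circular in spirit (you are using a structural fact about $L_C(\Sp^\sH)$ to establish that $L_C(\Sp^\sH)$ has good structure). The sentence ``Condition (1) handles the cases where suspension shifts are needed'' does not address this; no index shift is needed since condition (2) ranges over all $n$, and the actual role of (1) in your argument is the unaddressed stability point.

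Two further observations. First, I believe \cite[Theorem 4.5]{white-localization} is stated in terms of $c \wedge K$ for $K$ a domain or codomain of a generating cofibration, not $c \boxprod g$; if so, your reduction to the pushout product is an extra (and harder) step that is not needed. Second, and more to the point, the backward direction can be closed without any stability input at all, and this is worth knowing: from condition (2), each $c \wedge K$ with $K$ a domain of a generating cofibration is a $C$-local equivalence, and the codomain case $c \wedge F_W((G/H)_+ \wedge D^n_+) \simeq c \wedge F_W((G/H)_+)$ follows as a retract of the $n=1$ instance via the splitting $S^0_+ \simeq S^0 \vee S^0$. Then, for a generating cofibration $g \colon K \to L$, consider the pushout square defining the corner object of $c \boxprod g$: the map $A \wedge L \to (A \wedge L) \cup_{A \wedge K} (B \wedge K)$ is the cobase change of the $C$-local equivalence $c \wedge K$ along the cofibration $A \wedge g$, hence a $C$-local equivalence by left properness of $L_C(\Sp^\sH)$; composing with $c \boxprod g$ gives $c \wedge L$, a $C$-local equivalence, so $c \boxprod g$ is one by two-out-of-three. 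This argument uses only left properness (which you have already verified), makes no appeal to stability of the localization, and is almost certainly closer to what White's proof actually does.
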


For such localizations, the additional equivariant norm structure can be destroyed, but the baseline $E_\infty^\sH$ structure is always preserved, where $E_\infty^\sH$ denotes the operad of Definition~\ref{defn:E-infty-F-operad} with respect to the families $\sF_n = \{H \times 1\;|\; H\in \sH\}$. As a special case (taking $\sH$ to be all subgroups of $G$), we prove that any monoidal localization of a $P$-algebra, for an $N_\infty$-operad $P$, has an $E_\infty^G$-structure.

\begin{defn} \label{defn:preservation-white}
$L_C$ is said to \textit{preserve $P$-algebras} if

\begin{enumerate}
\item When $E$ is a $P$-algebra there is some $P$-algebra $\tilde{E}$ which is weakly equivalent in $\M$ to $L_C(E)$.

\item In addition, when $E$ is a cofibrant $P$-algebra, then there is a choice of $\tilde{E}$ in $\Alg_P(\M)$ with $U(\tilde{E})$ local in $\M$, there is a $P$-algebra homomorphism $r_E:E\to \tilde{E}$ that lifts the localization map $l_E:E\to L_C(E)$ up to homotopy, and there is a weak equivalence $\beta_E:L_C(UE)\to U\tilde{E}$ such that $\beta_E \circ l_{UE} \cong Ur_E$ in $\ho(\M)$.
\end{enumerate}
\end{defn}

To understand when localization preserves structure, we make use of \cite[Corollary 3.4 and Theorem 5.1]{white-localization}, restated to $G$-spectra, and weakened to avoid the language of semi-model structures. In the following, $G$ is a compact Lie group, though to verify the existence of transferred model structures in practice often requires $G$ to be finite.

\begin{theorem} \label{thm:localization-preserves}
Suppose $C$ is a class of maps in $\Sp^\sH$ for which the Bousfield localization $L_C(\M)$ exists and is a monoidal model category. Let $P$ be an operad such that the categories of $P$-algebras in $\Sp^\sH$ and in $L_C(\Sp^\sH)$ inherit model structures from $\Sp^\sH$ and $L_C(\Sp^\sH)$. Then $L_C$ preserves $P$-algebras. Furthermore, if $Q$ is a $\Sigma$-cofibrant $G$-operad, with respect to $\Oper(\Top^\sH)$, then any monoidal left Bousfield localization $L_C$ preserves $Q$-algebras.
\end{theorem}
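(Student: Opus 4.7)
The plan is to reduce the theorem to the abstract preservation results \cite[Corollary 3.4 and Theorem 5.1]{white-localization}, whose main inputs are a transferred model structure on $P$-algebras in both $\Sp^\sH$ and $L_C(\Sp^\sH)$, together with the universal property of the Bousfield localization. Once these inputs are secured, no new work is needed: the core idea is that fibrant replacement inside $\Alg_P(L_C(\Sp^\sH))$ automatically produces a $P$-algebra whose underlying spectrum is $C$-local, and this is precisely the data required by Definition~\ref{defn:preservation-white}.

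For the first assertion, I would start with a cofibrant $P$-algebra $E$ and functorially factor the map $E\to \ast$ in $\Alg_P(L_C(\Sp^\sH))$ as a trivial cofibration $r_E:E\to \tilde E$ followed by a fibration. Because weak equivalences and fibrations of $P$-algebras are created by the forgetful functor $U:\Alg_P\to \Sp^\sH$, the map $U r_E$ is a $C$-local equivalence into a fibrant, hence $C$-local, object. The universal property of $L_C$ then produces a map $\beta_E:L_C(UE)\to U\tilde E$ with $\beta_E\circ l_{UE}\simeq U r_E$ in $\ho(\Sp^\sH)$; since both $L_C(UE)$ and $U\tilde E$ are $C$-local, two-out-of-three among $C$-local equivalences forces $\beta_E$ to be a weak equivalence in $\Sp^\sH$. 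This verifies both conditions of Definition~\ref{defn:preservation-white}.

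For the second assertion, it suffices to upgrade the hypothesis by establishing transferred model structures on $\Alg_Q$ in both $\Sp^\sH$ and $L_C(\Sp^\sH)$ whenever $Q$ is $\Sigma$-cofibrant with respect to $\Oper(\Top^\sH)$. In $\Sp^\sH$ this follows from the transfer principle (Theorem~\ref{thm:transfer}), using the smallness and cellularity facts recorded in the proof of Proposition~\ref{prop:admissibility-G-top} and the topological path-object $(-)^{\Delta[1]}$. For the localized category, the fact that $L_C$ is monoidal, in the sense of Theorem~\ref{thm:monoidal-loc-G-spectra}, is what guarantees that cell attachments in the filtration of a free $Q$-algebra extension $Q(A)\to Q(B)$ remain $C$-local equivalences, so that the same transfer argument carries through and the forgetful functor creates the model structure. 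Once both transferred model structures are in hand, the conclusion reduces to the first assertion applied to $P=Q$.

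The main obstacle is the transfer in the localized setting: for a generating trivial cofibration $j:A\to B$ of $L_C(\Sp^\sH)$ with cofibrant domain, one must verify that every stage of the Berger--Moerdijk filtration of $Q(A)\to Q(B)$, built out of pushouts of pushout-products of $j$ smashed with $Q(n)$ and divided by $\Sigma_n$, lies in the saturation of the $C$-local equivalences. This is exactly where the $\Sigma$-cofibrancy of $Q$ is used to replace orbits by homotopy orbits through the $\Sigma$-free cells of $Q(n)$, and where the monoidality of $L_C$ (via Theorem~\ref{thm:monoidal-loc-G-spectra}) is used to ensure that smashing with cofibrant spectra preserves $C$-local equivalences.
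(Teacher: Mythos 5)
Your proposal takes essentially the same route as the paper, which gives no independent argument at all but simply cites \cite[Corollary 3.4 and Theorem 5.1]{white-localization} restated to $G$-spectra; your reduction to those abstract results, the fibrant-replacement argument in $\Alg_P(L_C(\Sp^\sH))$ matching Definition~\ref{defn:preservation-white}, and the use of the Berger--Moerdijk filtration together with monoidality of $L_C$ for the $\Sigma$-cofibrant case are all faithful reconstructions of what the cited reference does. One small caution: for the second assertion, appealing directly to Theorem~\ref{thm:transfer} is not quite the mechanism used in practice, since condition~(ii) (a fibrant replacement functor on $Q$-algebras) is not automatic in $\Sp^\sH$ the way it is in $\Top^G$; the paper sidesteps this by stating the theorem ``weakened to avoid the language of semi-model structures,'' and in \cite{white-localization} the $\Sigma$-cofibrant case produces only a transferred \emph{semi}-model structure (obtained via the $h$-cofibration/filtration method rather than Quillen's path-object argument), which is nonetheless sufficient to carry out the preservation argument you describe.
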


\begin{corollary} \label{cor:loc-pres-Sigma-cof}
For any $\sH$-$N_\infty$-operad $P$, any $P$-algebra $X$, and any monoidal left Bousfield localization $L_C$, the object $L_C(X)$ is weakly equivalent to an $E_\infty^\sH$-algebra.
\end{corollary}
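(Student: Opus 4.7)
The plan is to reduce the corollary to the second clause of Theorem \ref{thm:localization-preserves}, which asserts that any monoidal left Bousfield localization preserves algebras over $\Sigma$-cofibrant $G$-operads. To apply this, two things must be established: that $E_\infty^\sH$ is $\Sigma$-cofibrant with respect to $\Oper(\Top^\sH)$, and that the $P$-algebra $X$ can naturally be viewed as an $E_\infty^\sH$-algebra via a comparison map $E_\infty^\sH \to P$ of $G$-operads.

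For the first ingredient, recall from Example \ref{ex:recovering-BM-model} that $E_\infty^\sH$ is defined as a cofibrant replacement of $\Com$ in the $\sF$-model structure on $\Oper^G$ where $\sF_n = \{H \times 1 \mid H \in \sH\}$. This sequence is trivially realizable in the sense of Definition \ref{defn:realizable}: every homomorphism $\rho \colon H \to \Sigma_m$ permitted by the families $\sF_m$ is the trivial map, so any wreath product $\sF_k \wr (\sF_{n_1} \times \cdots \times \sF_{n_k})$ again consists only of subgroups of the form $H \times 1$ with $H \in \sH$, and hence sits inside $\sF_n$. Proposition \ref{prop:cof-operads-forget} then yields that $E_\infty^\sH$ is $\sF$-cofibrant. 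Under the identification $X^{H \times 1} = X^H$ (taking $H$-fixed points of the $G$-action), the $\sF$-model structure on $\Coll_G$ coincides with the projective model structure on collections in $\Top^\sH$, so $\sF$-cofibrancy of $E_\infty^\sH$ is exactly $\Sigma$-cofibrancy in $\Oper(\Top^\sH)$.

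For the second ingredient, the $\sH$-$N_\infty$ hypothesis on $P$ forces $H \times 1 \in \sN_n(P)$ for every $H \in \sH$ and every $n$, so $P(n)^{H \times 1} \simeq \ast$ for such $H$. Consequently the terminal map $P \to \Com$ is a weak equivalence in the $\sF$-model structure on $\Oper^G$. Every object is fibrant in this family fixed-point model structure, and $E_\infty^\sH$ is cofibrant and weakly equivalent to $\Com$; by the standard model-categorical lifting, the weak equivalence $P \to \Com$ induces a bijection on homotopy classes of maps out of $E_\infty^\sH$, producing a map $\varphi \colon E_\infty^\sH \to P$ of $G$-operads whose composite with $P \to \Com$ is homotopic to the structural augmentation $E_\infty^\sH \to \Com$. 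Restriction along $\varphi$ endows $X$ with the structure of an $E_\infty^\sH$-algebra.

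With both ingredients in place, Theorem \ref{thm:localization-preserves} applied to $Q = E_\infty^\sH$ yields that $L_C$ preserves $E_\infty^\sH$-algebras, so $L_C(X)$ is weakly equivalent (in $\Sp^\sH$) to an $E_\infty^\sH$-algebra, as required. The main obstacle is the $\Sigma$-cofibrancy claim for $E_\infty^\sH$; once realizability of the sequence $\sF_n = \{H \times 1\}$ has been observed and Proposition \ref{prop:cof-operads-forget} invoked, the construction of $\varphi$ and the final appeal to the preservation theorem are formal.
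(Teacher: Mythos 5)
Your proof is correct and follows essentially the same route as the paper: both establish $\Sigma$-cofibrancy of $E_\infty^\sH$ in $\Oper(\Top^\sH)$, restrict the $P$-algebra structure to an $E_\infty^\sH$-algebra structure, and conclude via Theorem \ref{thm:localization-preserves}. Where the paper cites Remark \ref{remark:do-not-use-fresse} for the $\Sigma$-cofibrancy and simply asserts the existence of the forgetful functor, you supply the missing details --- observing that the sequence $\sF_n = \{H \times 1 \mid H \in \sH\}$ is trivially realizable so that Proposition \ref{prop:cof-operads-forget} applies, identifying the resulting $\sF$-cofibrancy with $\Sigma$-cofibrancy in $\Oper(\Top^\sH)$, and constructing the comparison map $\varphi\colon E_\infty^\sH \to P$ by a standard lifting argument using that $P \to \Com$ is a weak equivalence in the $\sF$-model structure and all objects are fibrant.
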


\begin{proof}
There is a forgetful functor from $\Alg_P(\Sp^\sH)$ to $\Alg_{E_\infty^\sH}(\Sp^{\sH})$ that forgets all nontrivial $\rho$ (i.e., forgets all norms). As discussed in Remark \ref{remark:do-not-use-fresse}, the operad $E_\infty^\sH$ is $\Sigma$-cofibrant relative to the Berger--Moerdijk model structure on $\Oper(\Top^\sH)$ of \cite{BM03}. By Theorem~\ref{thm:localization-preserves}, with $Q = E_\infty^\sH$, $L_C(X)$ is weakly equivalent to an $E_\infty^\sH$-algebra.
\end{proof}

Unlike the non-equivariant setting, equivariant localizations can be ``partially monoidal'', i.e., maps in $C\wedge F_W((G/H)_+ \wedge S^{n-1}_+)$ can be $C$-local equivalences for some $W$ and some $H$, even if they are not $C$-local equivalences for all $W,H$.

\begin{remark} \label{remark:lattice-of-operads}
For any sequence of families of subgroups of $G\times \Sigma_n$, we have $E_\infty^\sF$-operads via Definition \ref{defn:E-infty-F-operad}. For each $n$, there is a poset of families of subgroups of $G\times \Sigma_n$, ordered by inclusion. For any subfamily $\sF_n \subset \sF_n'$, an $E_\infty^{\sF'}$-algebra may be viewed as an $E_\infty^\sF$-algebra via neglect of structure. Hence, we have a poset of $E_\infty^\sF$-operads, interpolating between the minimal amount of structure (discussed in Example \ref{ex:recovering-BM-model}) and the $E_\infty^\sF$-operad corresponding to the sequence $\sF = (\sF_n)$ where $\sF_n$ is all subgroups of $G\times \Sigma_n$. Non-monoidal localizations can take algebras over an $E_\infty^{\sF'}$-operad to a lower $E_\infty^\sF$-operad (for $\sF \subset \sF'$).

Within this poset of $E_\infty^\sF$-operads is contained a poset of $N_\infty$-operads (and, more generally, $\sH$-$N_\infty$-operads). This poset interpolates between $E_\infty^G$ (which has no multiplicative norm maps) and complete $N_\infty$-operads (which has all possible norms). Non-monoidal localization can also move algebras within this poset. Worse, even monoidal localizations can move algebras within this poset, as Example \ref{example:hill} shows. This problem is remedied in Theorem \ref{thm:preserving-N-infty} below. The observation that Example \ref{example:hill} is smashing, hence monoidal, is due to Justin Noel.
\end{remark}

\begin{example} \label{example:lattice-postnikov}
We have seen that monoidal localizations of $N_\infty$-operad algebras always land at least in $E_\infty^G$-algebras. Generalizing the Postnikov section example \cite[Section 6]{CGMV}, we may define localizations $L$ that are monoidal with respect to any family $\sH$ of subgroups of $G$, but not monoidal with respect to all subgroups of $G$. These localizations drop structure to less than $E_\infty^G$. The following diagram represents the movement within the poset discussed in the previous remark, where all families take the form $\{H\times 1\;|\; H\in \sH\}$. The vertical bars represent the forgetful functors, and the families get smaller in lower levels of the diagram (i.e., $\sH \subset \sH'$). The arrows represent localization functors. For simplicity, we have compressed the poset to a linear order, though of course the actual posets need not be linear:

\begin{align*}
\xymatrix{E_\infty^G \ar@{-}[d] \ar[dddr] \ar[ddddr] \ar[dr] \ar[r]^L & E_\infty^G \ar@{-}[d] \\
E_\infty^{\sH'} \ar@{-}[d] \ar[r] \ar[dddr]& E_\infty^{\sH'} \ar@{-}[d]\\
\dots \ar@{-}[d]& \dots \ar@{-}[d] \\
E_\infty^{\sH} \ar@{-}[d] \ar[r] \ar[dr] & E_\infty^{\sH} \ar@{-}[d]\\
E_\infty \ar[r] & E_\infty.}
\end{align*}

Here $E_\infty$ means $E_\infty^\sT$ for the trivial family $\sT$. Any localization that is monoidal with respect to $\sH$ will land in $E_\infty^\sH$-algebras.
\end{example}

We have provided a theorem guaranteeing preservation of $E_\infty^G$-structure, and have given an example characterizing when such structure can be reduced to $E_\infty^\sH$-structure. We finish the paper with results guaranteeing preservation, even of $N_\infty$-structure.

In \cite[Corollary 6.7]{white-localization} (together with \cite[Theorem 5.6]{batanin-white}), localizations that preserve commutative monoid structure are characterized. We recall the version for $G$-spectra \cite[Theorem 7.9]{white-localization} as Theorem \ref{thm:preserving-N-infty} below. An alternative approach to preservation of commutative monoid structure (or preservation of algebras over linear isometries $G$-operads) may be found in \cite[Section 6]{hill-hopkins2}. Let $\Sym(-)$ denote the free commutative monoid functor. As all of our results from now on require the existence of transferred (semi-)model structures on categories of algebras, we assume $G$ is finite for the rest of the section.

\begin{theorem} \label{thm:preserving-N-infty}
Consider the positive complete (or positive) stable model structure on $G$-spectra. Suppose $L_C$ is a monoidal left Bousfield localization. Then the following are equivalent:
\begin{enumerate}
\item $L_C$ preserves commutative equivariant ring spectra,
\item $\Sym^n(-)$ preserves local equivalences between cofibrant objects, for all $n$,
\item $\Sym^n(-)$ takes maps in $C$ to local equivalences, and 
\item $\Sym^n(-)$ preserves $C$-acyclicity for all $n$.
\end{enumerate}
\end{theorem}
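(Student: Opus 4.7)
The plan is to prove the cycle $(1) \Rightarrow (2) \Rightarrow (3) \Rightarrow (4) \Rightarrow (1)$, adapting to the equivariant setting the preservation machinery developed in \cite{white-commutative-monoids}, \cite{white-localization}, and \cite{batanin-white}. All ingredients needed from the non-equivariant theory---pushout product, unit axiom, and in particular the generalized commutative monoid axiom together with the rectification axiom---have been verified for the positive and positive complete model structures on $\Sp^\sH$ in Section \ref{sec:admissibility} (see Proposition \ref{prop:rect-axiom-for-positive} and Lemma \ref{lemma:rectification-property-family}). By the characterization in Theorem \ref{thm:monoidal-loc-G-spectra}, the localized category $L_C(\Sp^\sH)$ remains a monoidal model category satisfying the same pushout product axiom, so the generalized commutative monoid axiom continues to hold with respect to trivial cofibrations of $\Sp^\sH$.

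First I would handle $(2) \Rightarrow (3)$, which is immediate: the maps in $C$ are by hypothesis cofibrations between cofibrant objects, and are local equivalences in $L_C(\Sp^\sH)$, so (2) applies directly. For $(3) \Rightarrow (4)$, I would write any $C$-acyclic cofibration $f$ as a retract of a transfinite composition of pushouts of maps in $C \cup J$, where $J$ is the set of generating trivial cofibrations of $\Sp^\sH$. Applying $\Sym^n$ to such a pushout produces a Harper-type filtration whose successive quotients are built from pushout products $f^{\boxprod k}/\Sigma_k$ of the attaching data. The generalized commutative monoid axiom ensures these layers are trivial $h$-cofibrations when $f \in J$, and hypothesis (3) together with the pushout product axiom ensures they are $C$-local equivalences of $h$-cofibrations when $f \in C$. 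Since $h$-cofibrant local equivalences are closed under pushout and transfinite composition \cite[Theorem 8.12]{MMSS}, (4) follows.

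For $(4) \Rightarrow (1)$, the plan is to transfer the model structure $L_C(\Sp^\sH)$ to commutative monoids via Theorem \ref{thm:transfer}, with $F = \Sym$ and $U$ the forgetful functor. Condition (i) was already verified in Section \ref{sec:admissibility} for the unlocalized structure, and it persists after localization because the generating cofibrations are unchanged. Condition (ii) reduces, via the path-object argument, to showing that generating $C$-acyclic cofibrations $j$ of $L_C(\Sp^\sH)$ satisfy that $\Sym(j)$ is a local equivalence; this is precisely hypothesis (4) applied levelwise. The identity functor then yields a Quillen adjunction $\Alg_{\Com}(\Sp^\sH) \rightleftarrows \Alg_{\Com}(L_C(\Sp^\sH))$ that exhibits $L_C$ as preserving commutative equivariant ring structure.

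Finally, for $(1) \Rightarrow (2)$ I would argue by cellular induction on cofibrant objects. Given a local equivalence $X \to Y$ between cofibrant objects, the rectification identification $\Sym^n(X) \simeq (E_G\Sigma_n)_+ \wedge_{\Sigma_n} X^{\wedge n}$ of Lemma \ref{lemma:rectification-property-family}(iii), combined with the fact that $L_C$ preserves $\Com$ by (1), forces the induced map on symmetric powers to be a local equivalence. The main obstacle is the $(4) \Rightarrow (1)$ step: carefully verifying the Schwede--Shipley/Quillen path-object hypotheses for $L_C(\Sp^\sH)$, since the localized structure need not satisfy the monoid axiom with respect to the full class of local equivalences, but only with respect to those obtained from $C \cup J$ via the saturation procedure above. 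This is exactly where (4) enters, and where the positivity of the ambient model structure is essential in avoiding the failure of the commutative monoid axiom noted in \cite{kervaire}.
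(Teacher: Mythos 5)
First, a point of fact: the paper does not prove this theorem. It is explicitly recalled from \cite[Theorem 7.9]{white-localization} together with \cite[Theorem 5.6]{batanin-white} (see the sentence just before the statement), so there is no in-paper proof to compare against, and yours is a standalone reconstruction.

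The implication $(1) \Rightarrow (2)$ is a genuine gap in your reconstruction. Preservation in the sense of Definition \ref{defn:preservation-white} asserts the existence, for each cofibrant $\Com$-algebra $E$, of a local $\Com$-algebra $\tilde E$ lifting the localization map up to homotopy; it makes no direct assertion about what $\Sym^n$ does to an arbitrary $C$-local equivalence $X\to Y$ of cofibrant spectra. Your appeal to the identification $\Sym^n(X) \simeq (E_G\Sigma_n)_+ \wedge_{\Sigma_n} X^{\wedge n}$ shows only that these two functors agree up to underlying weak equivalence --- which is equally true before and after localization --- and does not convert preservation of $\Com$-algebras into preservation of $C$-local equivalences by the free functor. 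The cited ingredient \cite[Theorem 5.6]{batanin-white} supplies exactly this mechanism (roughly, a comparison of the Bousfield localization of $\Alg_{\Com}(\M)$ at $\Sym(C)$ with $\Alg_{\Com}(L_C\M)$ through the free-forgetful adjunction), and your proposal contains no substitute for it.

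Two smaller imprecisions. In $(3)\Rightarrow(4)$ you treat a $C$-acyclic cofibration as a retract of a relative $(C\cup J)$-cell complex, but the generating trivial cofibrations of $L_C(\Sp^G)$ are not $C\cup J$; Hirschhorn's set is a larger horn set built by boxing $C$ against generating cofibrations, so one must first check that hypothesis (3) propagates through the horn construction --- this is precisely where the monoidality of $L_C$ should enter, not just in the abstract. In $(4)\Rightarrow(1)$ you invoke Theorem \ref{thm:transfer}, but the paper's own admissibility argument (Proposition \ref{prop:family-admissible}) deliberately avoids the path-object transfer for commutative monoids in spectra, using instead the $h$-cofibration argument of \cite[Appendix B]{kervaire} via \cite[Proposition 5.13]{MMSS}, and then Theorem \ref{thm:localization-preserves} to pass from ``a transferred model structure exists on $\Alg_{\Com}(L_C(\Sp^G))$'' to ``$L_C$ preserves $\Com$-algebras.'' Your opening mention of the generalized commutative monoid axiom is the right instinct, but you revert to Theorem \ref{thm:transfer} at the decisive step.
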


The following corollary gives easy to check conditions under which a left Bousfield localization will preserve algebras over $N_\infty$-operads that satisfy rectification with respect to $\Com$. The assumption on $P$ is only so that we can use \cite[Theorem A.6]{blumberg-hill} in order to rectify. 

\begin{corollary} \label{cor:loc-preserves-N-infty}
Let $P$ be a complete $N_\infty$-operad whose spaces have the homotopy type of $G\times \Sigma_n$ CW-complexes, and with a non-degenerate $G$-fixed basepoint. Let $C$ be a set of maps of $G$-spectra (with the positive or positive complete model structure). Then a monoidal localization $L_C$ preserves $P$-algebras if and only if $\Sym^n(C)$ is contained in the $C$-local equivalences for all $n$. Furthermore, such localizations preserve $P$-algebra structure for general $N_\infty$-operads, and preserve $P$-algebras for $\sH$-$N_\infty$-operads $P$, with respect to the $\sH$-model structure $\Sp^\sH$ on $\Sp^G$.
\end{corollary}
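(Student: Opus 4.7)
The plan is to reduce the ``if and only if'' statement to the already-proven Theorem~\ref{thm:preserving-N-infty} by using the rectification of Theorem~\ref{thm:family-rectification}, and then to extend the conclusion to non-complete $N_\infty$-operads by a direct cellular analysis. First, for a complete $N_\infty$-operad $P$, Theorem~\ref{thm:family-rectification} yields a Quillen equivalence $f_!:\Alg_P(\Sp^G)\leftrightarrows \Alg_\Com(\Sp^G):f^{*}$. I would argue that, since weak equivalences and fibrations on both sides are created by the forgetful functor to $\Sp^G$ and $L_C$ is a monoidal Bousfield localization, the same adjunction remains a Quillen equivalence after transferring to the localized categories (the admissibility and rectification results of Lemma~\ref{lemma:rectification-property-family} transfer to $L_C(\Sp^G)$ because the class of trivial $h$-cofibrations is enlarged by localization but the same arguments apply). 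Under this Quillen equivalence, preservation of $P$-algebras in the sense of Definition~\ref{defn:preservation-white} corresponds exactly to preservation of commutative equivariant ring spectra. The ``if and only if'' then follows directly from Theorem~\ref{thm:preserving-N-infty}.

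Next, for the ``furthermore'' clause about an arbitrary $N_\infty$-operad $Q$, I would not attempt to rectify $Q$ to $\Com$ (that would generically fail). Instead, assuming $\Sym^n(C)$ consists of $C$-local equivalences for all $n$, I would show directly that $L_C$ preserves $Q$-algebras by the pushout-filtration technique from \cite[Appendix A]{blumberg-hill} recalled in Proposition~\ref{prop:family-admissible}. For a cofibrant $Q$-algebra $E$ and a pushout attaching a cell $K\to L$, the resulting map $E\to E\cup_{Q(K)}Q(L)$ admits a filtration whose associated graded pieces are wedge summands of $Q(n)_+\wedge_{\Sigma_n}(E^{\wedge(n-k)}\wedge L^{\wedge k})$. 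Because $Q(n)$ is a $\Sigma_n$-free $(G\times\Sigma_n)$-CW complex, the analogue of \cite[Lemma~B.114]{kervaire} writes it as a homotopy colimit of orbits $(G\times\Sigma_n)/\Gamma_+$ for graph subgroups $\Gamma$, and smashing with $X^{\wedge n}$ over $\Sigma_n$ turns each orbit into a smash of norms of restrictions of $X$. Combining this with the monoidality of $L_C$ and the $\Sym^n$-condition (which controls precisely the $\Sigma_n$-quotients that appear), one shows inductively that each filtration layer is sent to a local equivalence. The $\sH$-$N_\infty$ case is identical, replacing $\Sp^G$ by $\Sp^\sH$ and invoking Lemma~\ref{lemma:rectification-property-family} and Proposition~\ref{prop:H-N-infty-rect} in place of their full-equivariant counterparts.

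Finally, for the forward implication, I would note that a complete $N_\infty$-operad $P$ acts on every commutative equivariant ring spectrum, so any localization preserving $P$-algebras must in particular preserve commutative monoids up to weak equivalence, which by Theorem~\ref{thm:preserving-N-infty} forces $\Sym^n(C)$ to be contained in the $C$-local equivalences.

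The main obstacle I expect is verifying that the Quillen equivalence of Theorem~\ref{thm:family-rectification} genuinely descends to the localized model structures with all the bookkeeping of Definition~\ref{defn:preservation-white}: one must confirm that the transferred model structures on $\Alg_P(L_C\Sp^G)$ and on $\Alg_\Com(L_C\Sp^G)$ exist (using that the generalized commutative monoid axiom is inherited by monoidal Bousfield localizations whenever the $\Sym^n$-condition holds, since maps of the form $f^{\boxprod n}/\Sigma_n$ for trivial cofibrations $f$ remain in the local trivial $h$-cofibrations), and that the unit map of the adjunction remains a weak equivalence in the localized sense. Once this transfer step is secured, the rest of the proof is a reassembly of results already proven in the paper.
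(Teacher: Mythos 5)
Your proof is correct and follows essentially the same route as the paper: both reduce the iff statement to Theorem~\ref{thm:preserving-N-infty} via the rectification of Theorem~\ref{thm:family-rectification}, and both handle the ``furthermore'' part by the norm-functor cellular argument. The paper dispatches the first claim with the terse observation that the rectification axiom survives Bousfield localization (cofibrant objects unchanged, weak equivalences enlarged), and defers the ``furthermore'' to a forward-referenced Corollary~\ref{cor:best-preservation}, which the paper proves immediately after; your proposal simply re-derives that corollary from scratch, which is a sound choice when working blind. Two spots deserve tightening. First, the parenthetical claim that Lemma~\ref{lemma:rectification-property-family} transfers to $L_C(\Sp^G)$ ``because the class of trivial $h$-cofibrations is enlarged by localization but the same arguments apply'' is too quick: part~(iii) (the rectification axiom) does transfer for free, exactly as the paper observes, but parts~(i) and~(ii) do \emph{not} transfer automatically, since they assert preservation of a class that has gotten larger. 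You recognize this correctly in your final paragraph where you make the transfer contingent on the $\Sym^n$-condition, so the logic is salvageable, but the earlier parenthetical is misleading as worded. Second, your forward implication (``a complete $N_\infty$-operad $P$ acts on every commutative ring spectrum, so preservation of $P$-algebras implies preservation of commutative monoids up to weak equivalence'') elides a step: if $E$ is a commutative monoid, then $L_C(E)$ is weakly equivalent to some $P$-algebra $\tilde E$, but $\tilde E$ need not be a commutative monoid; one must invoke Theorem~\ref{thm:family-rectification} once more to replace $\tilde E$ by a weakly equivalent commutative monoid before Theorem~\ref{thm:preserving-N-infty} can be applied. The paper absorbs this into its one-line ``Thus, localization preserves commutative monoids if and only if\dots'' Overall these are presentational imprecisions rather than gaps; the substance of your argument matches the paper's.
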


\begin{proof}
Observe that the rectification axiom of Section \ref{sec:admissibility} is unchanged by left Bousfield localization. If it holds in $\M$ then it holds in $L_C(\M)$, since cofibrant objects are the same, and every weak equivalence in $\M$ is a weak equivalence in $L_C(\M)$. Thus, localization preserves commutative monoids if and only if localization preserves algebras over a complete $N_\infty$-operad. The first statement of the corollary now follows from Theorem \ref{thm:preserving-N-infty}. 

The ``furthermore'' part follows from Corollary \ref{cor:best-preservation} below. The point is that the condition on $\Sym$ in Corollary \ref{cor:loc-preserves-N-infty} implies that $\Sym$ preserves $C$-local equivalences (by \cite[Corollary 6.7]{white-localization}). This preservation occurs if and only if all norm functors $N_H^G \res_H(-)$ preserve $C$-local equivalences (by Proposition \ref{prop:rect-axiom-for-positive}). In particular, all norms encoded by $P$ will preserve $C$-local equivalences, so Corollary \ref{cor:best-preservation} implies $L_C$ preserves $P$-algebras.

\end{proof}

In order to get preservation results for non-complete $N_\infty$-operads, we shift to phrasing our condition on $C$ in terms of the norm functors, since phrasing the condition in terms of $\Sym$ is the same as a requirement about {\em all} norm functors. The following is phrased in terms of $\sH$-$N_\infty$-operads, but already the case for $\sH = \{H < G\}$ is new (though a version where $P$ is a linear isometries $N_\infty$-operad has appeared in \cite[Section 6]{hill-hopkins2}). We use the notation $N^T(-)$ \cite[Definition 6.1]{blumberg-hill} for the norm parameterized by a $H$-set $T$. These norms can be written in terms of the norms $N_H^G \res_H(-)$ \cite[Proposition 6.2]{blumberg-hill}, but the notation becomes more cumbersome.

\begin{corollary} \label{cor:best-preservation}
Let $P$ be an $\sH$-$N_\infty$-operad whose spaces have the homotopy type of $G\times \Sigma_n$ CW-complexes. Let $L_C$ be a monoidal left Bousfield localization. Then $L_C$ preserves $P$-algebras in $\Sp^\sH$ if and only if, for all $H\in \sH$, the functors $G_+ \wedge_H N^T(-)$ preserve $C$-local equivalences between cofibrant objects, for all norms $N^T$ parameterized by $P$ (one for each homomorphism $H\to \Sigma_n$), i.e. for all admissible $H$-sets $T$. 
\end{corollary}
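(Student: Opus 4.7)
The plan is to prove both implications by reducing preservation of $P$-algebras by $L_C$ to a property of the free $P$-algebra functor $\mathbb{P}(X) = \bigvee_{n\ge 0} P(n)_+ \wedge_{\Sigma_n} X^{\wedge n}$, and then analyzing $\mathbb{P}$ via the cellular decomposition of $P(n)$ into orbits of the form $(G\times \Sigma_n)/\Gamma_\rho$ for graph subgroups $\Gamma_\rho \in \sN_n(P)$.

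The key computation is the Frobenius identification
$$
(G\times \Sigma_n)/\Gamma_\rho \wedge_{\Sigma_n} X^{\wedge n} \cong G_+ \wedge_H N^{T_\rho}(\res_H X),
$$
valid for $\rho\colon H \to \Sigma_n$, where $T_\rho$ is the $H$-set whose action is encoded by $\rho$. For the forward direction, assuming each parameterized norm $G_+ \wedge_H N^T(-)$ preserves $C$-local equivalences between cofibrant objects, a cellular induction on the $(G\times \Sigma_n)$-CW structure of $P(n)$ (using left properness of $L_C(\Sp^\sH)$ and the closure of $C$-local equivalences between cofibrant objects under pushouts, transfinite compositions, and wedges) shows that $\mathbb{P}$ preserves $C$-local equivalences between cofibrant objects. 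A standard cellular induction on a cofibrant $P$-algebra, in the spirit of Theorem~\ref{thm:localization-preserves} and \cite[Corollary 6.7]{white-localization}, then builds a $P$-algebra model $\tilde{E}$ for $L_C(E)$ satisfying Definition~\ref{defn:preservation-white}.

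For the reverse direction, assume $L_C$ preserves $P$-algebras. Preservation supplies a transferred model structure on $\Alg_P(L_C(\Sp^\sH))$ whose weak equivalences are the $C$-local equivalences of underlying $G$-spectra, so $\mathbb{P}$ is left Quillen into it. Hence if $f\colon X\to Y$ is a $C$-local equivalence between cofibrant $G$-spectra, then $\mathbb{P}(f)$ is a $C$-local equivalence. The wedge decomposition of $\mathbb{P}$ and the closure of $C$-local equivalences under retracts over wedges (valid in the stable setting) force each arity-$n$ piece $P(n)_+ \wedge_{\Sigma_n} f^{\wedge n}$ to be a $C$-local equivalence. The final step will isolate individual norms within each such arity: running a secondary induction up the isotropy-type strata of the CW filtration of $P(n)$, one peels off the $\Gamma_\rho$-cells in turn, using two-out-of-three on the pushout square of each cellular extension together with the Frobenius identification above to extract preservation of each $G_+ \wedge_H N^{T_\rho}$.

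The hard part will be this last isolation step. In the complete case handled by Corollary~\ref{cor:loc-preserves-N-infty}, rectification lets one replace $\mathbb{P}$ by $\Sym$, after which \cite[Corollary 6.7]{white-localization} (equivalently Theorem~\ref{thm:preserving-N-infty}) does the isolation automatically because $\Sym$ is a wedge over all norms. For an incomplete $\sH$-$N_\infty$-operad only the parameterized norms appear in $\mathbb{P}$, and the isolation must be performed by a direct stratum-by-stratum argument tracking the orbit types of $\sN_n(P)$; this technical step parallels \cite[Theorem~5.6]{batanin-white}, with equivariant cells indexed by graph subgroups $\Gamma_\rho$ playing the role of the non-equivariant symmetric structure.
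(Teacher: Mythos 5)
Your forward implication is essentially the paper's: both use the identification $G_+\wedge_H N^T(\res_H(-))\cong ((G\times\Sigma_n)/\Gamma)_+\wedge_{\Sigma_n}(-)^{\wedge n}$ together with a cellular induction on the $G\times\Sigma_n$-CW filtration of $P(n)$ to show that the free $P$-algebra functor preserves $C$-local equivalences between cofibrant objects, and then conclude preservation. The paper does this by re-running the proof of Proposition~\ref{prop:family-admissible} inside $L_C(\Sp^\sH)$ to obtain a transferred model structure on $\Alg_P(L_C(\Sp^\sH))$ and then invoking Theorem~\ref{thm:localization-preserves}; your direct cellular construction of a local algebra model $\tilde E$ accomplishes the same thing.

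Your converse, however, contains a gap. You assert that ``preservation supplies a transferred model structure on $\Alg_P(L_C(\Sp^\sH))$''; this reverses the actual implication. The existence of transferred model structures on $P$-algebras in both $\Sp^\sH$ and $L_C(\Sp^\sH)$ is what \emph{gives} preservation (Theorem~\ref{thm:localization-preserves}), whereas preservation in the sense of Definition~\ref{defn:preservation-white} is a pointwise statement about individual algebras and does not, on its own, yield a model structure on the category $\Alg_P(L_C(\Sp^\sH))$ with $C$-local equivalences as weak equivalences. The paper instead appeals to \cite[Theorem 5.6]{batanin-white}, which proves directly---without any intermediate model structure on the localized algebra category---that preservation of $P$-algebras forces the free $P$-algebra functor to preserve $C$-local equivalences between cofibrant objects. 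You should cite that result here rather than manufacture a model structure out of preservation.

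Beyond that, the ``stratum-by-stratum'' isolation you correctly flag as the hard part is not closed by the argument you sketch. In the pushout square obtained by smashing a cellular extension $P(n)_{k-1}\hookrightarrow P(n)_k$ with $(-)^{\wedge n}$ and quotienting by $\Sigma_n$, two of the four maps are cofibrations, not weak equivalences, so two-out-of-three gives no information about the latching map; and the natural cellular induction runs upward from nothing (you would need to already know a single cell preserves to start) while the hypothesis you hold is only about the colimit $P(n)$. The paper passes from ``free $P$-algebra functor preserves'' to ``each parameterized norm $G_+\wedge_H N^T(-)$ preserves'' via the norm identification directly. If you want to elaborate that step, you need a mechanism that splits individual orbit summands off of $P(n)_+\wedge_{\Sigma_n}(-)^{\wedge n}$ rather than a cellular induction that builds $P(n)$ up, since the cells are attached, not retracted.
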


\begin{proof}
Assume that the functors $N^T(-)$ preserve $C$-local equivalences for all norms parameterized by $P$. To prove $L_C$ preserves $P$-algebras, we will use Theorem \ref{thm:localization-preserves}. First, $\Alg_P(\Sp^\sH)$ has a transferred model structure by Proposition \ref{prop:family-admissible}. Next, we can use the same proof in $L_C(\Sp^\sH)$ to  obtain a transferred model structure on $\Alg_P(L_C(\Sp^\sH))$. The free $P$-algebra maps $P(A)\to P(B)$ are still $h$-cofibrations. To prove they are $C$-local equivalences, we use our assumption on $C$, instead of Lemma \ref{lemma:rectification-property-family}. The norm functor is defined (\cite[Definition 6.1]{blumberg-hill}, \cite[Section 2.2.3]{kervaire}) so that $G_+ \wedge_H N^T (-) \cong ((G \times \Sigma_n)/\Gamma)_+ \wedge_{\Sigma_n} (-)^{\wedge n}$, where $\Gamma$ is the graph subgroup parameterizing the norm. Thus, our assumption, together with a standard cellular induction (writing $P(n)$ in terms of the cells $(G \times \Sigma_n)/\Gamma$), implies that $P(n)_+ \wedge_{\Sigma_n} (-)^{\wedge n}$ preserves $C$-local equivalences for all $n$. The proof of Proposition \ref{prop:family-admissible} now goes through in $L_C(\Sp^\sH)$, hence $L_C$ preserves $P$-algebras by Theorem \ref{thm:localization-preserves}.

For the converse, assume $L_C$ preserves $P$-algebras. Then \cite[Theorem 5.6]{batanin-white} proves the free $P$-algebra functor preserves $C$-local equivalences between cofibrant objects. The identification $G_+ \wedge_H N_H^G \res_H (-) \cong ((G \times \Sigma_n)/\Gamma)_+ \wedge_{\Sigma_n} (-)^{\wedge n}$ proves that every norm functor parameterized by $P$ preserves $C$-local equivalences between cofibrant objects.
\end{proof} 

Just as in Theorem \ref{thm:preserving-N-infty}, for monoidal localizations $L_C$, the functors $P(-)$ preserve $C$-equivalences if and only if they preserve $C$-acyclicity.

\begin{remark}
If the reader wishes to weaken the assumption in Corollary \ref{cor:best-preservation}, so that one only needs to check that the norm functors take $C$ into the class of $C$-local equivalences (rather than requiring the norm functors to preserve the class of $C$-local equivalences), then the model of \cite[Corollary 6.7]{white-localization} can be followed. For brevity's sake, we have not pursued that approach here.
\end{remark}

We finish with an example that generalizes Hill's example, and shows that it is possible to destroy some, but not all, equivariant norms. From this point of view, Hill's example is maximally bad, because of its use of the family $\sP$ of proper subgroups of $G$.

\begin{example} \label{ex:generalized-hill}
Consider the $\sH$-generalization of Hill's example, where we replace $\sP$ everywhere by a general family $\sH$ of subgroups of $G$. Denote the resulting localization $L_\sH$ and define $E$ in the analogous way to Example \ref{example:hill} (i.e., $E = L_\sH(S^0)$). Just as in Example~\ref{example:hill}, $L_\sH$ is a monoidal Bousfield localization (relative to the $\sH$-model structure on $\Sp^G$), hence cannot reduce structure to below $E_\infty^\sH$. However, even more is true. Now some, but not all, of the spectra $\res_H(E)$ will be contractible, so $E$ can admit some, but not all, norms.
Indeed, given any containment $\sG \supset \sH$ of families of subgroups of $G$, such a localization can be arranged to reduce $\sG$-$N_\infty$ algebra structure to $\sH$-$N_\infty$ algebra structure.

\end{example}

Between Corollary \ref{cor:loc-pres-Sigma-cof}, Corollary \ref{cor:best-preservation}, Example \ref{example:lattice-postnikov}, and Example \ref{ex:generalized-hill} we now have results characterizing when localization preserves $E_\infty^\sF$-algebra structure (both multiplicative norm structure and classical $E_\infty$ structure), and we have examples allowing any move between algebraic structures in the poset of $\sH$-$N_\infty$-operads.

\appendix

\section{} \label{appendix}

In this Appendix, we prove that the positive complete stable model structure on $G$-spectra is left proper and cellular, conditions sufficient for left Bousfield localizations to exist. Throughout, $G$ is a compact Lie group. Where results from \cite{kervaire} are used (which assumes $G$ is a finite group), we have checked that these results remain true in the setting of a compact Lie group $G$. The proof technique of Proposition \ref{prop:G-spectra-cellular} demonstrates that the non-positive and non-complete variants, as well as the variants where the model structure is defined relative to a family of subgroups $\sH$ of $G$, are also left proper and cellular. %The authors were unable to find these results about cellularity proved elsewhere in the literature. 
The proof here is based on \cite[Theorem A.9]{hovey-spectra}, and uses the description of $G$-spectra used in this paper, and discussed in \cite{hovey-white} (rather than the description used in \cite{mandell-may-equivariant}).

\begin{proposition}\label{prop:G-spectra-cellular}
The positive complete stable model structure $\Sp^G$ on $G$-spectra is left proper and cellular.
\end{proposition}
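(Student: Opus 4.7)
The plan is to follow the template of \cite[Theorem A.9]{hovey-spectra}, adapting it to the category of orthogonal $G$-spectra with the positive complete family. The argument proceeds in two phases: first establish the result for the corresponding \emph{level} model structure, and then transfer along a left Bousfield localization.

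First, I would introduce the positive complete level model structure $\Sp^G_{\text{lvl}}$, whose weak equivalences and fibrations are detected levelwise in $\Top^{G\times O(n)}$ with the family model structure appropriate to the positive complete condition. Its generating (trivial) cofibrations are the maps $F_V i$ (resp.\ $F_V j$) from the statement of the proposition in the excerpt. To verify Hirschhorn's three cellularity conditions, I would check: (i) compactness of the domains and codomains of the generating cofibrations, by observing that $(G/H\times S^{n-1})_{+}$ and $(G/H\times D^{n})_{+}$ are finite $(G\times O(n))$-CW complexes and that $F_V$, being a left adjoint, preserves compactness relative to the generating cells; (ii) smallness of the domains of $F_V j$ relative to $F_V I$-cell, which follows because in compactly generated weak Hausdorff spaces every object is small relative to closed inclusions (see \cite[Section~2.4]{hovey-book}), and levelwise cell attachments in $\Sp^G$ are built from closed inclusions; (iii) that cofibrations are effective monomorphisms, which reduces to the fact that relative $F_V I$-cell complexes are levelwise closed inclusions in compactly generated weak Hausdorff $(G\times O(n))$-spaces, and retracts of effective monomorphisms in that category remain effective monomorphisms.

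Left properness of $\Sp^G_{\text{lvl}}$ is straightforward: pushouts are formed levelwise, the family model structure on each $\Top^{G\times O(n)}$ is left proper (being topological with all objects fibrant in the unpointed case and cofibrant cofibrations defined from families by Proposition~\ref{prop:family-top}), and level weak equivalences and cofibrations are levelwise, so levelwise left properness transfers to $\Sp^G_{\text{lvl}}$.

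For the passage to the stable model structure, I would identify the positive complete stable model structure as the left Bousfield localization of $\Sp^G_{\text{lvl}}$ at the usual set $\lambda$ of stabilization maps $F_{V\oplus W}(S^{W}\wedge K) \to F_{V}(K)$, taken over positive $V$, representations $W$ in the complete universe, and $K$ a generating cell. These are maps between cofibrant objects in $\Sp^G_{\text{lvl}}$. Since left Bousfield localization preserves cellularity and left properness (\cite[Theorem 4.1.1]{hirschhorn}, \cite[Proposition 3.4.4]{hirschhorn}), the conclusion follows.

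The main obstacle I expect is the verification of the effective monomorphism condition in (iii): one must check carefully that the free functor $F_V$ and the symmetric-sequence/module-over-$S$ structure do not destroy the closed-inclusion property, and one must handle the fact that the positive complete family involves arbitrary closed subgroups of each $G\times O(n)$ (requiring the Illman collection observation from Section~\ref{sec:prelim}). A secondary care point is that the localizing set $\lambda$ must consist of honest cofibrations between cofibrant objects in $\Sp^G_{\text{lvl}}$ in the positive \emph{complete} sense; this requires checking that the shift desuspension functors $F_V$ for positive $V$ land in cofibrant objects of the complete level structure, which follows from the definition of the generating cofibrations in $\Sp^\sH$.
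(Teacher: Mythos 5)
Your proposal takes a genuinely different route from the paper's. You organize the argument as: establish left properness and cellularity for a positive complete \emph{level} model structure, identify the stable structure as a left Bousfield localization of the level one at a set of stabilization maps, and then invoke Hirschhorn's preservation theorems. The paper instead verifies the three cellularity conditions of \cite[Chapter 12]{hirschhorn} directly for the stable structure (following \cite[Proposition A.8]{hovey-spectra}), and proves left properness by chaining three facts from \cite{kervaire}: cofibrations in $\Sp^G$ are $h$-cofibrations \cite[Remark B.64]{kervaire}, $h$-cofibrations are flat \cite[Corollary B.21]{kervaire}, and a model category is left proper if and only if cofibrations are flat \cite[Remark B.10]{kervaire}. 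What your route buys is the ability to outsource more to Hirschhorn's general machinery; what it costs is the extra obligation to prove that the positive complete stable model structure of \cite[Definition B.4.1]{kervaire}, which is \emph{defined} via stable equivalences and not as a Bousfield localization, actually coincides with the localization $L_\lambda(\Sp^G_{\mathrm{lvl}})$. That identification is a theorem (in the spirit of \cite[Theorem 8.11]{MMSS} or Hovey's sequential-spectra version), not a definition, and your sketch asserts it rather than arguing it; this is the main step you would need to supply to make the route rigorous.

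Two further issues in your level-structure argument. First, the parenthetical justification for left properness of the family model structures on $\Top^{G\times O(n)}$ --- ``being topological with all objects fibrant'' --- is the wrong condition: fibrancy of every object gives \emph{right} properness, not left. Left properness here instead follows from the fact that cofibrations are closed $T_1$-inclusions (or, as the paper does at the spectrum level, from flatness of $h$-cofibrations). Second, cofibrations of $\Sp^G_{\mathrm{lvl}}$ are not levelwise cofibrations of $\Top^{G\times O(n)}$ on the nose; they are levelwise $h$-cofibrations (closed inclusions with the homotopy extension property), which is what actually transfers levelwise left properness, since pushouts along $h$-cofibrations preserve weak equivalences. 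Your phrasing ``cofibrations are levelwise'' should be tightened to this. With these repairs and the localization identification in hand, your strategy would go through and would reprove the proposition; as written it has real gaps at exactly the places you flagged as ``care points.''
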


\begin{proof}
That it is left proper can be deduced from the combination of three results in \cite{kervaire}. First, in \cite[Remark B.64]{kervaire} it is proven that cofibrations are $h$-cofibrations, i.e., have the Homotopy Extension Property. In \cite[Corollary B.21]{kervaire} it is proven that $h$-cofibrations are flat maps (i.e., maps $f$ such that cobase change along $f$ preserves weak equivalences \cite[Definition B.9]{kervaire}). Finally, in \cite[Remark B.10]{kervaire}, it is observed that a model category is left proper if and only if every cofibration is a flat map. Putting these together we see that $\Sp^G$ is left proper.

We turn now to cellularity. We must prove that $\Sp^G$ satisfies the following three properties \cite[Chapter 12]{hirschhorn}, relative to the 
generating cofibrations $I = \{G_+ \wedge_H S^{-V} \wedge S^{n-1}_+ \to G_+ \wedge_H S^{-V} \wedge D^n_+ \}$ of the positive complete model structure (\cite[Definition B.4.1]{kervaire}) and the generating acyclic cofibrations $J$ defined analogously.
The three properties are as follows:
\begin{enumerate}
\item The domains and codomains of $I$ are compact relative to $I$.
\item The domains of $J$ are small relative to the cofibrations.
\item Cofibrations are effective monomorphisms.
\end{enumerate}

The easiest to verify is (2). We have just seen that the cofibrations are $h$-cofibrations, and \cite[Lemma A.70]{kervaire} shows that $h$-cofibrations are objectwise closed inclusions. A $\lambda$-sequence of $h$-cofibrations is again an $h$-cofibration  \cite[Proposition A.62]{kervaire}, and all spaces are small relative to inclusions \cite[Lemma 2.4.1]{hovey-book}. Let $X$ be a domain of a map in $J$, let $Y_\alpha$ be a $\lambda$-sequence of cofibrations in $\Sp^G$ for some regular cardinal $\lambda$, and let $f:X\to \colim Y_\alpha$. Each space $X_n$ is small relative to the closed inclusions making up the colimit, so $f_n$ factors through some map $g_n$ to an earlier stage $Y_{\beta_n}$. Take $\beta$ to be the supremum of the $\beta_n$ and the $g_n$ will assemble to a map $g:X\to Y_\beta$, verifying smallness.

Next we turn to (3). A map $f:X\to Y$ is an {\em effective monomorphism} if $f$ is the equalizer of the two obvious maps $Y \rightrightarrows Y\coprod_X Y$. In $\Top$ and $\Top^G$, condition (3) is equivalent to the statement that $X$ is the intersection of the two copies of $Y$ in $Y\coprod_X Y$. In particular, closed inclusions in (compactly generated weak Hausdorff) spaces are precisely the effective monomorphisms. Since limits in $\Sp^G$ are taken levelwise, it is sufficient to check condition (3) on each level $f_n$. Here again \cite[Lemma A.70]{kervaire} shows that $h$-cofibrations in $\Sp^G$ (hence cofibrations) are objectwise $h$-cofibrations in $\Top^G$, hence effective monomorphisms.

Finally, we turn to (1). The domains and codomains of maps in $I$ have the form $G_+ \wedge_H S^{-V} \wedge K_+$ where $K$ is either $S^{n-1}$ or $D^n$. Observe that $S^{-V} \wedge -$ is the left adjoint $F_V$ to a functor $Ev_V:\Sp^G \to \Top^H$, where $V$ is an $H$-representation. The functor $F_V$ takes an $H$-space $A$ to the $G$-space $A \wedge_H G_+ \cong A\wedge (G/H)_+$ and then to the spectrum $S^{-V} \wedge A\wedge (G/H)_+$. In the proof to follow, let $A=K_+$ denote a domain or codomain of a map in $I$, the set of generating cofibrations in $\Top^H$. Because everything is being converted into a $G$-space, it will not matter which $H$ our object begins with. Let $F_V(A)$ denote a domain or codomain of a generating cofibration in $\Sp^G$.

The notion of compactness in (1) is different from the one in \cite{hovey-book}. Here it means that there is some cardinal $\kappa$ such that for every relative $I$-cell complex $f:X\to Y$ and for every presentation of $f$ by a chosen collection of cells then every map $F_V(A)\to Y$ factors through a subcomplex of size at most $\kappa$. A presentation of $f$ is a realization of $f$ as the colimit of a $\lambda$-sequence of maps which are pushouts of coproducts of cells. A subcomplex of the given presentation of $f$ is a $\lambda$-sequence formed by pushouts of coproducts of a subset of cells. The size is the cardinality of the set of cells.

We will prove (1) by following the proof of \cite[Proposition A.8]{hovey-spectra}. This proof has three key ingredients. First, for all $H$ the standard fixed-point model structure on $\Top^H$ is cellular. This is because cells have the form $(H/K)_+ \wedge S^{n-1}_+ \to (H/K)_+ \wedge D^n_+$ and so the same proof which works for spaces applies just as well to this model category. Next, by adjunction every map $F_V(A)\to Y$ is equivalent to a map $A\to Ev_V(Y)$ and for every presentation $f:X\to Y$, we write $f_V:X_V\to Y_V$ as a retract of a presentation $X_V=Z^0 \to Z^1\to \dots \to Z^\lambda = Z$ of relative $I$-cell complexes \cite[Lemma 10.5.25]{hirschhorn}, where again $I$ is the set of generating cofibrations for $\Top^H$ and $V$ is an $H$-representation, in which every cell appearing in the presentation of $X_V\to Z$ has an associated $F_V(I)$-cell appearing in the presentation of $f$. This step allows us to reduce the verification of spectra to one in spaces, and all it requires is that every generating cofibration of spectra is an objectwise cofibration in spaces (this is clear, since every such map is a cofibrant object smashed with a cofibration of spaces).
% To avoid being sloppy, maybe say that I mean H-spaces where V is an H-rep??
% Worth pointing out Hovey cites the wrong thing in Hirschhorn?? Probably not necessary.
Lastly, a transfinite induction following precisely the same steps as in \cite[Proposition A.8]{hovey-spectra} verifies that for every presentation of $f$ by a $\lambda$-sequence, if $\kappa$ is a cardinal for which $\Top^H$ satisfies cellularity for all $H$ then every map $F_V(A)\to Y$ factors through a subcomplex with at most $\kappa$ many $F_V(I)$-cells.

This transfinite induction makes use of the fact that we have already verified (3), so a subcomplex is uniquely determined by its set of cells \cite[Proposition 12.2.1]{hirschhorn}. It also uses the observation that it suffices to work with a $\lambda$-sequence formed by transfinite compositions of pushouts of maps in $I$ rather than coproducts of such maps. Lastly, the main points of the transfinite induction are that every $F_V(I)$-cell is contained in a subcomplex of at most $\kappa$-many $F_V(I)$-cells by induction (since each cell is glued on at some stage $\alpha < \lambda$), the $\lambda$-sequence in $\Top^H$ has the property that $A\to Y_n \to Z$ factors through a subcomplex with at most $\kappa$-many $I$-cells, and the subcomplex of $Y$ formed by the $F_V(I)$-cells corresponding to the $I$-cells required for the factorization in spaces still uses fewer than $\kappa^2 = \kappa$ many $F_V(I)$-cells. The interested reader is referred to Hovey's original proof in \cite[Proposition A.8]{hovey-spectra} for more details.
\end{proof}

\begin{remark}
The same proof demonstrates that the positive non-complete model structure and the stable model structure of \cite[Theorem III.4.2 and Theorem III.5.3]{mandell-may-equivariant} are cellular (using the equivalence of this model structure with the $\cat{U}$-stable model structure of \cite[Section 5]{hovey-white}), as well as the family model structures $\Sp^\sH$ of Section \ref{sec:prelim}. %As far as the authors can tell, this observation has not appeared in the literature previously.
\end{remark}

\end{document}